\documentclass[reqno, 11pt]{amsart}
\usepackage{amsmath}
\usepackage{amsfonts}
\usepackage{amssymb}
\usepackage{amsthm}
\usepackage{hyperref}
\usepackage{lmodern}
\usepackage{xcolor}
\usepackage{tabularx}
\usepackage{longtable}
\usepackage{tikz}

\usepackage[text={170mm,240mm},centering]{geometry}
\geometry{a4paper}
\input diagxy
\input xy
\xyoption{all}

\linespread{1.2}

\newtheorem{thm}{Theorem}[section]

\newtheorem{lem}[thm]{Lemma}
\newtheorem{prop}[thm]{Proposition}
\theoremstyle{definition}
\newtheorem{defn}[thm]{Definition}
\newtheorem{example}[thm]{Example}
\newtheorem{rem}[thm]{Remark}
\newtheorem{conj}[thm]{Conjecture}
\newtheorem{claim}[thm]{Claim}

\numberwithin{equation}{section}

\begin{document}

\title[Classification of full exceptional collections of line bundles]{Classification of full exceptional collections of line bundles on three blow-ups of $\mathbb{P}^{3}$}

\author{Wanmin Liu, Song Yang, Xun Yu}

\address{Center for Geometry and Physics, Institute for Basic Science (IBS), Pohang 37673, Republic of Korea}%
\email{wanminliu@gmail.com}%

\address{Center for Applied Mathematics, Tianjin University, Tianjin 300072, P. R. China}%
\email{syangmath@tju.edu.cn, xunyu@tju.edu.cn}%

\begin{abstract} 
A fullness conjecture of Kuznetsov says that if a smooth projective variety $X$ admits a full exceptional collection of line bundles of length $l$, then any exceptional collection of line bundles of length $l$ is full.
In this paper, we show that this conjecture holds for $X$ as the blow-up of $\mathbb{P}^{3}$ at a point, a line, or a twisted cubic curve, i.e. any exceptional collection of line bundles of length 6 on $X$ is full. Moreover, we obtain an explicit classification of full exceptional collections of line bundles on such $X$.
\end{abstract}

\date{\today}

\keywords{Derived category of coherent sheaves, full exceptional collection, semiorthogonal decomposition}

\subjclass[2010]{14F05, 14J45, 18E30}

\maketitle


\section{Introduction}

Let $X$ be a smooth projective variety and we denote by $\mathrm{D}(X)$ the bounded derived category of coherent sheaves on $X$.
It is noteworthy that $\mathrm{D}(X)$ is one of the most important invariant of $X$.
For example,  $X$ is uniquely determined by $\mathrm{D}(X)$ if the canonical bundle of $X$ is ample or anti-ample (see \cite{BO01}).
To investigate varieties via their derived categories, 
Bondal et al. \cite{BK90, BO95} introduce
the notion of {\it semiorthogonal decomposition} which has become an important tool in algebraic geometry.
In particular, the notion of semiorthogonal decomposition includes {\it full exceptional collection} as a specially important example.
More general,
any exceptional collection $\{E_1, E_2, \ldots, E_l\}$ on $\mathrm{D}(X)$ gives a semiorthogonal decomposition for $\mathrm{D}(X)$ of the form
\begin{equation}\label{exceptional-sod}
\mathrm{D}(X)=
\langle 
\mathcal{A}_{X}, E_1, E_2, \ldots,E_l\rangle,
\end{equation}
where $\mathcal{A}_{X}$ is an admissible subcategory of $\mathrm{D}(X)$ (see \cite{Bon90}).
Then $\mathcal{A}_{X}$ is trivial if and only if the exceptional collection $\{E_1,E_2,\ldots,E_l\}$ is {\it full}.
Naturally, there is a crucial problem as follows (see \cite{BO95} or \cite[Question 1.9]{Kuz-ICM}):

\textit{
Find a good condition for an exceptional collection to be full, 
or find a good condition for $\mathcal{A}_{X}$ to be trivial.
}
 
Given an admissible subcategory $\mathcal{A}_{X}\subset \mathrm{D}(X)$, 
we say that $\mathcal{A}_{X}$ is \textit{quasi-phantom} if its Hochschild homology is trivial and its Grothendieck group is of finite rank; 
moreover, it is called \textit{phantom} if the Grothendieck group is trivial.
In \cite{Kuz09}, Kuznetsov proposes a nonvanishing conjecture which asserts that 
if $\mathcal{A}_{X}$ is a quasi-phantom or phantom category 
then $\mathcal{A}_{X}$ is trivial.
However, for some surfaces $X$ of general type with $q=p_g=0$,
$\mathrm{D}(X)$ admits a semiorthogonal decomposition consisting of an exceptional collection of line bundles and its orthogonal complement which is a quasi-phantom category or phantom category (\cite{GO13, AO13, BGvBS13, GS13, Keum17, BGvBKS15, GKMS15, Lee15, LS14, KKL17,Lee16, AB17} etc). 
Motivated by those examples, the existence problem of (quasi-)phantom category on smooth projective varieties has become a much more interesting topic.

Let us now suppose that $X$ admits a full exceptional collection of length $l$.
Since the Grothendieck group is preserved under semiorthogonal decompositions,
as a consequence,  $\mathcal{A}_{X}$ in \eqref{exceptional-sod} is a phantom category.
Although Kuznetsov's nonvanishing conjecture is not true in general, 
there is still an interesting fullness conjecture attributed to Kuznetsov:
{\it any exceptional collection of length $l$ on $X$ is full} (see \cite[Conjecture 1.10]{Kuz-ICM}).
In dimension $1$, Kuznetsov's fullness conjecture is trivially true;
in dimension $2$,
a result of Kuleshov-Orlov \cite{KO94}
asserts that any exceptional collection on a del Pezzo surface is contained in a full exceptional collection and hence Kuznetsov's fullness conjecture holds for del Pezzo surfaces.

To our best knowledge, there is no common method which can be used to address Kuznetsov's fullness conjecture.
More specifically, by considering a smooth projective variety which already admits a full exceptional collection of line bundles, there is a weak version of Kuznetsov's fullness conjecture.

\begin{conj}[Kuznetsov \cite{Kuz-ICM}]\label{Kuz-icm-conj-linebundles}
If a smooth projective variety $X$ admits a full exceptional collection of line bundles of length $l$, then any exceptional collection of line bundles of length $l$ is full.
\end{conj}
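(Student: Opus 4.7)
The plan is to address the classification and fullness problem uniformly across the three cases. In each, $\mathrm{Pic}(X) = \mathbb{Z}\langle H, E\rangle$ with $H = \pi^{*}\mathcal{O}_{\mathbb{P}^{3}}(1)$ and $E$ the exceptional divisor, and $\operatorname{rk} K_{0}(X) = 6$. Since Orlov's blow-up formula already produces a full exceptional collection of line bundles of length $6$ on each such $X$, the content of Conjecture \ref{Kuz-icm-conj-linebundles} here is that no non-full exceptional collection of line bundles of length $6$ exists, together with the desired explicit enumeration.

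The first step is to compute $H^{*}(X, \mathcal{O}_{X}(aH+bE))$ for all $(a,b) \in \mathbb{Z}^{2}$, via the projection formula for $R\pi_{*}\mathcal{O}_{X}(bE)$ and the Leray spectral sequence. This is elementary since the exceptional divisor $E$ is $\mathbb{P}^{2}$ in the point case and $\mathbb{P}^{1}\times\mathbb{P}^{1}$ in the line and twisted-cubic cases (the normal bundle of a twisted cubic in $\mathbb{P}^{3}$ being $\mathcal{O}(5)^{\oplus 2}$). The output is the \emph{acyclic set} $A := \{(a,b) : H^{*}(X, \mathcal{O}_{X}(aH+bE)) = 0\}$, which should decompose into two roughly quadrant-shaped regions bounded away from the origin.

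Next, I would classify the exceptional collections $(L_{1}, \ldots, L_{6})$ of line bundles. After normalizing $L_{1} = \mathcal{O}_{X}$ by an overall twist, such a collection is specified by five pairs $(a_{i}, b_{i}) \in \mathbb{Z}^{2}$ whose pairwise decreasing differences all lie in $A$. I would use convex-geometric bounds on $A$ together with pigeonhole-type arguments in the $a$- and $b$-coordinates to exhaust such tuples, reducing to a finite list of canonical forms modulo the symmetries coming from overall twists, reversal (via Serre duality), and dualization.

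The third and hardest step is to verify fullness for each canonical form. The strategy is to show that a reference Orlov-type full collection lies in $\langle L_{1}, \ldots, L_{6}\rangle$, using the exact triangle
\[
\mathcal{O}_{X}(D-E) \longrightarrow \mathcal{O}_{X}(D) \longrightarrow j_{*}\mathcal{O}_{E}(D|_{E})
\]
to shift the $E$-coordinate by one, together with the pulled-back Beilinson/Koszul resolution on $\mathbb{P}^{3}$ to shift the $H$-coordinate. The main obstacle is that each canonical form may require a bespoke composition of such sequences; I would therefore aim to formulate a uniform ``resolution lemma''---for instance, that any $\mathcal{O}_{X}(aH+bE)$ with $b \neq 0$ is an iterated cone of line bundles with strictly smaller $|b|$---to enable an induction and avoid a large case analysis. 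The twisted-cubic case is likely the most delicate, since the more negative intersection numbers on $E$ produce longer resolutions and a potentially more intricate acyclic set $A$.
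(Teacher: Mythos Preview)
Your overall three-stage plan (compute the acyclic locus, classify length-$6$ collections up to normalization, then verify fullness) matches the paper's architecture, but several of your concrete predictions diverge from what actually happens, and one of them is a genuine obstruction.

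First, the shape of the acyclic set $A$ is not ``two roughly quadrant-shaped regions.'' In all three cases the paper finds that $A$ consists of one or two affine lines in $\mathbb{Z}^{2}$ (such as $a+b=-1$, or $a+2b=-1$) together with a handful of isolated points near the origin; in the twisted-cubic case there is in addition a residual quadric locus $f(a,b)=0$ whose acyclicity the paper never fully resolves. The paper obtains this not via Leray but by combining nef-intersection bounds for $H^{0}$ and $H^{3}$ with a lemma that $h^{1}\cdot h^{2}=0$ (in the point and line cases), reducing to $\chi=0$ by Riemann--Roch. This matters for the classification step: because $A$ contains an entire line, the resulting canonical forms include \emph{infinite families} of normalized collections indexed by an integer parameter, not a finite list. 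The paper handles these by building explicit tables of exceptional pairs among a small set of ``basic'' line bundles $B_{0},\ldots,B_{k}$ (where $B_{0}$ is the infinite family) rather than by convex/pigeonhole arguments.

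Second, and more seriously, your proposed fullness mechanism---iteratively lowering $|b|$ via $\mathcal{O}_{X}(D-E)\to\mathcal{O}_{X}(D)\to j_{*}\mathcal{O}_{E}(D|_{E})$---cannot work uniformly on these infinite families. For example, in the point case one canonical form is
\[
\{\mathcal{O}_{X},\ \mathcal{O}_{X}(H-E),\ \mathcal{O}_{X}(2H-2E),\ \mathcal{O}_{X}(aH-(a-1)E),\ \mathcal{O}_{X}((a+1)H-aE),\ \mathcal{O}_{X}((a+2)H-(a+1)E)\}
\]
for arbitrary $a\in\mathbb{Z}$: the $E$-coefficients are unbounded, so no finite chain of such triangles will reach the Orlov collection $\{\mathcal{O}_{X}(kH)\}$. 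The paper instead exploits that $X$ is simultaneously a $\mathbb{P}^{1}$- or $\mathbb{P}^{2}$-bundle (over $\mathbb{P}^{2}$ or $\mathbb{P}^{1}$), and obtains these families directly from Orlov's projective bundle formula with the relative $\mathcal{O}_{X}(1)$ depending on the integer parameter. For the remaining canonical forms the paper uses two further tools you do not mention: a higher-dimensional ``augmentation'' theorem for blow-ups at a point (generalizing Hille--Perling), and a lemma to the effect that two exceptional collections of the same length differing in a single object are simultaneously full or not, which lets them propagate fullness along the infinite families and handle the delicate twisted-cubic cases by comparison with a known full collection. Your resolution-lemma idea is closer in spirit to the augmentation step, but it does not substitute for the projective-bundle and one-object-swap arguments that carry most of the weight.
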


This conjecture is true for $\mathbb{P}^n$ (cf. \cite{Bei78}), del Pezzo surfaces (cf. \cite{KO94}), Hirzebruch surfaces (cf. \cite{H04}) and smooth projective toric surface $X$ of Picard rank $3$ or $4$ (cf. \cite{HI13}).
To the best of our knowledge, there are no more cases supporting this conjecture.

In this paper, we show that the Conjecture \ref{Kuz-icm-conj-linebundles} holds for $X$. To be more precisely, the main result of this paper is stated as follows.

\begin{thm}[Theorem \ref{main-thm-point}+\ref{main-thm-line}+\ref{main-thm-cubic}]\label{main-thm}
Let $X$ be the blow-up of $\mathbb{P}^{3}$ at a point, or a line, or a twisted cubic curve.
Then any exceptional collection of line bundles of length $6$ on $X$ is full. Moreover, we obtain an explicit classification \footnote{ See Theorem \ref{Classify-Blowup-p-P3}, \ref{Classify-Blowup-line-P3}, \ref{Classify-Blowup-twisted-cubic-P3}. }
of full exceptional collection of line bundles on such $X$.
\end{thm}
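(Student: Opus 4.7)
The plan is to convert the statement into a finite combinatorial problem on the Picard lattice. Since $X$ is rational, $H^{>0}(X,\mathcal{O}_X) = 0$ and every line bundle is automatically exceptional, so exceptionality is purely a pairwise condition. Write $\mathrm{Pic}(X) \cong \mathbb{Z} H \oplus \mathbb{Z} E$, where $H = \pi^{*}\mathcal{O}_{\mathbb{P}^{3}}(1)$ and $E$ is the exceptional divisor, and set $\mathcal{L}_{a,b} := \mathcal{O}_{X}(aH+bE)$. The first step is to compute, for each center $Z \in \{\text{point, line, twisted cubic}\}$, the full cohomology table $h^{i}(X, \mathcal{L}_{a,b})$ via the projection formula $R\pi_{*}\mathcal{L}_{a,b} = \mathcal{O}_{\mathbb{P}^{3}}(a) \otimes R\pi_{*}\mathcal{O}_{X}(bE)$, combined with the explicit structure of $R\pi_{*}\mathcal{O}_{X}(bE)$ (powers of $\mathcal{I}_{Z}$ for $b\leq 0$, and sheaves supported on $Z$ built from its normal bundle for $b>0$). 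This yields the \emph{vanishing region} $\mathcal{E}_{Z} \subset \mathbb{Z}^{2}$ consisting of those $(a,b)$ for which $H^{\bullet}(X, \mathcal{L}_{a,b}) = 0$.

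The second step is the enumeration. Twisting by a fixed line bundle preserves both exceptionality and fullness, so I would normalise any length-$6$ exceptional collection to have first entry $\mathcal{O}_{X}$, reducing to finding all ordered tuples $((a_{i},b_{i}))_{i=1}^{6}$ with $(a_{1},b_{1})=(0,0)$ and $(a_{i}-a_{j}, b_{i}-b_{j}) \in \mathcal{E}_{Z}$ for all $i<j$. Because $\mathcal{E}_{Z}$ is (roughly) a union of translated cones determined by Kodaira-type vanishing on $\mathbb{P}^{3}$ combined with Serre duality on $X$, these $\binom{6}{2}=15$ difference constraints pin each $(a_{i},b_{i})$ inside a compact window, and a finite case analysis then lists all admissible tuples.

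The third step is to establish fullness of every enumerated collection. Using Orlov's blow-up formula together with the Beilinson collection on $\mathbb{P}^{3}$ and Beilinson-type collections on $Z$ (respectively $E$), I would write down one standard full exceptional collection of line bundles $\mathbf{F}_{0}$ on $X$. For each other admissible tuple produced in Step 2, the strategy is to exhibit an explicit chain of mutations, preferably mutations that remain inside the class of line-bundle collections, transforming it into $\mathbf{F}_{0}$. Since mutations preserve fullness and the rank of $K_{0}$, this simultaneously produces the explicit classification and confirms Kuznetsov's Conjecture \ref{Kuz-icm-conj-linebundles} for these $X$.

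The hard part will be the twisted-cubic case. There the ideal $\mathcal{I}_{Z}$ is generated in degree $2$ rather than $1$, so $R\pi_{*}\mathcal{O}_{X}(bE)$ and the resulting region $\mathcal{E}_{Z}$ are less tidy than for a point or a line; the enumeration in Step 2 expands accordingly, and in Step 3 one must verify that the necessary mutations can indeed be realised within line bundles (rather than producing more general exceptional objects). A secondary technical point is bookkeeping of the cases so that the classification is stated in a uniform, reproducible form across the three blow-up centers.
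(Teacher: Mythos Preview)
Your overall architecture---classify cohomologically zero line bundles, enumerate normalised length-$6$ collections, then prove fullness by relating them to a known full collection---is exactly the paper's strategy. But two of your three steps contain concrete errors that would block the argument as stated.

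First, the ``compact window'' claim in Step~2 is false. In each of the three cases the vanishing region $\mathcal{E}_Z$ contains an entire affine line in $\mathbb{Z}^2$ (e.g.\ $a+b=-1$ for the point and line, $a+2b=-1$ for the twisted cubic), not just a bounded patch. Consequently the enumeration does \emph{not} produce a finite list: for the point case one gets three one-parameter families plus six sporadic collections; for the line, two two-parameter families; for the twisted cubic, three one-parameter families plus twelve sporadics. The case analysis is finite only after identifying these families, and proving fullness for an infinite family requires an induction step---the paper uses a key lemma (if two exceptional collections of the same length differ in a single entry, one is full iff the other is) to slide along the integer parameter.

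Second, in Step~3 the collections do \emph{not} all lie in a single orbit under mutations and normalisations. For the blow-up of a point there are two orbits, for the twisted cubic there are three, and no chain of line-bundle-preserving mutations connects them. So ``exhibit a chain of mutations back to $\mathbf{F}_0$'' is impossible for the sporadic orbits. The paper handles each orbit separately: for the point case it introduces a higher-dimensional \emph{augmentation} construction (generalising Hille--Perling) to get one orbit, and the projective-bundle formula for the other; for the twisted cubic it uses the projective-bundle/blow-up formula for one orbit and ad hoc orthogonality arguments (again via the ``change one entry'' lemma) for the remaining two.

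A smaller point for the twisted cubic: the cohomology computation there does not fully close---there remain two unresolved regions where $\chi=0$ but one cannot decide directly whether $h^1=h^2=0$. The paper sidesteps this by checking (via an auxiliary Euler-characteristic system) that no length-$3$ exceptional sequence can be built entirely from those regions, so they never contribute to a length-$6$ collection.
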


\subsection*{Idea of Proof}
It is known that $X$ is either a $\mathbb{P}^1$-bundle over $\mathbb{P}^2$ (for the blow-up at a point or a twisted cubic curve) or $\mathbb{P}^2$-bundle over $\mathbb{P}^1$ (for the blow-up at a line). By Orlov's projective bundle formula (Theorem \ref{Orlov-projbundle-formula}), we already know a family of full exceptional collection of line bundles. However, it is not clear that any exceptional collection of line bundles could be obtained by this method. Instead, we first classify cohomologically zero line bundles. We then classify the exceptional collection of line bundles of length $6$ on $X$ up to mutations and normalizations. It turns out that in the case of blow-up a point or a twisted cubic curve, some exceptional collections of line bundles are not from the projective bundle formula. To show the fullness in the blow-up of a point case, we generalize Hille-Perling's construction of augmentation \cite{HP11} from surface case to higher dimensional case (Theorem~\ref{blowup-point-fullness-lemma}). The fullness in the blow-up of a twisted cubic case is proved by using a technical lemma that if there are two exceptional collections of the same length with only one exceptional object different, the one is full if and only if the other is full (Lemma~\ref{lem:exlb}).

The outline of this paper is organized as follows.
We devote Section \ref{preliminaries-sod} to some basic definitions and important results on semiorthogonal decompositions and full exceptional collections.
In Section \ref{Exceptional-line-bundles}, 
we give a rapid review of exceptional collection of line bundles on smooth projective varieties and mainly give a high dimensional augmentation.
In Section \ref{blow-up-one-point}, \ref{blow-up-one-line} and \ref{blow-up-one-cubic}, we study the three cases of blow-ups in Theorem \ref{main-thm}. In the last section, we give some remarks on related and further problems which we are interested.

\subsection*{Acknowledgement} 
The first author is supported by IBS-R003-D1. He thanks Kyoung-Seog Lee and Jihun Park for useful discussions. The second and third authors are partially supported by NSFC (Grant No. 11626250). They would like to thank Xiangdong Yang for giving some useful comments. The second author also thanks the IBS Center for Geometry and Physics in Pohang, South Korea, for the hospitality during his visit on August in 2017.

\subsection*{Notation and convention}
Throughout the paper, 
we work over the complex number field $\mathbb{C}$.
A variety is an integral separated schemes of finite type over $\mathbb{C}$.
Let  $X$ be a smooth projective variety,
and we will use the following notations:
\begin{center}
  \begin{tabular}{ r l }
     $\omega_X$ (resp. $K_{X}$) & the canonical bundle of $X$ (resp. the corresponding canonical divisor) \\
     $\mathrm{D}(X)$ & the bounded derived category of coherent sheaves on $X$ \\
     $\mathcal{O}_{X}(D)$ & the associated line bundle of a divisor $D$ on $X$, then $\omega_{X} =\mathcal{O}_{X}(K_X)$ \\
    $H^{i}(L)$ & $H^{i}(X, L)$ the $i$-th sheaf cohomology of line bundle $L$ \\
   $h^{i}(L)$ & $\mathrm{dim}\, H^{i}(L)$ the dimension of the $i$-th sheaf cohomology of line bundle $L$ \\
   $\mathbb{P}(\mathcal{E})$ & $\mathrm{Proj}(\mathrm{Sym\, \mathcal{E}^{\vee}})$ the projective bundle of a vector bundle $\mathcal{E}$ \\
  $f_{\ast}$ (resp. $f^{\ast}$) & the derived pushforward (resp. pullback) functor 
               of $f: X \to Y$ \\ 
               & of smooth projective varieties \\
  $\mathcal{F}\otimes \mathcal{G}$ & the derived tensor product of $\mathcal{F},  \mathcal{G}\in \mathrm{D}(X)$
  \end{tabular}
\end{center}


\section{Preliminaries}\label{preliminaries-sod}

In this section, we review some of basic definitions on semiorthogonal decompositions and gather the results that will be of importance to us later on. 
Most of the materials presented here are standard, 
we refer to \cite{Bon90, BK90, BO95, Kuz-ICM, Orlov93} for more detailed discussions.

\subsection{Semiorthogonal decompositions}

Let $\mathcal{T}$ be a triangulated category.
For any morphism $f:E\to F$ between two objects $E$ and $F$ of $\mathcal{T}$, 
there exists a distinguished triangle
$$
E \to F \to  \mathrm{Cone}(f) \to E[1],
$$
where $\mathrm{Cone}(f)$ is called the \textit{cone} of the morphism $f:E\to F$.

\begin{defn}
A \textit{semiorthogonal decomposition} of $\mathcal{T}$
is an ordered full triangulated subcategories $\{\mathcal{A}_{1},\mathcal{A}_{2},\ldots,\mathcal{A}_{l}\}$ of $\mathcal{T}$ such that
\begin{enumerate}
  \item[(1)] $\mathrm{Hom}(\mathcal{A}_{i},\mathcal{A}_{j})=0$ for $i>j$, and
  \item[(2)] for any object $T\in \mathcal{T}$, there exist $T_{i}\in \mathcal{T}$ and a sequence
$$
0=T_{l}\to T_{l-1} \to\cdots \to T_{1}\to T_{0}=T
$$
such that the cone $\mathrm{Cone}(T_{i}\to T_{i-1})\in \mathcal{A}_{i}$, for all $1\leq i\leq l$.
\end{enumerate}
For convenience, we denote by 
$$
\mathcal{T}
=\langle \mathcal{A}_{1},\mathcal{A}_{2},\ldots,\mathcal{A}_{l}\rangle,
$$
the semiorthogonal decomposition of $\mathcal{T}$ with the components $\{\mathcal{A}_{1},\mathcal{A}_{2},\ldots,\mathcal{A}_{l}\}$.
\end{defn}

To construct semiorthogonal decompositions, 
a notion of admissible category plays an important role.
Let us recall the definition.

\begin{defn}
Let $\mathcal{A}$ be a full triangulated subcategory $\mathcal{T}$.
We say that  $\mathcal{A}$ is {\it right admissible} if the inclusion functor
$i: \mathcal{A}  \hookrightarrow \mathcal{T}$ admits a right adjoint functor $i^{!}: \mathcal{T}\to \mathcal{A}$.
If  the functor $i$ has a left adjoint functor $i^{\ast}: \mathcal{T}\to \mathcal{A}$, 
then $\mathcal{A}$ is called  a {\it left admissible category}. 
We say that  $\mathcal{A}$ is {\it admissible} if $\mathcal{A}$ is both left  and right admissible. 
\end{defn}

Now, suppose $\mathcal{T}=\langle \mathcal{A},\mathcal{B}\rangle$ is a semiorthogonal decomposition. 
Then $\mathcal{A}$ is a left admissible category and $\mathcal{B}$ is a right admissible category (\cite[Lemma 3.1]{Bon90}).
Conversely, for any given left admissible category $\mathcal{A}\subset \mathcal{T}$ and right admissible category $\mathcal{B}\subset \mathcal{T}$, there exist two semiorthogonal decompositions
$$
\mathcal{T}
=\langle \mathcal{B}^{\bot},\mathcal{B}\rangle
=\langle \mathcal{A},{}^{\bot}\mathcal{A}\rangle,
$$
where 
$
\mathcal{B}^{\bot}:=\{ T \in \mathcal{T} \mid \mathrm{Hom}(B, T)=0, \forall B\in \mathcal{B}\}
$
is the right orthogonal complement of $\mathcal{B}$,
and
$
{}^{\bot}\mathcal{A}:=\{ T \in \mathcal{T} \mid \mathrm{Hom}(T, A)=0, \forall A\in \mathcal{A}\}
$
is the left orthogonal complement of $\mathcal{A}$ (\cite{BK90}).
In particular, for a semiorthogonal decomposition $\mathcal{T}=\langle \mathcal{A}_{1},\mathcal{A}_{2},\ldots,\mathcal{A}_{l}\rangle$, we have $\mathcal{A}_{j} \subset \mathcal{A}_{i}^{\bot}$ and $\mathcal{A}_{i} \subset {}^{\bot}\mathcal{A}_{j}$ for all $i>j$.
Much more specially, we have the following.

\begin{prop}[\cite{Bon90}, Theorem 3.2]
Let $X$ be a smooth projective variety.
If there exists a semiorthogonal decomposition,
$$
\mathrm{D}(X)=\langle \mathcal{A}_{1},\mathcal{A}_{2},\ldots,\mathcal{A}_{l}\rangle,
$$
then each $\mathcal{A}_{i}$ is an admissible category.
\end{prop}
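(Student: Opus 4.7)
The plan is to leverage the existence of a Serre functor on $\mathrm{D}(X)$ together with a categorical lemma of Bondal--Kapranov that upgrades one-sided admissibility to full admissibility in any triangulated category equipped with a Serre functor.

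First I would recall that since $X$ is smooth projective of dimension $n$, Serre duality endows $\mathrm{D}(X)$ with a Serre functor $S_X(-) = -\otimes \omega_X[n]$. Next I would establish the key lemma: if $i\colon \mathcal{A} \hookrightarrow \mathcal{T}$ is right admissible in a triangulated category $\mathcal{T}$ with Serre functor $S_\mathcal{T}$, then $\mathcal{A}$ is also left admissible. The argument is formal: $\mathcal{A}$ inherits its own Serre functor $S_\mathcal{A} := i^! \, S_\mathcal{T}\, i$ built from the right adjoint $i^!$, and then $i^* := S_\mathcal{A}^{-1} \, i^! \, S_\mathcal{T}$ is verified to be a left adjoint to $i$ by the chain of natural isomorphisms
\[
\mathrm{Hom}_\mathcal{T}(T, iA) \cong \mathrm{Hom}_\mathcal{T}(iA, S_\mathcal{T} T)^\vee \cong \mathrm{Hom}_\mathcal{A}(A, i^! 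S_\mathcal{T} T)^\vee \cong \mathrm{Hom}_\mathcal{A}(i^* T, A),
\]
coming from Serre duality in $\mathcal{T}$, the adjunction $(i, i^!)$, and Serre duality in $\mathcal{A}$. The analogous statement starting from a left adjoint is symmetric, so right admissible, left admissible, and admissible all coincide in this setting.

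Given this lemma I would apply it twice to the given SOD $\mathrm{D}(X) = \langle \mathcal{A}_1, \ldots, \mathcal{A}_l\rangle$. Regrouping as $\mathrm{D}(X) = \langle \langle \mathcal{A}_1, \ldots, \mathcal{A}_{i-1}\rangle, \mathcal{C}_i\rangle$, with $\mathcal{C}_i := \langle \mathcal{A}_i, \ldots, \mathcal{A}_l\rangle$, exhibits $\mathcal{C}_i$ as the right admissible piece of a 2-term SOD; the lemma then makes $\mathcal{C}_i$ admissible in $\mathrm{D}(X)$, and as such $\mathcal{C}_i$ inherits its own Serre functor. Inside $\mathcal{C}_i$ the 2-term SOD $\mathcal{C}_i = \langle \mathcal{A}_i, \mathcal{C}_{i+1}\rangle$ displays $\mathcal{A}_i$ as left admissible in $\mathcal{C}_i$, so applying the lemma one more time, now with $\mathcal{C}_i$ as the ambient category, makes $\mathcal{A}_i$ admissible in $\mathcal{C}_i$. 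Composing the two resulting pairs of adjoints produces both a left and a right adjoint for $\mathcal{A}_i \hookrightarrow \mathrm{D}(X)$, completing the proof.

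The main obstacle is the Bondal--Kapranov lemma itself, and in particular checking that $S_\mathcal{A} = i^! \, S_\mathcal{T}\, i$ is genuinely a Serre functor on $\mathcal{A}$, i.e.\ that the functorial isomorphism $\mathrm{Hom}_\mathcal{A}(A, B) \cong \mathrm{Hom}_\mathcal{A}(B, S_\mathcal{A} A)^\vee$ is compatible with the unit and counit of $(i, i^!)$. Once this point is settled, the subsequent Serre duality manipulation producing the missing adjoint is formal, and the reduction from the multi-term SOD to 2-term SODs via regrouping is routine.
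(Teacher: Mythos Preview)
Your argument is correct and is precisely the standard Bondal--Kapranov proof. Note, however, that the paper does not actually supply a proof of this proposition: it is stated with a citation to \cite{Bon90}, Theorem 3.2, and no argument is given in the text. So there is nothing in the paper to compare your approach against beyond the reference itself, and your outline is exactly the content of that reference (combined with the Serre-functor machinery of \cite{BK90}).

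One small point worth tightening in your write-up: when you set $S_{\mathcal{A}} := i^{!}\, S_{\mathcal{T}}\, i$, you then invoke $S_{\mathcal{A}}^{-1}$ without having said why $S_{\mathcal{A}}$ is an autoequivalence. The functorial isomorphism $\mathrm{Hom}_{\mathcal{A}}(A,B)^{\vee} \cong \mathrm{Hom}_{\mathcal{A}}(B, S_{\mathcal{A}}A)$ is immediate from the adjunction and Serre duality in $\mathcal{T}$, but the fact that any functor satisfying this duality is automatically an equivalence is a separate (easy, Yoneda-style) lemma from \cite{BK90}. You flag this as the ``main obstacle'', which is fair, but it would be cleaner to state explicitly that the invertibility of $S_{\mathcal{A}}$ is part of what needs to be checked, not just the naturality of the isomorphism.
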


\subsection{Orlov's semiorthogonal decompositions}

The first and simplest example of semiorthogonal decomposition (full exceptional collection) is Beilinson's exceptional collection (\cite{Bei78}).
Accurately, 
there is a semiorthogonal decomposition of $\mathrm{D}(\mathbb{P}^{n})$,
$$
\mathrm{D}(\mathbb{P}^{n})
=\langle
\mathcal{O}_{\mathbb{P}^{n}},\mathcal{O}_{\mathbb{P}^{n}}(H),\ldots,\mathcal{O}_{\mathbb{P}^{n}}(nH)
\rangle,
$$
where $H$ is the hyperplane of $\mathbb{P}^{n}$.
Orlov generalizes Beilinson's semiorthogonal decomposition to relative cases (\cite[Corollary 2.7]{Orlov93}).

\begin{thm}[Orlov's projective bundle formula \cite{Orlov93}]\label{Orlov-projbundle-formula}
Suppose $\mathcal{E}$ is a vector bundle of rank $r+1$ on a smooth projective variety $Y$,
and let $\rho:X:=\mathbb{P}(\mathcal{E})\to Y$ be the projective bundle of $\mathcal{E}$.
Then there is a semiorthogonal decomposition
$$
\mathrm{D}(X)=
\langle \rho^{\ast}\mathrm{D}(Y), \rho^{\ast}\mathrm{D}(Y)\otimes \mathcal{O}_{X}(1), 
\ldots,
\rho^{\ast}\mathrm{D}(Y)\otimes \mathcal{O}_{X}(r)\rangle,
$$
where $\mathcal{O}_{X}(1)$ is the Grothendieck line bundle of $X$.
\end{thm}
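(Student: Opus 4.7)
The plan is to verify the three defining properties of the proposed semiorthogonal decomposition: each component $\rho^{*}\mathrm{D}(Y)\otimes \mathcal{O}_{X}(i)$ is equivalent to $\mathrm{D}(Y)$, the components are pairwise semiorthogonal in the prescribed order, and together they generate $\mathrm{D}(X)$. The first two points reduce to standard adjunction computations; the third requires a resolution of the relative diagonal.

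First, I would check fully faithfulness. The projection formula together with the classical relative cohomology calculation $R\rho_{*}\mathcal{O}_{X}\cong \mathcal{O}_{Y}$ gives, for $A,B\in \mathrm{D}(Y)$,
\[
\mathrm{Hom}_{X}(\rho^{*}A,\rho^{*}B) \cong \mathrm{Hom}_{Y}(A, B\otimes R\rho_{*}\mathcal{O}_{X}) \cong \mathrm{Hom}_{Y}(A,B),
\]
so $\rho^{*}$ is fully faithful; twisting by the autoequivalence $(-)\otimes \mathcal{O}_{X}(i)$ extends this to each component. For semiorthogonality with indices $i>j$ in $\{0,\ldots,r\}$, the same adjunction gives
\[
\mathrm{Hom}_{X}(\rho^{*}A\otimes \mathcal{O}_{X}(i), \rho^{*}B\otimes \mathcal{O}_{X}(j)) \cong \mathrm{Hom}_{Y}(A, B\otimes R\rho_{*}\mathcal{O}_{X}(j-i)),
\]
which vanishes because the fiberwise cohomology $H^{\ast}(\mathbb{P}^{r},\mathcal{O}(k))=0$ for $-r\le k\le -1$, applied via flat base change, forces $R\rho_{*}\mathcal{O}_{X}(j-i)=0$ throughout the required range $1\le i-j\le r$.

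For generation, I would invoke the relative Beilinson resolution of the diagonal. Let $p_{1},p_{2}\colon X\times_{Y} X\to X$ denote the two projections and $\Delta$ the relative diagonal. There is a Koszul-type locally free resolution
\[
0\to \mathcal{O}_{X}(-r)\boxtimes \Omega^{r}_{X/Y}(r) \to \cdots \to \mathcal{O}_{X}(-1)\boxtimes \Omega^{1}_{X/Y}(1) \to \mathcal{O}_{X\times_{Y} X} \to \mathcal{O}_{\Delta}\to 0,
\]
where $\boxtimes$ denotes the external tensor product relative to $Y$. For any $F\in \mathrm{D}(X)$, the Fourier--Mukai identity $F\cong Rp_{2*}(Lp_{1}^{*}F\otimes \mathcal{O}_{\Delta})$, combined with projection formula and flat base change, expresses $F$ as an iterated cone of objects $\Omega^{i}_{X/Y}(i)\otimes \rho^{*}R\rho_{*}(F\otimes \mathcal{O}_{X}(-i))$ for $i=0,\ldots,r$. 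The relative Euler sequence and its exterior powers then realize each $\Omega^{i}_{X/Y}(i)$ as an iterated extension of objects of the form $\rho^{*}V_{j}\otimes \mathcal{O}_{X}(j)$ with $0\le j\le i$, placing $F$ in the triangulated category generated by $\rho^{*}\mathrm{D}(Y)\otimes \mathcal{O}_{X}(j)$ for $0\le j\le r$.

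The main obstacle is the construction of the relative Beilinson resolution. On $\mathbb{P}^{r}\times \mathbb{P}^{r}$ it is the Koszul complex attached to a canonical section of a suitable bundle cutting out the diagonal; in the relative setting one must exhibit an analogous section on $X\times_{Y} X$ and verify exactness of the resulting Koszul complex, which is most cleanly done by restricting to the fibers of $Y$ and reducing to the absolute case.
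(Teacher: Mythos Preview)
The paper does not supply a proof of this theorem; it is stated as a result quoted from Orlov's original article \cite{Orlov93} (just as the subsequent blow-up formula is given a one-line reference proof). Your proposal is the standard argument underlying Orlov's theorem and is correct as outlined: adjunction plus $R\rho_{*}\mathcal{O}_{X}(k)=0$ for $-r\le k\le -1$ handles full faithfulness and semiorthogonality, and the relative Beilinson resolution of the diagonal on $X\times_{Y}X$ yields generation. So there is nothing to compare---you have simply filled in a proof that the paper omits by citation.
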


Let $Y$ be a smooth projective variety.
Suppose $i:Z \hookrightarrow Y$ is a closed smooth subvariety of $Y$ of codimension $r+1$.
The normal bundle of $Z$ in $Y$, denoted by $\mathcal{N}_{Z/Y}$,
is a vector bundle of rank $r+1$ on $Z$. 
The blow-up $ \mathrm{Bl}_{Z}Y$ of $Y$ at center $Z$ is a projective morphism  $\pi: \mathrm{Bl}_{Z}Y\to Y$, and the exceptional divisor of the blow-up is $E:=\pi^{-1}(Y)\cong \mathbb{P}(\mathcal{N}_{Z/Y})$. 
Then one has the following blow-up diagram
$$
\xymatrix{
E\cong \mathbb{P}(\mathcal{N}_{Z/Y}) \ar[d]_{\rho} \ar[r]^{j} & X:=\mathrm{Bl}_{Z}Y \ar[d]^{\pi}\\
 Z \ar[r]^{i} & Y.
}
$$

It is important to notice that the canonical divisor of the blow-up $X$ is determined by the following formula
$$
K_{X}=\pi^{\ast}K_{Y}+rE,
$$ 
and the restriction of $\mathcal{O}_{X}(E)$ on each fiber of $\pi$  is isomorphic to $\mathcal{O}_{\mathbb{P}^{r}}(-1)$.
Furthermore,  $\mathcal{O}_{E}(E)\cong  \mathcal{O}_{E}(-1)$.

\begin{thm}[Orlov's blow-up formula \cite{Orlov93}]\label{Orlov-blowup-formula}
Using notations as above, there is a semiorthogonal decomposition
\begin{eqnarray*}
\mathrm{D}(X)
&=&
\langle 
\pi^{\ast}\mathrm{D}(Y), j_{\ast}\rho^{\ast}\mathrm{D}(Z)\otimes \mathcal{O}_{E}(1), \ldots,
j_{\ast}\rho^{\ast}\mathrm{D}(Z)\otimes \mathcal{O}_{E}(r)
\rangle\\
&=&
\langle 
j_{\ast}\rho^{\ast}\mathrm{D}(Z)\otimes \mathcal{O}_{E}(-r), \ldots,
j_{\ast}\rho^{\ast}\mathrm{D}(Z)\otimes \mathcal{O}_{E}(-1), \pi^{\ast}\mathrm{D}(Y)
\rangle
\end{eqnarray*}
where $\mathcal{O}_{E}(1)$ is the Grothendieck line bundle of $E$.
\end{thm}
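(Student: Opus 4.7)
The plan is to verify the three defining properties of the asserted decomposition: each candidate subcategory is the essential image of a fully faithful functor, the images are semiorthogonal in the displayed order, and together they generate $\mathrm{D}(X)$. The two displayed forms should then be equivalent by mutation: using the canonical bundle formula $K_X = \pi^* K_Y + rE$ together with $\mathcal{O}_E(E) = \mathcal{O}_E(-1)$, one checks that the Serre functor $S_X = (-) \otimes \omega_X[\dim X]$ converts a component twisted by $\mathcal{O}_E(k)$ into one twisted by $\mathcal{O}_E(k-r-1)$ (after tensoring with a line bundle pulled back from $Z$ and a homological shift), and reindexing moves $\pi^*\mathrm{D}(Y)$ across the twisted copies of $j_*\rho^*\mathrm{D}(Z)$ to match the two collections.

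For full faithfulness, I would use two classical inputs: $R\pi_*\mathcal{O}_X = \mathcal{O}_Y$ (for the blow-up of a smooth center), which via the projection formula gives $\mathrm{Hom}_X(\pi^* F, \pi^* G) = \mathrm{Hom}_Y(F, G)$; and Orlov's projective bundle formula (Theorem \ref{Orlov-projbundle-formula}) applied to $\rho: E \to Z$. For the functor $\Phi_k(-) := j_*(\rho^*(-) \otimes \mathcal{O}_E(k))$, the $(j_*, j^!)$-adjunction together with the formula $j^!\mathcal{F} = Lj^*\mathcal{F} \otimes \mathcal{O}_E(E)[-1]$ for the smooth divisor $j: E \hookrightarrow X$, followed by the $(\rho^*, R\rho_*)$-adjunction and the projection formula for $\rho$, should reduce the computation of $\mathrm{Hom}_X(\Phi_k F, \Phi_k G)$ to an expression involving $R\rho_*\mathcal{O}_E(m)$ with $m$ in the Beilinson vanishing window $\{-r, \ldots, -1\}$ on fibers $\mathbb{P}^r$, leaving precisely $\mathrm{Hom}_Z(F, G)$. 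The same chain of adjunctions handles every pairwise semiorthogonality vanishing $\mathrm{Hom}_X(\mathcal{A}_i, \mathcal{A}_j) = 0$ for $i > j$, and the constraint $1 \leq k \leq r$ on the twists is exactly what keeps the relevant exponents inside this vanishing window.

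For generation, I would argue by contradiction: if $C \in \mathrm{D}(X)$ is right-orthogonal to every component, then the $(L\pi^*, R\pi_*)$-adjunction applied to $\mathrm{Hom}(\pi^*F, C) = 0$ yields $R\pi_*C = 0$, while the adjunctions behind the vanishings $\mathrm{Hom}(\Phi_k G, C) = 0$ force $R\rho_*(j^!C \otimes \mathcal{O}_E(-k)) = 0$ for all $k \in \{1, \ldots, r\}$; by Orlov's projective bundle formula applied to $\rho: E \to Z$, these vanishings locate $j^!C$ in the subcategory $\rho^*\mathrm{D}(Z) \subset \mathrm{D}(E)$. Combining $R\pi_*C = 0$ with this restriction on $j^!C$, and using the Koszul resolution $0 \to \mathcal{O}_X(-E) \to \mathcal{O}_X \to j_*\mathcal{O}_E \to 0$ together with iterated pushforwards along $\pi$, one deduces $C = 0$. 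The main obstacle I expect is the careful bookkeeping of twists, shifts, and signs in the adjunction calculations of steps (2) and (3), matching the range $1 \leq k \leq r$ to the Beilinson vanishing window, and correctly using the derived pullback $L\pi^*$ in place of $\pi^*$ since the blow-up morphism is not flat along $Z$.
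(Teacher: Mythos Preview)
The paper does not actually prove this theorem: its entire proof reads ``See for example \cite[Theorem 4.3]{Orlov93} or \cite[Proposition 11.18]{Huy06}.'' You, by contrast, have outlined the standard proof that appears in those very references---verifying full faithfulness of $\pi^*$ and of each $\Phi_k$ via adjunction and the Beilinson vanishing on the fibres $\mathbb{P}^r$, checking semiorthogonality by the same adjunctions, and establishing generation by showing the right orthogonal vanishes. Your sketch is essentially correct and matches the argument in Huybrechts' book; the only caveats are the bookkeeping issues you yourself flag (twists and shifts in the $j^*j_*$ triangle for the divisor $E$, and making precise the passage from $R\pi_*C=0$ plus $j^!C \in \rho^*\mathrm{D}(Z)$ to $C=0$). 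The derivation of the second displayed form from the first by iterated Serre rotation is also the right idea, though the index range you obtain after $r$ rotations is $\{1-r,\ldots,0\}$ rather than $\{-r,\ldots,-1\}$; the discrepancy is absorbed by the observation that $j_*\rho^*\mathrm{D}(Z)\otimes\mathcal{O}_E(k)$ as a subcategory is unchanged by tensoring $\mathrm{D}(Z)$ with any line bundle on $Z$, or alternatively one proves the second form directly by the dual adjunction argument. In short: the paper defers to the literature, and your outline is a faithful summary of what that literature contains.
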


\begin{proof}
See for example \cite[Theorem 4.3]{Orlov93} or \cite[Proposition 11.18]{Huy06}.
\end{proof}

\subsection{Full exceptional collections}
This subsection reviews some of basic facts about full exceptional collections.
The full exceptional collections are the most important examples of semiorthogonal decompositions.

\begin{defn}
(1) An object $E\in \mathcal{T}$ is called an {\it exceptional object} if
$$
\mathrm{Hom}(E,E[k])
      =\left\{
      \begin{array}{ll}
      0, & k\neq 0, \\
      \mathbb{C}, & k=0.
      \end{array}
   \right.
$$

(2) An ordered sequence of exceptional objects $\{E_1,E_2,\ldots,E_{l}\}$ 
is called an {\it exceptional collection}  if 
$$
 \mathrm{Hom}(E_i,E_j[k])=0\;\;  \textrm{for any}\; i>j\, \textrm{and for all}\; k\in \mathbb{Z}.
$$
In particular, a pair $(E, F)$ is said to be an {\it exceptional pair} if the sequence $\{E, F\}$ is an exceptional collection.
Moreover, we say that the exceptional collection $\{E_1,E_2,\ldots,E_{l}\}$ is {\it strong} if 
$$
\mathrm{Hom}(E_i,E_j[k])=0,
$$
for $k\neq 0$ and for all $i, j$.

(3) An exceptional collection $\{E_1,E_2,\ldots,E_{l}\}$ is said to be {\it full} 
if the smallest full triangulated subcategory, denoted by $\langle E_1,E_2,\ldots,E_{l} \rangle$, of $\mathcal{T}$
containing $E_1,E_2,\ldots,E_{l}$ is $\mathcal{T}$ itself, i.e., $\mathcal{T}=\langle E_1,E_2,\ldots,E_{l} \rangle$, 
and then we say that $\mathcal{T}$ has a full exceptional collection of {\it length $l$}. 
\end{defn}

\begin{defn}
Let $X$ be a smooth projective variety.
We say that $X$ admits a \textit{(full) exceptional collection of length  $l$} if its derived category $\mathrm{D}(X)$ has a (full) exceptional collection of length $l$.
\end{defn}

It is not difficult to see that if $E\in \mathrm{D}(X)$ is an exceptional object 
then $\langle E\rangle\cong \mathrm{D}(\mathrm{Spec}\,\mathbb{C})$.
We see that an exceptional collection gives a splitting off copies of $\mathrm{D}(\mathrm{Spec}\,\mathbb{C})$ in $\mathrm{D}(X)$ in the sense of semiorthogonal decompositions.
Precisely, we have

\begin{prop}[\cite{Bon90}, Theorem 3.2]
Let $X$ be a smooth projective variety.
If $\{E_1,\ldots, E_l\}$ is an exceptional collection of $X$,
then there exists a semiorthogonal decomposition
$$
\mathrm{D}(X)=\langle \mathcal{A}, E_1, E_2,\ldots, E_l\rangle,
$$
where $\mathcal{A}:=\langle E_1, E_2,\ldots, E_l\rangle^{\bot}\cong\{ F\in \mathrm{D}(X) \mid \mathrm{Hom}(E_i, F[k])=0, \forall k\in \mathbb{Z}, 1\leq i\leq l\}$.
\end{prop}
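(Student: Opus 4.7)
The plan is to reduce the statement to the general fact, already recorded in the excerpt, that every right admissible triangulated subcategory $\mathcal{B} \subset \mathrm{D}(X)$ produces a semiorthogonal decomposition $\mathrm{D}(X) = \langle \mathcal{B}^{\perp}, \mathcal{B}\rangle$. The content is therefore to verify that the subcategory $\mathcal{B} := \langle E_1, \ldots, E_l\rangle$ generated by an exceptional collection is admissible in $\mathrm{D}(X)$, and then to identify the resulting left-orthogonal piece with the explicit description of $\mathcal{A}$.

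First I would handle the case $l = 1$. For a single exceptional object $E$, the assignment $V^{\bullet} \mapsto V^{\bullet} \otimes_{\mathbb{C}} E$ identifies $\mathrm{D}(\mathrm{Spec}\,\mathbb{C})$ with $\langle E\rangle \subset \mathrm{D}(X)$. Because $X$ is smooth and projective, for every $F \in \mathrm{D}(X)$ the complex $\mathrm{RHom}(E, F)$ has finite-dimensional total cohomology and therefore represents an object of $\mathrm{D}(\mathrm{Spec}\,\mathbb{C})$. The candidate right adjoint
$$i^{!} : F \longmapsto \mathrm{RHom}(E, F) \otimes_{\mathbb{C}} E$$
is thus well-defined, and the exceptionality $\mathrm{RHom}(E, E) = \mathbb{C}$ supplies the unit-counit identities. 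A left adjoint $i^{\ast}$ is produced dually using Serre duality on $X$, and together these show that $\langle E\rangle$ is admissible.

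Next I would pass from a single object to the full collection by induction on $l$. Assuming $\mathcal{B}_{l-1} := \langle E_1, \ldots, E_{l-1}\rangle$ is admissible, the semiorthogonality $\mathrm{Hom}(E_l, E_j[k]) = 0$ for $j < l$ places $E_l$ inside ${}^{\perp}\mathcal{B}_{l-1}$, giving an internal decomposition $\mathcal{B}_l = \langle \mathcal{B}_{l-1}, E_l\rangle$ as a semiorthogonal gluing of two admissible pieces. A standard stability result (cf.\ \cite{Bon90}) yields that such a gluing is again admissible in $\mathrm{D}(X)$. Applying the right-admissible existence statement to $\mathcal{B} := \mathcal{B}_l$ now produces $\mathrm{D}(X) = \langle \mathcal{B}^{\perp}, \mathcal{B}\rangle$, which unpacks as $\mathrm{D}(X) = \langle \mathcal{A}, E_1, \ldots, E_l\rangle$ with $\mathcal{A} = \langle E_1, \ldots, E_l\rangle^{\perp}$.

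Finally, the identification $\mathcal{A} = \{F \in \mathrm{D}(X) \mid \mathrm{Hom}(E_i, F[k]) = 0,\ \forall i, k\}$ is immediate from the fact that the $E_i$ together with their shifts and cones generate $\mathcal{B}$ as a triangulated subcategory, so the vanishing of $\mathrm{Hom}(B, F)$ for all $B \in \mathcal{B}$ reduces to the vanishing for each $E_i[k]$. I expect the main technical point to sit in the $l = 1$ step; this is precisely where the smooth-projective hypothesis is used, via finiteness of coherent $\mathrm{Ext}$ groups and Serre duality, to guarantee that both adjoints of the inclusion $\langle E\rangle \hookrightarrow \mathrm{D}(X)$ actually exist as functors landing in $\langle E\rangle$.
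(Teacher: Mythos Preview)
The paper does not supply its own proof of this proposition: it is stated with attribution to \cite[Theorem 3.2]{Bon90} and left without argument, as a standard background result in the preliminaries. So there is no paper-proof to compare against in the strict sense.

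That said, your outline is correct and is essentially the argument one finds in Bondal's original paper. The reduction to right admissibility of $\mathcal{B}=\langle E_1,\ldots,E_l\rangle$, the explicit right adjoint $i^{!}(F)=\mathrm{RHom}(E,F)\otimes E$ in the single-object case (using smoothness and projectivity for finiteness of $\mathrm{Ext}$), the induction on $l$ via stability of admissibility under semiorthogonal gluing, and the final identification of $\mathcal{A}$ are all the standard ingredients. One small remark: for the purposes of the proposition as stated you only need \emph{right} admissibility of $\mathcal{B}$, so the appeal to Serre duality for the left adjoint is not strictly required here (though it does no harm and gives the stronger two-sided admissibility recorded in the earlier proposition of the paper).
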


For our purpose, we need the following easy but useful result.

\begin{lem}\label{usefull-fullness-lem}
If there are two exceptional collections of the same length with only one exceptional object different, then one is full if and only if the other is full.
\end{lem}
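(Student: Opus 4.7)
The plan is to reduce, in each of the two collections, to the case where the differing object sits at the rightmost position, and then pin that object down up to shift inside the semiorthogonal complement of the common sub-collection.

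By symmetry I may assume that the first collection $\{E_1,\ldots,E_{i-1},E_i,E_{i+1},\ldots,E_l\}$ is full, and aim to show the second collection $\{E_1,\ldots,E_{i-1},E'_i,E_{i+1},\ldots,E_l\}$ is also full. First, I apply successive right mutations to push $E_i$ past $E_{i+1},\ldots,E_l$; since each such mutation sends an exceptional collection to an exceptional collection with the same generated admissible subcategory, this produces a full exceptional collection
\[
\{E_1,\ldots,E_{i-1},E_{i+1},\ldots,E_l,\tilde{E}_i\}.
\]
Writing $\mathcal{S}:=\langle E_1,\ldots,E_{i-1},E_{i+1},\ldots,E_l\rangle$, fullness reads $\mathrm{D}(X)=\langle \mathcal{S},\tilde{E}_i\rangle$, which identifies the left orthogonal ${}^{\perp}\mathcal{S}$ with $\langle \tilde{E}_i\rangle\cong \mathrm{D}(\mathrm{Spec}\,\mathbb{C})$.

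Next, I perform the very same sequence of right mutations on the second collection, obtaining an exceptional collection $\{E_1,\ldots,E_{i-1},E_{i+1},\ldots,E_l,\tilde{E}'_i\}$ that generates the same subcategory as $\{E_1,\ldots,E_{i-1},E'_i,E_{i+1},\ldots,E_l\}$. Semiorthogonality at the last slot gives $\mathrm{Hom}(\tilde{E}'_i,E_j[k])=0$ for every $j\neq i$ and every $k$, so $\tilde{E}'_i\in{}^{\perp}\mathcal{S}=\langle \tilde{E}_i\rangle$. The key observation is that any exceptional object in a triangulated category equivalent to $\mathrm{D}(\mathrm{Spec}\,\mathbb{C})$ is isomorphic to the chosen generator up to shift: objects in such a category are finite-dimensional graded $\mathbb{C}$-vector spaces, and the exceptionality conditions force concentration in a single cohomological degree with dimension one. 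Hence $\tilde{E}'_i\cong \tilde{E}_i[n]$ for some $n\in\mathbb{Z}$, so $\langle \tilde{E}'_i\rangle=\langle \tilde{E}_i\rangle$ and
\[
\langle E_1,\ldots,E_{i-1},E'_i,E_{i+1},\ldots,E_l\rangle=\langle \mathcal{S},\tilde{E}'_i\rangle=\langle \mathcal{S},\tilde{E}_i\rangle=\mathrm{D}(X),
\]
which is the desired fullness.

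The main point to be careful about is that the sequence of right mutations really does preserve the admissible subcategory generated by the collection and send exceptional collections to exceptional collections; this is a standard feature of mutation theory for exceptional collections and will be invoked without further comment. Modulo this, the argument reduces to the elementary observation about exceptional objects in $\mathrm{D}(\mathrm{Spec}\,\mathbb{C})$ being unique up to shift, which is what allows the equality $\langle \tilde{E}'_i\rangle=\langle \tilde{E}_i\rangle$ to be promoted into the fullness of the second collection.
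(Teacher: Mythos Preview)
Your proof is correct and takes a genuinely different route from the paper's. The paper places the differing object at the first position, takes a nonzero $A$ in the right orthogonal $\langle E_1',E_2,\ldots,E_n\rangle^{\bot}$, observes that $A\in\langle E_2,\ldots,E_n\rangle^{\bot}=\langle E_1\rangle$ by fullness of the first collection (so $A$ is a direct sum of shifts of $E_1$), and then derives a contradiction from the fact that $\mathrm{Hom}(E_1',E_1[\bullet])$ cannot vanish in all degrees, since otherwise $\{E_1',E_1,E_2,\ldots,E_n\}$ would be an exceptional collection properly extending a full one. Your argument instead mutates the differing object to the rightmost slot and then uses the classification of exceptional objects in $\langle\tilde E_i\rangle\cong\mathrm{D}(\mathrm{Spec}\,\mathbb{C})$ to conclude $\tilde E_i'\cong\tilde E_i[n]$ directly.

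Both arguments ultimately exploit the same phenomenon: the common $(l-1)$ objects cut out a one-object ``slot'' equivalent to $\mathrm{D}(\mathrm{Spec}\,\mathbb{C})$, and any exceptional object landing there must generate it. Your version is more structural and handles an arbitrary position $i$ uniformly, at the cost of invoking mutation theory (which you correctly flag as the point needing care). The paper's version is more elementary---it avoids mutations entirely---but its ``without loss of generality'' really only covers the symmetry in which collection is full; the reduction to the differing object sitting at an endpoint is tacit, and for a general position one would need exactly the kind of mutation step you spell out.
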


\begin{proof}
Without loss of generality, we assume $\{E_{1}, E_{2}, \cdots ,E_{n}\}$ is a full exceptional collection and $\{E_{1}', E_{2}, \ldots ,E_{n}\}$ is another exceptional collection.
Suppose $A\in \langle E_{1}', E_{2}, \ldots ,E_{n}\rangle^{\bot}$ and $A\neq 0$.
Since there is a semiorthogonal decomposition
$$
\mathrm{D}(X)=\langle E_{1}, E_{2}, \cdots ,E_{n}\rangle,
$$
and hence $\langle E_1\rangle \cong \langle E_{2}, \cdots ,E_{n} \rangle^{\bot}$.
Hence $A\in \langle E_{2}, \ldots ,E_{n} \rangle^{\bot}=\langle E_1\rangle$ 
and $A$ can be written as $A\cong \bigoplus E_1[i]^{\oplus j_i}$.
By the assumption, $(E_{1}, E_{1}')$ and $(E_{1}', E_{1})$ are not exceptional pair.
If not, the full exceptional collection $\{E_{1}, E_{2}, \cdots ,E_{n}\}$ will be extended. 
Then there some $i_0\in \mathbb{Z}$ such that $\mathrm{Hom}(E_{1}', E_1[i_0]) \neq 0$.
Therefore, we obtain 
$$
\mathrm{Hom}(\bigoplus E_{1}'[i]^{\oplus j_i}, A[i_0])
\cong 
\mathrm{Hom}(\bigoplus E_{1}'[i]^{\oplus j_i}, \bigoplus E_1[i]^{\oplus j_i}[i_0])
\neq 0.
$$
This is a contradiction and the lemma follows.
\end{proof}

\subsection{Mutations}
In this subsection, we will recall some basic facts upon the mutations which we refer to \cite{Bon90, BK90} for more details. 
Let $\mathcal{T}$ be a triangulated category.
If $i: \mathcal{A}  \hookrightarrow \mathcal{T}$ is an admissible category,
then there exist two functors, for any $F\in \mathcal{T}$,
$$
\mathrm{L}_{\mathcal{A}}(F)
:=\mathrm{Cone}(ii^{!}(F)\to F)
 \;\;\textrm{and}\;\; 
 \mathrm{R}_{\mathcal{A}}(F)
 :=\mathrm{Cone}(F\to ii^{\ast}(F))[-1],
$$
which are called the {\it left} and the {\it right mutation functors} respectively.
More specifically, if $\mathcal{A}$ is generated by an exceptional object $E$,
then
$$
\mathrm{L}_{\mathcal{A}}(F)=\mathrm{Cone}(\mathrm{RHom}(E, F)\otimes E\to F)
$$
and
$$ 
\mathrm{R}_{\mathcal{A}}(F)=\mathrm{Cone}(F\to \mathrm{RHom}(F,E)^{\ast}\otimes E)[-1].
$$

Suppose $\mathcal{T}=\langle \mathcal{A}_{1},\mathcal{A}_{2},\ldots,\mathcal{A}_{l}\rangle$ is a semiorthogonal decomposition.
Then there exist two semiorthogonal decompositions (see \cite{Bon90} or \cite[Lemma 1.9]{BK90}):
for $1\leq j \leq l-1$,
$$
\mathcal{T}
=\langle \mathcal{A}_{1},\mathcal{A}_{2},\ldots,\mathcal{A}_{j-1},\mathrm{L}_{\mathcal{A}_{j}}(\mathcal{A}_{j+1}),\mathcal{A}_{j},\mathcal{A}_{j+2},\ldots,\mathcal{A}_{l}\rangle;
$$
and for any $2\leq j \leq l$,
$$
\mathcal{T}
=\langle \mathcal{A}_{1},\mathcal{A}_{2},\ldots,\mathcal{A}_{j-2}, \mathcal{A}_{j},\mathrm{R}_{\mathcal{A}_{j}}(\mathcal{A}_{j-1}),\mathcal{A}_{j+1},\ldots,\mathcal{A}_{l}\rangle,
$$

In particular,
one has

\begin{lem}[\cite{Bon90,BK90}]\label{mutation-fullness-lem}
Let $X$ be a smooth projective variety.
If $\mathrm{D}(X)=\langle \mathcal{A},\mathcal{B} \rangle$ is a semiorthogonal decomposition, then
$\mathrm{L}_{\mathcal{A}}(\mathcal{B})=\mathcal{B}\otimes \omega_{X}$ 
and $\mathrm{R}_{\mathcal{B}}(\mathcal{A})=\mathcal{A} \otimes \omega_{X}^{\vee}$.
In particular, the sequence $\{ E_1,\ldots,E_n\}$ is a full exceptional collection
if and only if
$\{E_2,\ldots,E_n,E_{n+1}\}$
is a full exceptional collection, 
where $E_{n+1}:=E_1\otimes \omega_{X}^{\vee}$.
\end{lem}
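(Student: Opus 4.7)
The plan is to deduce the two identities $L_{\mathcal{A}}(\mathcal{B}) = \mathcal{B} \otimes \omega_X$ and $R_{\mathcal{B}}(\mathcal{A}) = \mathcal{A} \otimes \omega_X^{\vee}$ from a Serre-functor rotation of the given semiorthogonal decomposition, and then read off the ``in particular'' clause by specialization. Recall that since $X$ is smooth projective of dimension $n$, the Serre functor on $\mathrm{D}(X)$ is $S_X = (-) \otimes \omega_X[n]$, characterized by the natural isomorphism $\mathrm{Hom}(F,G) \cong \mathrm{Hom}(G, S_X F)^{\ast}$.

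The key step is to ``rotate'' $\langle \mathcal{A}, \mathcal{B}\rangle$ using $S_X$. The semiorthogonality $\mathrm{Hom}(\mathcal{B}, \mathcal{A}) = 0$ is turned, via Serre duality, into $\mathrm{Hom}(\mathcal{A}, S_X \mathcal{B}) = 0$. Because $S_X$ is an autoequivalence, $S_X \mathcal{B}$ together with $\mathcal{A}$ still generates $\mathrm{D}(X)$, so $\mathrm{D}(X) = \langle S_X \mathcal{B}, \mathcal{A}\rangle$ is another SOD. But the left mutation formula, applied to the original SOD, gives a further one, $\mathrm{D}(X) = \langle L_{\mathcal{A}}(\mathcal{B}), \mathcal{A}\rangle$. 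In any two-term SOD $\langle \mathcal{C}, \mathcal{A}\rangle$ the first component is determined by $\mathcal{A}$ as $\mathcal{A}^{\perp}$, which is exactly the uniqueness quoted after the definition of orthogonal complement from \cite{BK90}. Therefore $L_{\mathcal{A}}(\mathcal{B}) = S_X(\mathcal{B})$ as triangulated subcategories, and since triangulated subcategories are closed under shifts, $S_X(\mathcal{B}) = \mathcal{B} \otimes \omega_X[n]$ coincides with $\mathcal{B} \otimes \omega_X$. Running the same argument with $S_X^{-1}$ and the SOD $\langle \mathcal{B}, S_X^{-1}\mathcal{A}\rangle$ yields $R_{\mathcal{B}}(\mathcal{A}) = \mathcal{A} \otimes \omega_X^{\vee}$.

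For the ``in particular'' clause, I would apply the second identity with $\mathcal{A} = \langle E_1\rangle$ and $\mathcal{B} = \langle E_2, \ldots, E_n\rangle$. Fullness of $\{E_1, \ldots, E_n\}$ is exactly $\mathrm{D}(X) = \langle \mathcal{A}, \mathcal{B}\rangle$, and right-mutating $\mathcal{A}$ past $\mathcal{B}$ yields $\mathrm{D}(X) = \langle \mathcal{B}, R_{\mathcal{B}}(\mathcal{A})\rangle = \langle E_2, \ldots, E_n,\, E_1 \otimes \omega_X^{\vee}\rangle$, so $\{E_2, \ldots, E_n, E_{n+1}\}$ is full. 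The converse is symmetric, obtained by left-mutating $E_{n+1}$ back past $\mathcal{B}$ via the first identity. The only real bookkeeping issue is the shift $[n]$ introduced by $S_X$: at the level of triangulated subcategories this shift is invisible, and replacing an object of an exceptional collection by a shift of itself does not affect fullness, so the shift causes no harm in either the direct or the converse direction. The main obstacle in the argument is simply ensuring that the ``uniqueness'' step is applied correctly, i.e.\ that the $\mathcal{A}^{\perp}$ appearing in each two-term SOD is genuinely the same triangulated subcategory; once this is in place the rest is formal.
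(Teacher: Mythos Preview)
The paper does not supply its own proof of this lemma; it is stated with a citation to \cite{Bon90,BK90} and used as a black box. Your argument---rotating the semiorthogonal decomposition via the Serre functor $S_X=(-)\otimes\omega_X[n]$, then invoking uniqueness of $\mathcal{A}^{\perp}$ to identify $L_{\mathcal{A}}(\mathcal{B})$ with $S_X(\mathcal{B})$---is exactly the standard proof found in those references, and it is correct. One small tightening: rather than saying ``$S_X\mathcal{B}$ together with $\mathcal{A}$ still generates,'' it is cleaner to argue directly that $S_X\mathcal{B}=\mathcal{A}^{\perp}$ by showing both inclusions via Serre duality (you already have $S_X\mathcal{B}\subset\mathcal{A}^{\perp}$; for the reverse, if $C\in\mathcal{A}^{\perp}$ then $\mathrm{Hom}(S_X^{-1}C,\mathcal{A})\cong\mathrm{Hom}(\mathcal{A},C)^{\ast}=0$, so $S_X^{-1}C\in{}^{\perp}\mathcal{A}=\mathcal{B}$). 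With that adjustment the proof is complete and matches the literature.
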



\section{Exceptional collection of line bundles}\label{Exceptional-line-bundles}

In this section, we review some basic aspects of exceptional collection of line bundles on smooth projective varieties, which will be used in the sequel.

Let $X$ be a smooth projective variety of dimension $n$.  
A line bundle on $X$ is an exceptional object if and only if the structure sheaf $\mathcal{O}_X$ is an exceptional object. 
Recently, Sosna shows that if the Grothendieck group of $X$ is of finite rank then $\mathcal{O}_X$ is an exceptional object (\cite[Proposition 3.1]{Sosna}). 
More interesting, if $X$ is a smooth Fano variety, i.e., the canonical bundle $\omega_X$ is anti-ample, then by Kodaira vanishing theorem any line bundle on $X$ is an exceptional object. Moreover, suppose $X$ is a smooth Fano variety of Picard rank one 
and of index $r$ (i.e., $\omega_X=\mathcal{O}_{X}(-rH)$, where $H$ is the positive generator of $\mathrm{Pic}(X)$); for example, a smooth cubic fourfold is of Picard
rank one and index $3$. 
Then $r\leq n+1$ and there is a semiorthogonal decomposition of D(X),
$$
\mathrm{D}(X)=
\langle
\mathcal{A}_{X},  \mathcal{O}_{X}, \mathcal{O}_{X}(1), \ldots, \mathcal{O}_{X}(r-1)\rangle,
$$
where $\mathcal{A}_{X} = \langle \mathcal{O}_{X}, \mathcal{O}_{X}(1), \ldots, \mathcal{O}_{X}(r-1)\rangle^{\bot}$ (\cite[Corollary 3.5]{Kuz09a}). 
In particular, if $r=n+1$ then $X\cong\mathbb{P}^n$, and $\mathcal{A}_X=0$. 
Basing on a result of Bondal-Polishchuk \cite[Theorem 3.4]{BP94}, 
Vial \cite{Vial} shows that for a smooth projective variety $X$ of dimension $n$ which admits a full exceptional collection of line bundles of length $n+1$ then $X\cong\mathbb{P}^n$ (see \cite[Proposition 1.3]{Vial}).

\subsection{Basics of cohomologically zero line bundles}

Now suppose $X$ is a smooth projective variety and its structure sheaf is an exceptional object. 
In order to classify the exceptional collection of line bundles on $X$, 
we will first classify the line bundles which are cohomologically zero. 
Let us recall the definition of cohomologically zero line bundles.

\begin{defn}\label{def:cohzero}
Let $X$ be a smooth projective variety. 
A line bundle $L\in \mathrm{Pic}(X)$ is called {\it cohomologically zero} if $H^{i}(X,L)=0$ for all $i\in \mathbb{Z}$.
\end{defn}

Note that a line bundle $L$ is cohomologically zero if and only if 
the pair $(\mathcal{O}_{X}, L^{\vee})$ is an exceptional pair if and only if 
its dual line bundle $L^{\vee}$ is {\it left-orthogonal} to $\mathcal{O}_X$ in the sense of Hille-Perling \cite[Definition 3.1 (ii)]{HP11}.

\begin{example}\label{cohom-zero-F_1}
The Picard group of $\mathbb{P}^1\times\mathbb{P}^1$ is
$$
\mathrm{Pic}(\mathbb{P}^1\times\mathbb{P}^1)\cong \mathbb{Z}[S] \oplus \mathbb{Z}[F],
$$
where $S$ is the pullback of hyperplane class 
and $F$ is a fiber of $\mathbb{P}^1\times\mathbb{P}^1\to \mathbb{P}^1$. 
Then a line bundle $\mathcal{O}_{\mathbb{P}^1\times \mathbb{P}^1}(aS+bF)$ 
is cohomologically zero if and only if $a=-1$ or $b=-1$.  
\end{example}

In terms of the notion of cohomologically zero line bundles,
one has the following well-known results; see for example \cite{HP11}.

\begin{lem}\label{lem:exlb}
Let $X$ be a smooth projective variety.
Then  the sequence $\{L_1, L_2,\ldots, L_l\}$ is an exceptional collection of line bundles   on $X$ if and only if $L_j\otimes L_i^{-1}$ are cohomologically zero for all $i>j$.
\end{lem}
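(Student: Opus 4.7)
The plan is short: unwind the definitions by identifying $\mathrm{Ext}$-groups between line bundles with sheaf cohomology, and then the biconditional becomes a tautology.

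First I would record the standard identification: for any two line bundles $L, M$ on $X$, since $L$ is locally free of rank one, $\mathcal{RH}\!om(L, M)\cong L^{\vee}\otimes M$ is a sheaf concentrated in degree zero, hence
\[
\mathrm{Hom}_{\mathrm{D}(X)}(L, M[k]) \;\cong\; \mathrm{Ext}^{k}(L, M) \;\cong\; H^{k}(X,\, M\otimes L^{-1})
\]
for every $k\in\mathbb Z$. This is the only computational content in the proof.

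Now apply this with $L=L_i$ and $M=L_j$. The semi-orthogonality half of being an exceptional collection says $\mathrm{Hom}(L_i,L_j[k])=0$ for all $i>j$ and all $k$, which by the displayed isomorphism is exactly $H^{k}(X,\,L_j\otimes L_i^{-1})=0$ for all $k$, i.e.\ $L_j\otimes L_i^{-1}$ is cohomologically zero in the sense of Definition~\ref{def:cohzero}. For the exceptionality of each $L_i$ separately, one takes $j=i$ and notes $\mathrm{Hom}(L_i,L_i[k])\cong H^{k}(X,\mathcal O_X)$, so $L_i$ is exceptional if and only if $\mathcal O_X$ is; this is the running hypothesis at the start of Section~\ref{Exceptional-line-bundles} (and it holds trivially for all varieties considered later, such as the blow-ups of $\mathbb P^3$), and so does not need to be carried as a separate condition in the lemma.

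There is essentially no obstacle; the only thing to be careful about is bookkeeping with the dualization and the tensor product, so that one writes $L_j\otimes L_i^{-1}$ (not $L_i\otimes L_j^{-1}$) in the condition, matching the direction of the $\mathrm{Hom}$ from $L_i$ into $L_j$. I would expect to write the proof in two or three lines, simply citing the displayed cohomology identification and translating the two halves of the definition.
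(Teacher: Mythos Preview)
Your proof is correct and is the standard argument. The paper does not actually give its own proof of this lemma; it states the result as well-known and simply refers to \cite{HP11}, so your unwinding of the definitions via $\mathrm{Hom}_{\mathrm{D}(X)}(L_i,L_j[k])\cong H^{k}(X,L_j\otimes L_i^{-1})$, together with the standing hypothesis that $\mathcal{O}_X$ is exceptional, is exactly what is needed and matches the intended content.
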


\begin{lem}\label{normalization-lem}
Let $X$ be a smooth projective variety.         
Then the sequence $\{L_1, L_2, \ldots, L_l\}$ is a (full) exceptional collection of line bundles if and only if the normalized sequence $\{\mathcal{O}_X, L_1^{-1}\otimes L_2, \ldots, L_1^{-1}\otimes L_l\}$ is a (full) exceptional collection.
\end{lem}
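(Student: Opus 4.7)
The plan is to exploit the fact that for any line bundle $L$, the functor $-\otimes L$ is an autoequivalence of $\mathrm{D}(X)$ with quasi-inverse $-\otimes L^{-1}$. Applying $-\otimes L_1^{-1}$ to $\{L_1, L_2, \ldots, L_l\}$ produces exactly the normalized sequence $\{\mathcal{O}_X, L_1^{-1}\otimes L_2, \ldots, L_1^{-1}\otimes L_l\}$, so the lemma reduces to the general fact that an exceptional collection (resp.\ full exceptional collection) is sent to an exceptional collection (resp.\ full exceptional collection) under an autoequivalence.

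For the exceptional part, I would note that for any $i,j$ and $k\in\mathbb{Z}$ the isomorphism
\[
\mathrm{Hom}_{\mathrm{D}(X)}(L_1^{-1}\otimes L_i,\, L_1^{-1}\otimes L_j[k]) \;\cong\; \mathrm{Hom}_{\mathrm{D}(X)}(L_i,\, L_j[k])
\]
holds because $-\otimes L_1^{-1}$ is fully faithful, so the exceptionality, pairwise semiorthogonality, and even strongness conditions transfer back and forth. Alternatively, one can invoke Lemma~\ref{lem:exlb} directly: the normalized sequence is exceptional if and only if $(L_1^{-1}\otimes L_j)\otimes (L_1^{-1}\otimes L_i)^{-1} = L_j\otimes L_i^{-1}$ is cohomologically zero for all $i>j$, which is exactly the criterion for the original sequence to be exceptional. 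Either argument handles the ``exceptional collection'' half in both directions.

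For fullness, I would use that any autoequivalence $\Phi$ of $\mathrm{D}(X)$ sends the smallest full triangulated subcategory containing $E_1,\ldots,E_l$ to the smallest full triangulated subcategory containing $\Phi(E_1),\ldots,\Phi(E_l)$; this is immediate from $\Phi$ being exact and an equivalence. Taking $\Phi = -\otimes L_1^{-1}$ and noting $\Phi(\mathrm{D}(X)) = \mathrm{D}(X)$ gives
\[
\langle L_1,\ldots,L_l\rangle = \mathrm{D}(X) \;\;\Longleftrightarrow\;\; \langle \mathcal{O}_X, L_1^{-1}\otimes L_2,\ldots, L_1^{-1}\otimes L_l\rangle = \mathrm{D}(X),
\]
so fullness also passes through the normalization.

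There is really no substantive obstacle here; the statement is a routine consequence of the autoequivalence $-\otimes L_1^{-1}$ on $\mathrm{D}(X)$. The only point one must be slightly careful about is invoking the autoequivalence property (rather than the projection formula on individual $\mathrm{Hom}$ groups) when passing to arbitrary objects of the generated subcategory, so that the fullness conclusion really transfers. Once that is in place, the proof is a few lines.
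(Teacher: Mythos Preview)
Your argument is correct and is precisely the standard one: tensoring by $L_1^{-1}$ is an exact autoequivalence of $\mathrm{D}(X)$, so it preserves the semiorthogonality conditions and carries generated triangulated subcategories to generated triangulated subcategories, giving both the ``exceptional'' and the ``full'' parts of the equivalence. The paper itself does not supply a proof of this lemma; it is stated as a well-known fact (with a reference to Hille--Perling), so there is nothing further to compare.
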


\begin{rem}
As a consequence of the above lemma,
classifying (full) exceptional collection of line bundles
is the same as classifying (full) exceptional collection of \emph{normalized} line bundles of type
$\{\mathcal{O}_X,L_2, L_3,\ldots,  L_l\}$. We also call the operation of tensoring $L_1^{-1}$ as \emph{normalization}.
\end{rem}

\subsection{Augmentation}

In the following, 
we will discuss the behavior of full exceptional collections of line bundles 
under the blow-up of a point.

To begin with, we will review the case of smooth projective surfaces.
Let $\pi: X\to Y$ be the blow-up of a smooth projective surface $Y$ at a point $p\in Y$, 
and denote by $E$ the exceptional divisor of $\pi$.
A collection of line bundles,
$$
\{\mathcal{O}_{Y}(D_1), \mathcal{O}_{Y}(D_2),\ldots, \mathcal{O}_{Y}(D_l)\}
$$ 
is a full exceptional collection if and only if, 
for any $1\leq i \leq l$, 
$$
\{\mathcal{O}_{X}(D_1+E),\ldots ,\mathcal{O}_{X}(D_{i-1}+E), \mathcal{O}_{X}(D_i), \mathcal{O}_{X}(D_i+E), \mathcal{O}_{X}(D_{i+1}), \ldots, \mathcal{O}_{X}(D_l)\}
$$
is a full exceptional collection (\cite[Proposition 2.4]{HP14}).
This process is called \textit{augmentation} (cf. \cite{HP11}).
In fact,  the augmentations allow us to construct many examples of full exceptional collections of line bundles from a known one.
In \cite{HP11}, Hille-Perling give the first systematic study of full exceptional
collections of line bundles on smooth projective surfaces and introduce the notion of {\it standard augmentation} of an exceptional collection.
Recently, Elagin-Lunts \cite{EL15} show that any full exceptional collection of line bundles on a smooth del Pezzo surface is a standard augmentation.

Next, we will generalize this augmentation from surface caes to higher dimensional case.
Now we assume $Y$ is a smooth projective variety of dimension $n$.
Let $\pi: X\to Y$ be the blow-up of $Y$ at a point $p\in Y$ with the exceptional divisor $E$.
Let 
\begin{equation}\label{blowup-point-FCE-before}
\{\mathcal{O}_{Y}(D_1), \mathcal{O}_{Y}(D_2),\ldots, \mathcal{O}_{Y}(D_l)\}
\end{equation}
be a collection of line bundles of length $l$, here $l$ must be no less than $n+1$.
Then, for any $n-1\leq i \leq l$, 
we consider the following collection
\begin{eqnarray}\label{blowup-point-FCE}
\{\mathcal{O}_{X}(D_1+(n-1)E), \ldots,\mathcal{O}_{X}(D_{i-n+1}+(n-1)E), \nonumber\\
\mathcal{O}_{X}(D_{i-n+2}+(n-2)E), \mathcal{O}_{X}(D_{i-n+2}+(n-1)E),\nonumber\\ 
\ldots,\mathcal{O}_{X}(D_{i-1}+E),  \mathcal{O}_{X}(D_{i-1}+2E), \nonumber \\
\mathcal{O}_{X}(D_{i}), \mathcal{O}_{X}(D_{i}+E), \nonumber \\
\mathcal{O}_{X}(D_{i+1}), \mathcal{O}_{X}(D_{i+2}), \ldots,  \mathcal{O}_{X}(D_{l})\}.
\end{eqnarray}

\begin{thm}\label{blowup-point-fullness-lemma}
The collection \eqref{blowup-point-FCE-before} is a full exceptional collection if and only if the collection \eqref{blowup-point-FCE} is also. 
In particular, if $\mathrm{D}(Y)$ has a full exceptional collection of line bundles, so does $\mathrm{D}(X)$.
\end{thm}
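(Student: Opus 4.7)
The plan is to compare the augmented collection \eqref{blowup-point-FCE}, call it $\mathcal{A}$, with the full exceptional collection on $X$ furnished by Orlov's blow-up formula (Theorem \ref{Orlov-blowup-formula}). Since $Z=\{p\}$ is a point and $E\cong\mathbb{P}^{n-1}$ with $\mathcal{O}_E(-1)\cong\mathcal{O}_E(E)$, Orlov's formula specializes to the semiorthogonal decomposition
\[\mathrm{D}(X)=\langle j_*\mathcal{O}_E(-(n-1)),\ldots,j_*\mathcal{O}_E(-1),\pi^*\mathrm{D}(Y)\rangle,\]
in which $\pi^*$ is fully faithful. Consequently, $\{\mathcal{O}_Y(D_1),\ldots,\mathcal{O}_Y(D_l)\}$ is a full exceptional collection of $\mathrm{D}(Y)$ if and only if the \emph{extended} collection
\[\mathcal{O}:=\{j_*\mathcal{O}_E(-(n-1)),\ldots,j_*\mathcal{O}_E(-1),\pi^*\mathcal{O}_Y(D_1),\ldots,\pi^*\mathcal{O}_Y(D_l)\}\]
is a full exceptional collection of $\mathrm{D}(X)$. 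Thus it suffices to show that $\mathcal{A}$ is a full exceptional collection if and only if $\mathcal{O}$ is, and I would split this into two independent claims: a spanning equivalence $\langle\mathcal{A}\rangle=\langle\mathcal{O}\rangle$, and a Hom-vanishing equivalence.

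The key tool for the spanning equivalence is the triangle
\[\mathcal{O}_X(D+(k-1)E)\to\mathcal{O}_X(D+kE)\to j_*\mathcal{O}_E(-k),\]
obtained by tensoring the structure sequence $0\to\mathcal{O}_X(-E)\to\mathcal{O}_X\to j_*\mathcal{O}_E\to 0$ with $\pi^*\mathcal{O}_Y(D)\otimes\mathcal{O}_X(kE)$ and applying the projection formula together with $j^*\pi^*\mathcal{O}_Y(D)\cong\mathcal{O}_E$. Specializing $D=D_{i-k+1}$ for $k=1,\dots,n-1$ realizes each $j_*\mathcal{O}_E(-k)$ as the cone of the $k$-th pair of $\mathcal{A}$, and iterating the same triangle then recovers every $\pi^*\mathcal{O}_Y(D_j)$ from a twisted representative in $\mathcal{A}$ and the $j_*\mathcal{O}_E(-m)$'s; conversely, the triangle expresses every member of $\mathcal{A}$ as an iterated extension of objects of $\mathcal{O}$. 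Hence $\langle\mathcal{A}\rangle=\langle\mathcal{O}\rangle$ as triangulated subcategories of $\mathrm{D}(X)$.

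For the Hom-vanishing equivalence, the crucial computation is
\[\mathrm{Hom}^\bullet\bigl(\mathcal{O}_X(D_j+aE),\mathcal{O}_X(D_{j'}+a'E)\bigr)=H^\bullet\bigl(X,\mathcal{O}_X(D_{j'}-D_j+(a'-a)E)\bigr).\]
A short case analysis across the three segments of $\mathcal{A}$ (initial, middle pairs, final) shows that, for every pair with the second object earlier than the first in $\mathcal{A}$, one has $a'-a\in\{0,1,\dots,n-1\}$ except in the ``same-pair'' case, where $a'-a=-1$. In the first range, the projection formula together with the inductive identity $R\pi_*\mathcal{O}_X(kE)=\mathcal{O}_Y$ for $0\le k\le n-1$---deduced from the same structure sequence and the vanishings $H^\bullet(\mathbb{P}^{n-1},\mathcal{O}(-k))=0$ for $1\le k\le n-1$---reduces the Ext computation to $H^\bullet(Y,\mathcal{O}_Y(D_{j'}-D_j))$, whose vanishing for all $j>j'$ is precisely the exceptionality of $\{\mathcal{O}_Y(D_j)\}$ on $Y$. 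In the same-pair case one gets $H^\bullet(X,\mathcal{O}_X(-E))=H^\bullet(Y,\mathcal{I}_p)=0$ from the ideal sequence of $p$ in $Y$ and the exceptionality of $\mathcal{O}_Y$. Combining the two equivalences yields the theorem in both directions, and proves the last assertion. The main obstacle is the bookkeeping in the case analysis, but each individual vanishing is elementary once the above projection-formula identifications are in hand.
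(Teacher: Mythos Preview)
Your argument is correct and uses the same two ingredients as the paper: Orlov's blow-up semiorthogonal decomposition and the twist triangle $\mathcal{O}_X(D+(k-1)E)\to\mathcal{O}_X(D+kE)\to j_*\mathcal{O}_E(-k)$. The only difference is organizational: the paper treats exceptionality and fullness in two separate steps and, for fullness, checks each implication directly (for ``$\mathcal{A}$ full $\Rightarrow$ \eqref{blowup-point-FCE-before} full'' it argues by taking $A\in\langle\eqref{blowup-point-FCE-before}\rangle^{\perp}$ and showing $\pi^*A\in\langle\mathcal{A}\rangle^{\perp}$), whereas you package both fullness directions into the single identity $\langle\mathcal{A}\rangle=\langle\mathcal{O}\rangle$ and reduce the Hom computations to the pushforward identity $R\pi_*\mathcal{O}_X(kE)\cong\mathcal{O}_Y$ for $0\le k\le n-1$, which is equivalent to the paper's inductive cohomology chase; your framing is slightly more economical but not a genuinely different route.
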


\begin{proof}
\textbf{Step 1: \eqref{blowup-point-FCE-before} is an exceptional collection if and only if \eqref{blowup-point-FCE} is an exceptional collection.}
This claim is a direct consequence from the following:
(1)  From the short exact sequence,
\begin{equation}\label{effective-exact-sequence}
0\to \mathcal{O}_{X}(-E)\to \mathcal{O}_{X}\to \mathcal{O}_{E}\to 0,
\end{equation}
we have a long exact sequence of sheaf cohomology
$$
0\to H^{0}(\mathcal{O}_{X}(-E)) \to H^{0}(\mathcal{O}_{X}) \to H^{0}(\mathcal{O}_{E})  \to H^{1}(\mathcal{O}_{X}(-E))  \to H^{1}(\mathcal{O}_{X}) \to H^{1}(\mathcal{O}_{E})\to \cdots .
$$
From this, we obtain $H^{k}(\mathcal{O}_{X}(-E))=0$ for any $k\in \mathbb{Z}$ if $\mathcal{O}_X\in D(X)$ is an exceptional object.

(2) For any $1\leq p \leq n-1$, from \eqref{effective-exact-sequence},
we tensor with $\mathcal{O}_{X}(D_j-D_i+pE)$ to obtain an exact sequence, 
$$
0\to \mathcal{O}_{X}(D_j-D_i+(p-1)E)\to \mathcal{O}_{X}(D_j-D_i+pE)\to \mathcal{O}_{E}(pE)\to 0.
$$
Since for any $1\leq p \leq n-1$, 
$\mathcal{O}_{E}(pE)\cong \mathcal{O}_{\mathbb{P}^{n-1}}(-p)$ 
is cohomologically zero on $E$, 
thus we have
$$
H^{k}(\mathcal{O}_{X}(D_j-D_i+pE))
\cong  H^{k}(\mathcal{O}_{X}(D_j-D_i+(p-1)E))\cong\cdots\cong H^{k}(\mathcal{O}_{X}(D_j-D_i)),
$$
for any $i>j$ and all $k\in \mathbb{Z}$.

\textbf{Step 2: \eqref{blowup-point-FCE-before} is full  if and only if \eqref{blowup-point-FCE} is a full.}
By Orlov's blow-up formula (Theorem \ref{Orlov-blowup-formula}),
there is a semiorthogonal decomposition of $\mathrm{D}(X)$,
\begin{equation}\label{Orlov-blowup-point-SOD}
\mathrm{D}(X)
=\langle \pi^{\ast}\mathrm{D}(Y), \mathcal{O}_{E}(E), \ldots, \mathcal{O}_{E}((n-1)E) \rangle.
\end{equation}

From  \eqref{effective-exact-sequence},
we tensor $\mathcal{O}_{X}(D_{s}+tE)$ to have an exact sequence 
\begin{equation}\label{effective-exact-sequence-twisted}
0\to \mathcal{O}_{X}(D_{s}+(t-1)E)\to \mathcal{O}_{X}(D_{s}+tE)\to \mathcal{O}_{E}(tE)\to 0.
\end{equation}
\begin{enumerate}
\item[(i)]
Suppose the collection \eqref{blowup-point-FCE-before} is a full exceptional collection.
In \eqref{effective-exact-sequence-twisted}, 
if $s=i, i-1,\ldots ,i-n+2$ and $t=i-s+1$, 
then we have
$$
0\to \mathcal{O}_{X}(D_{s}+(i-s)E)\to \mathcal{O}_{X}(D_{s}+(i-s+1)E)
\to \mathcal{O}_{E}((i-s+1)E)\to 0,
$$
and then  $\mathcal{O}_{E}(pE)\in \langle \eqref{blowup-point-FCE}\rangle$ for $1\leq p \leq n-1$.
Therefore, inductively, 
we may use the exact sequence \eqref{effective-exact-sequence-twisted} 
to show $\mathcal{O}_{X}(D_{s})\in \langle \eqref{blowup-point-FCE}\rangle$ 
for $s=1, 2, \ldots,i-1$.
For example, for $s=1$ and $1\leq p \leq n-1$, 
from the exact sequence \eqref{effective-exact-sequence-twisted}, 
we have
$$
0\to \mathcal{O}_{X}(D_{1}+(p-1)E)\to \mathcal{O}_{X}(D_{1}+pE)
\to \mathcal{O}_{E}(pE)\to 0,
$$
Therefore, if $\mathcal{O}_{X}(D_{1}+pE)\in \langle \eqref{blowup-point-FCE}\rangle$,
then $\mathcal{O}_{X}(D_{1}+(p-1)E)\in \langle \eqref{blowup-point-FCE}\rangle$ 
and thus
$\mathcal{O}_{X}(D_{1})\in \langle \eqref{blowup-point-FCE}\rangle$.

\item[(ii)]  Conversely, suppose the collection \eqref{blowup-point-FCE} is a full exceptional collection.
Assume $A\in \langle \eqref{blowup-point-FCE-before}\rangle^{\bot}$.
Next we want to show $\pi^{\ast}A \in \langle \eqref{blowup-point-FCE}\rangle^{\bot}$ and hence $A=0$.
To this end, from \eqref{effective-exact-sequence-twisted},
there is a long exact sequence
\begin{eqnarray}\label{effective-exact-sequence-twisted-long}
&\cdots &\to \mathrm{Hom}(\mathcal{O}_{E}(tE)[j+1], \pi^{\ast}A[k])
\to \mathrm{Hom}(\mathcal{O}_{X}(D_{s}+(t-1)E)[j], \pi^{\ast}A[k]) \nonumber \\
&& \to\mathrm{Hom}(\mathcal{O}_{X}(D_{s}+tE)[j], \pi^{\ast}A[k])  \to \cdots,
\end{eqnarray}
for any $j, k\in \mathbb{Z}$.
By \eqref{Orlov-blowup-point-SOD},
for any $1\leq t\leq n-1$, 
$\mathcal{O}_{E}(tE)\in {}^{\bot}\langle \pi^{\ast}\mathrm{D}(Y)\rangle$, 
we obtain
$$
\mathrm{Hom}(\mathcal{O}_{E}(tE)[j], \pi^{\ast}A[k])=0, \; \textrm{for any}\; j, k\in \mathbb{Z}.
$$
From this and \eqref{effective-exact-sequence-twisted-long},
we have
\begin{eqnarray*}
\mathrm{Hom}(\mathcal{O}_{X}(D_{s}), \pi^{\ast}A[k])\cong \cdots
&\cong& \mathrm{Hom}(\mathcal{O}_{X}(D_{s}+(t-1)E), \pi^{\ast}A[k]) \\
&\cong&  \mathrm{Hom}(\mathcal{O}_{X}(D_{s}+tE), \pi^{\ast}A[k]),
\end{eqnarray*}
for any $k\in \mathbb{Z}$, $1\leq s\leq l$ and $1\leq t\leq n-1$.
Therefore, we obtain that $\pi^{\ast}A \in \langle \eqref{blowup-point-FCE}\rangle^{\bot}$. 
\end{enumerate}
This completes the proof.
\end{proof}


\section{Blow-up a point in $\mathbb{P}^{3}$}
\label{blow-up-one-point}

In this section, we will classify cohomologically zero line bundles and hence the exceptional collection of line bundles of length $6$ on 
the blow-up of $\mathbb{P}^3$ at a point and show these exceptional collections are full.
\subsection{Geometry of $X$}
Let $\pi: X\to \mathbb{P}^{3}$ be the blow-up of $\mathbb{P}^{3}$ at a point $p\in \mathbb{P}^{3}$, and let $E$ be the exceptional divisor.
Then $X$ is a toric smooth Fano threefold with the canonical divisor
$$
K_{X}=\pi^{\ast} K_{\mathbb{P}^{3}}+2E=-4H+2E,
$$
where $H$ is the pullback of hyperplane class in $\mathbb{P}^3$.
The Picard group of $X$ is 
$$
\mathrm{Pic}(X)\cong \mathrm{Pic}(\mathbb{P}^{3})\oplus \mathbb{Z}[E]
= \mathbb{Z}[H]\oplus \mathbb{Z}[E]
$$
with the intersection numbers  
$$
H^3=1, H^2E=0, HE^2=0, E^3=1.
$$
Let $a$ be an integer. Then $X$ is also the projective bundle
\begin{equation}\label{eq:projective bundle 1}
X\cong \mathbb{P}(\mathcal{E}) \stackrel{\rho}\to \mathbb{P}^2, \text{ with } \mathcal{E}=\mathcal{O}_{\mathbb{P}^2}(-a+1)\oplus \mathcal{O}_{\mathbb{P}^2}(-a).
\end{equation}
Moreover, since $\rho^{\ast}\mathcal{O}_{\mathbb{P}^2}(1)=[H-E]$ and
$
K_X=\rho^{\ast}(K_{\mathbb{P}^{2}} +\det(\mathcal{E}^{\vee}))
        \otimes \mathcal{O}_{X}(-2),
$
we have 
$$\mathcal{O}_X(1)=\mathcal{O}_X(aH-(a-1)E).$$

\subsection{Cohomologically zero line bundles}

\begin{lem}\label{Char-H0-H3-point}
$H^{0}(\mathcal{O}_{X}(aH+bE))=0$ if and only if $a<0$ or $a+b<0$.
Consequently, $H^{3}(\mathcal{O}_{X}(aH+bE))=0$ if and only if $a>-4$ or $a+b>-2$.
\end{lem}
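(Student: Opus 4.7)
The plan is to compute $H^{0}(\mathcal{O}_{X}(aH+bE))$ by pushing down via $\pi: X\to \mathbb{P}^{3}$, using the Leray spectral sequence and the projection formula, since $H$ is the pullback of the hyperplane class. This reduces the problem to understanding $\pi_{\ast}\mathcal{O}_{X}(bE)$ on $\mathbb{P}^{3}$ and then taking global sections of its twist by $\mathcal{O}_{\mathbb{P}^{3}}(a)$.

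First I would compute $\pi_{\ast}\mathcal{O}_{X}(bE)$ in the two sign ranges. For $b\geq 0$, I would use the sequence
\[
0\to \mathcal{O}_{X}((b-1)E)\to \mathcal{O}_{X}(bE)\to \mathcal{O}_{E}(bE)\cong \mathcal{O}_{\mathbb{P}^{2}}(-b)\to 0
\]
and observe that $\mathcal{O}_{\mathbb{P}^{2}}(-b)$ pushes forward to $0$ on $\mathbb{P}^{3}$ for $b\geq 1$ (being supported on the point $p$ with vanishing $H^{0}$); induction then yields $\pi_{\ast}\mathcal{O}_{X}(bE)=\pi_{\ast}\mathcal{O}_{X}=\mathcal{O}_{\mathbb{P}^{3}}$. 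For $b\leq 0$, $\mathcal{O}_{X}(-E)$ is the relative ideal sheaf of $E$, so $\pi_{\ast}\mathcal{O}_{X}(bE)=\mathcal{I}_{p}^{-b}$, the $(-b)$-th power of the ideal of $p$.

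Now I would split into four cases according to the signs of $a$ and $b$. If $a<0$, then in either subcase $\pi_{\ast}\mathcal{O}_{X}(bE)$ is a subsheaf of $\mathcal{O}_{\mathbb{P}^{3}}$, and $H^{0}(\mathbb{P}^{3},\mathcal{O}_{\mathbb{P}^{3}}(a))=0$ forces vanishing. If $a\geq 0$ and $b\geq 0$, one gets $H^{0}(\mathbb{P}^{3},\mathcal{O}_{\mathbb{P}^{3}}(a))\neq 0$. The decisive case is $a\geq 0$, $b<0$, where $H^{0}(X,\mathcal{O}_{X}(aH+bE))$ equals the space of degree $a$ polynomials vanishing to order $\geq -b$ at $p$. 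Passing to affine coordinates at $p$ identifies this with polynomials in three variables of degree $\leq a$ with all monomials of degree $\geq -b$, which is nonzero exactly when $-b\leq a$, i.e.\ $a+b\geq 0$. Putting the cases together yields the claimed criterion $a<0$ or $a+b<0$ for $H^{0}$ to vanish.

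For the $H^{3}$ statement, I would apply Serre duality on the smooth projective threefold $X$, using the formula $\omega_{X}=\mathcal{O}_{X}(-4H+2E)$ recorded above:
\[
H^{3}(\mathcal{O}_{X}(aH+bE))\cong H^{0}(\mathcal{O}_{X}((-4-a)H+(2-b)E))^{\vee}.
\]
Substituting $a\mapsto -4-a$, $b\mapsto 2-b$ in the $H^{0}$ criterion gives vanishing iff $-4-a<0$ or $(-4-a)+(2-b)<0$, i.e.\ $a>-4$ or $a+b>-2$. The only real obstacle is the bookkeeping in the mixed-sign case for $H^{0}$, which the affine-coordinate description of $\mathcal{I}_{p}^{-b}$ handles cleanly.
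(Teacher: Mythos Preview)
Your proof is correct, but it takes a different route from the paper's. The paper argues purely in terms of effectivity and intersection with nef divisors: if $aH+bE$ is effective then intersecting with the nef classes $H^{2}$ and $(H-E)^{2}$ forces $a\geq 0$ and $a+b\geq 0$; conversely, when $a\geq 0$ and $a+b\geq 0$ one writes $aH+bE$ as a nonnegative combination of the effective divisors $H$, $E$, and $H-E$. The $H^{3}$ statement is then deduced by Serre duality, exactly as you do.

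Your approach instead computes $H^{0}$ directly by pushing down along $\pi$ and identifying $\pi_{\ast}\mathcal{O}_{X}(bE)$ with $\mathcal{O}_{\mathbb{P}^{3}}$ (for $b\geq 0$) or with $\mathcal{I}_{p}^{-b}$ (for $b\leq 0$), reducing everything to counting degree-$a$ polynomials vanishing to prescribed order at $p$. This is more explicit and gives the actual dimension of $H^{0}$, not just its (non)vanishing; the paper's argument is shorter and only needs the nefness of $H$ and $H-E$ together with the intersection numbers already recorded. Both arguments generalize: yours transports unchanged to any blow-up of a projective space at a point, while the paper's template (intersect with well-chosen nef classes) is what it reuses in the line and twisted-cubic cases, where there is no analogue of the single-point ideal-sheaf description.
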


\begin{proof}

First, we show that $H^{0}(\mathcal{O}_{X}(aH+bE))>0$ if and only if $a\geq 0$ and $a+b\geq0$. In fact, if $a\geq 0$ and $a+b\geq0$, then $aH+bE$ is an effective divisor.
Conversely, suppose $aH+bE$ is an effective divisor ($a, b\in \mathbb{Z}$).
Since $H$ is a nef divisor,
then the intersection number
$$
H^{2}(aH+bE)=a\geq 0.
$$
Since $H-E$ is base-point free and hence a nef divisor,
and then the intersection number
$$
(H-E)^{2}(aH+bE)=a+b\geq 0.
$$

For the second part, by Serre duality, we have
$$
H^{3}(\mathcal{O}_{X}(aH+bE))\cong H^{0}(\mathcal{O}_{X}((-4-a)H-(b-2)E)).
$$
Then the lemma follows.
\end{proof}

To classify the cohomologically zero line bundles,
we have the following observation.

\begin{lem}\label{H1H2-zero-lem-point}
For any $a, b\in \mathbb{Z}$,
$h^{1}(\mathcal{O}_{X}(aH+bE))h^{2}(\mathcal{O}_{X}(aH+bE))=0$.
\end{lem}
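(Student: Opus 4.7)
My plan is to exploit the $\mathbb{P}^{1}$-bundle structure $\rho: X \to \mathbb{P}^{2}$ recorded in \eqref{eq:projective bundle 1} and reduce the computation of $H^{i}(X, \mathcal{O}_{X}(aH+bE))$ to cohomology of line bundles on $\mathbb{P}^{2}$ via the Leray spectral sequence. The pay-off is that $\mathbb{P}^{2}$ has $H^{1}$ vanishing for every line bundle, and for the fibration $\rho$ the derived pushforward of the Grothendieck line bundle is concentrated in a single cohomological degree, so in each range of parameters one of $h^{1}$ or $h^{2}$ must vanish on $X$.

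First I would normalize the projective bundle description by taking $a=0$ in \eqref{eq:projective bundle 1}, so that $\mathcal{E}=\mathcal{O}_{\mathbb{P}^{2}}(1)\oplus \mathcal{O}_{\mathbb{P}^{2}}$, $\mathcal{O}_{X}(1)=\mathcal{O}_{X}(E)$, and $\rho^{\ast}\mathcal{O}_{\mathbb{P}^{2}}(1)=\mathcal{O}_{X}(H-E)$. A direct computation then gives the identity
\begin{equation*}
\mathcal{O}_{X}(aH+bE) \;\cong\; \rho^{\ast}\mathcal{O}_{\mathbb{P}^{2}}(a) \otimes \mathcal{O}_{X}(a+b).
\end{equation*}
Setting $k:=a+b$, the standard description of the derived pushforward for a $\mathbb{P}^{1}$-bundle gives $R^{0}\rho_{\ast}\mathcal{O}_{X}(k)=\mathrm{Sym}^{k}\mathcal{E}^{\vee}$ and $R^{1}\rho_{\ast}\mathcal{O}_{X}(k)=0$ for $k\geq 0$; both vanish for $k=-1$; and $R^{0}\rho_{\ast}\mathcal{O}_{X}(k)=0$, $R^{1}\rho_{\ast}\mathcal{O}_{X}(k) = \mathrm{Sym}^{-k-2}\mathcal{E} \otimes \det \mathcal{E}^{\vee}$ for $k\leq -2$ (the last by relative Serre duality on the $\mathbb{P}^{1}$-fibers).

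Now I would plug this into the Leray / projection-formula computation
\begin{equation*}
H^{i}(X,\mathcal{O}_{X}(aH+bE)) \;=\; \bigoplus_{p+q=i} H^{p}\bigl(\mathbb{P}^{2},\mathcal{O}_{\mathbb{P}^{2}}(a)\otimes R^{q}\rho_{\ast}\mathcal{O}_{X}(k)\bigr)
\end{equation*}
and split into three cases. If $k\geq 0$, only $q=0$ contributes, so $H^{i}(X,\cdot)$ is a direct sum of groups $H^{i}(\mathbb{P}^{2},\mathcal{O}_{\mathbb{P}^{2}}(n))$; these vanish for $i=1$, hence $h^{1}=0$. If $k\leq -2$, only $q=1$ contributes, giving $H^{i}(X,\cdot)=H^{i-1}(\mathbb{P}^{2},\text{direct sum of line bundles})$; the case $i=2$ reduces to $H^{1}$ on $\mathbb{P}^{2}$, so $h^{2}=0$. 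If $k=-1$, both $R^{q}\rho_{\ast}$ vanish, and all cohomology of $X$ vanishes. In every case, $h^{1}\cdot h^{2}=0$.

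There is no real obstacle: the whole argument is a bookkeeping exercise. The only point requiring mild care is the identification $\mathcal{O}_{X}(aH+bE)\cong \rho^{\ast}\mathcal{O}_{\mathbb{P}^{2}}(a)\otimes \mathcal{O}_{X}(a+b)$ with the chosen normalization of $\mathcal{E}$, after which the vanishing statements on $\mathbb{P}^{2}$ (no middle cohomology for any line bundle) supply the result instantly.
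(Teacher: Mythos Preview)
Your argument is correct, but it takes a genuinely different route from the paper's. The paper works directly on $X$ via the exceptional divisor: tensoring the sequence $0\to\mathcal{O}_{X}(-E)\to\mathcal{O}_{X}\to\mathcal{O}_{E}\to 0$ by $\mathcal{O}_{X}(aH+bE)$ and using $H^{1}(\mathcal{O}_{E}(-b))=H^{1}(\mathcal{O}_{\mathbb{P}^{2}}(-b))=0$ together with $H^{1}(\mathcal{O}_{X}(aH))=H^{1}(\mathcal{O}_{\mathbb{P}^{3}}(a))=0$, one gets $h^{1}(\mathcal{O}_{X}(aH+bE))=0$ for all $b\geq 0$ by a one-line induction; Serre duality then converts this into $h^{2}=0$ for $b<0$. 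You instead descend along $\rho:X\to\mathbb{P}^{2}$: after identifying $\mathcal{O}_{X}(aH+bE)\cong\rho^{\ast}\mathcal{O}_{\mathbb{P}^{2}}(a)\otimes\mathcal{O}_{X}(a+b)$, the fact that $R\rho_{\ast}\mathcal{O}_{X}(k)$ is concentrated in a single degree and splits as a sum of line bundles (since $\mathcal{E}$ does) reduces everything to the vanishing of $H^{1}$ for line bundles on $\mathbb{P}^{2}$. Both proofs ultimately rest on the same phenomenon---no middle cohomology for line bundles on projective space---but the paper's version is more elementary (only exact sequences and Serre duality, no pushforward formulas), while yours replaces the induction by structural input and would transport verbatim to any $\mathbb{P}^{r}$-bundle over a base with this vanishing property. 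One cosmetic remark: the displayed direct sum $\bigoplus_{p+q=i}$ is really the $E_{2}$-page of Leray, not the abutment in general; your case analysis (only one $q$ contributes) is exactly what makes the identification legitimate, so the argument stands as written.
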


\begin{proof}
From the short exact sequence,
$$
0\to \mathcal{O}_{X}(-E) \to \mathcal{O}_{X} \to \mathcal{O}_{E} \to 0,
$$
we tensor with $\mathcal{O}_{X}(aH+bE)$ to obtain an exact sequence of sheaves, 
$$
0\to \mathcal{O}_{X}(aH+(b-1)E)\to \mathcal{O}_{X}(aH+bE)\to \mathcal{O}_{E}(aH+bE)\to 0.
$$
Taking sheaf cohomology, 
we obtain a long exact sequence,
$$
\cdots \to  H^{1}(\mathcal{O}_{X}(aH+(b-1)E)) \to H^{1}(\mathcal{O}_{X}(aH+bE)) 
\to H^{1}( \mathcal{O}_{E}(aH+bE)) \to \cdots.
$$
Since $H^{1}(\mathcal{O}_{X}(aH))\cong H^{1}(\mathcal{O}_{\mathbb{P}^{3}}(a))=0$ 
and $H^{1}(\mathcal{O}_{E}(aH+bE))\cong H^{1}(\mathcal{O}_{\mathbb{P}^{2}}(-b))=0$,
then we have the following inequalities
$$
0=h^{1}(\mathcal{O}_{X}(aH))
\geq h^{1}(\mathcal{O}_{X}(aH+E)) 
\geq \cdots 
\geq h^{1}(\mathcal{O}_{X}(aH+(b-1)E))
\geq h^{1}(\mathcal{O}_{X}(aH+bE))
$$
for $b\geq 0$ and $a\in \mathbb{Z}$,
and hence $H^{1}(\mathcal{O}_{X}(aH+bH))=0$ for $b\geq0$ and $a\in \mathbb{Z}$.
Therefore, if $b< 0$ and thus $2-b>0$,
by Serre duality, 
then we have
$$
H^{2}(\mathcal{O}_{X}(aH+bE))\cong H^{1}(\mathcal{O}_{X}((-a-4)H-(b-2)E))=0.
$$
This completes the proof.
\end{proof}

Now we give the characterization of cohomologically zero line bundles on the blow-up of $\mathbb{P}^3$ at a point.

\begin{prop}\label{Coh-zero-line-bundles-point}
A line bundle $\mathcal{O}_{X}(aH+bE)$ is cohomologically zero if and only if one of the following holds:
\begin{enumerate}
\item[(1)]  $a+b=-1$;
\item[(2)]  $a=-1$, $b=1$;
\item[(3)]  $a=-1$, $b=2$;
\item[(4)]  $a=-2$, $b=0$;
\item[(5)]  $a=-2$, $b=2$;
\item[(6)]  $a=-3$, $b=0$;
\item[(7)]  $a=-3$, $b=1$.
\end{enumerate}
\end{prop}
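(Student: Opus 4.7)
The plan is to reduce the vanishing of all four cohomology groups to just three simpler conditions, thereby converting the problem into a Diophantine equation. Note first that by Lemma \ref{H1H2-zero-lem-point}, $h^{1}(L) \cdot h^{2}(L) = 0$ for any line bundle $L = \mathcal{O}_{X}(aH+bE)$. Hence, if we also have $h^{0}(L) = h^{3}(L) = 0$, then
$$
\chi(L) = h^{2}(L) - h^{1}(L)
$$
expresses $\chi(L)$ as the difference of two non-negative integers whose product is zero. Consequently, $L$ is cohomologically zero if and only if $h^{0}(L) = h^{3}(L) = 0$ and $\chi(L) = 0$. The vanishing conditions on $h^{0}$ and $h^{3}$ are already described by Lemma \ref{Char-H0-H3-point}, so the work is concentrated in computing $\chi$ and solving $\chi = 0$.

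Next, I would compute $\chi(\mathcal{O}_X(aH+bE))$ by applying the same exact sequence used in the proof of Lemma \ref{H1H2-zero-lem-point}, namely
$$
0 \to \mathcal{O}_X(aH + (k-1)E) \to \mathcal{O}_X(aH + kE) \to \mathcal{O}_{\mathbb{P}^2}(-k) \to 0,
$$
together with $\chi(\mathcal{O}_{\mathbb{P}^2}(-k)) = \binom{2-k}{2}$ and $\chi(\mathcal{O}_X(aH)) = \binom{a+3}{3}$. A telescoping sum (and the hockey-stick identity) should yield the closed form
$$
\chi(\mathcal{O}_X(aH + bE)) = \binom{a+3}{3} + \binom{b}{3},
$$
valid for all $a,b \in \mathbb{Z}$ under the polynomial convention $\binom{x}{3} = x(x-1)(x-2)/6$.

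The equation $\chi = 0$ becomes $(a+1)(a+2)(a+3) + b(b-1)(b-2) = 0$. Substituting $u = a+2$ and $v = b-1$ reduces this to the symmetric form $u^{3} + v^{3} = u + v$, which factors as
$$
(u+v)\bigl(u^{2} - uv + v^{2} - 1\bigr) = 0.
$$
The factor $u+v = 0$ recovers exactly the infinite family $a+b = -1$ of case (1). The factor $u^{2} - uv + v^{2} = 1$ is the norm form of the Eisenstein integers; its integer solutions are the six units $(u,v) \in \{(\pm 1, 0),\, (0, \pm 1),\, (1,1),\, (-1,-1)\}$, which translate exactly to the six points listed in cases (2)--(7).

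Finally, I would verify that the necessary vanishing $h^{0}(L) = h^{3}(L) = 0$ is automatic for all solutions found: when $a+b = -1$, both $a+b < 0$ and $a+b > -2$ hold, so Lemma \ref{Char-H0-H3-point} applies; and for the six sporadic points, $a \in \{-1,-2,-3\}$ satisfies both $a < 0$ and $a > -4$ directly. There is no real obstacle here beyond bookkeeping; the main step where care is needed is establishing the closed formula for $\chi$, since one must handle both $b \geq 0$ and $b < 0$ uniformly (or treat them separately and check the formulas agree).
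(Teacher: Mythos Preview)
Your proposal is correct and follows the same overall strategy as the paper: reduce to the three conditions $h^{0}=0$, $h^{3}=0$, $\chi=0$ via Lemmas \ref{Char-H0-H3-point} and \ref{H1H2-zero-lem-point}, then solve the resulting Diophantine equation. The differences are only in execution. The paper computes $\chi$ by Riemann--Roch (using the blow-up formula for $c_{2}$), whereas you obtain the same closed form $\binom{a+3}{3}+\binom{b}{3}$ by telescoping the exact sequence on $E$; both give $(a+1)(a+2)(a+3)+b(b-1)(b-2)=0$. For the Diophantine step, the paper substitutes $x=a+1$, $y=-b$ and factors $2(x(x+1)(x+2)-y(y+1)(y+2))=(x-y)\bigl((x+y)^{2}+(x+3)^{2}+(y+3)^{2}-14\bigr)$, reducing to a small sum-of-three-squares search; your substitution $u=a+2$, $v=b-1$ centers the cubic to $u^{3}+v^{3}=u+v$ and factors it as $(u+v)(u^{2}-uv+v^{2}-1)$, so the sporadic solutions are exactly the six units of the Eisenstein norm form. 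Your route to the integer solutions is a bit cleaner and more conceptual, but neither approach requires anything the other does not.
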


\begin{proof}

Suppose $H^{0}(\mathcal{O}_{X}(aH+bE))= H^{3}(\mathcal{O}_{X}(aH+bE))=0$.
Then $\mathcal{O}_{X}(aH+bE)$ is cohomologically zero if and only if $\chi(\mathcal{O}_{X}(aH+bE))=0$.
In fact, by Lemma \ref{H1H2-zero-lem-point}, 
$h^{1}(\mathcal{O}_{X}(aH+bE))h^{2}(\mathcal{O}_{X}(aH+bE))=0$,
then we have
$$
\chi(\mathcal{O}_{X}(aH+bE))^2=(h^{1}(\mathcal{O}_{X}(aH+bE))^2+(h^{2}(\mathcal{O}_{X}(aH+bE))^2.
$$
Then $\chi(\mathcal{O}_{X}(aH+bE))=0$ if and only if $h^{1}(\mathcal{O}_{X}(aH+bE))=h^{2}(\mathcal{O}_{X}(aH+bE)=0$.

By Riemann-Roch Theorem, 
we have
\begin{eqnarray}\label{RR-formula}
\chi(\mathcal{O}_{X}(aH+bE))
&=&\int_{X} \mathrm{ch}(\mathcal{O}_{X}(aH+bE))\mathrm{Td}(X) \nonumber\\
&=& \frac{c_{1}(X)c_{2}(X)}{24}+\frac{(c_{1}^{2}(X)
        +c_{2}(X))c_{1}(\mathcal{O}_{X}(aH+bE))}{12} \nonumber\\
&&     +\frac{c_{1}(X)c_{1}^{2}(\mathcal{O}_{X}(aH+bE))}{4}
         +\frac{c_{1}^{3}(\mathcal{O}_{X}(aH+bE))}{6}.
\end{eqnarray}
By the blow-up formula of Chern classes, 
$$
c_{2}(X)=\pi^{\ast}c_{2}(\mathbb{P}^{3})=6H^2,
$$
and by Riemann-Roch formula \eqref{RR-formula},
we obtain
\begin{eqnarray*}
\chi(\mathcal{O}_{X}(aH+bE))
          &=&\frac{1}{6}(a^3+6a^2+11a+b^3-3b^2+2b+6) \\
          &=&\frac{1}{6}((a+1)(a+2)(a+3)+b(b-1)(b-2)).
\end{eqnarray*}
Therefore, by Lemma \ref{Char-H0-H3-point},
$\mathcal{O}_{X}(aH+bE)$ is cohomologically zero if and only if
the following condition hold:
\begin{enumerate}
   \item[(i)]   $a<0$, or $a+b<0$;
   \item[(ii)]   $a>-4$, or $a+b>-2$;
   \item[(iii)] $(a+1)(a+2)(a+3)+b(b-1)(b-2)=0$.
\end{enumerate}

Next, we denote $x:=a+1$ and $y:=-b$,
then $(a+1)(a+2)(a+3)+b(b-1)(b-2)=0$ is equivalent to 
$x(x+1)(x+2)=y(y+1)(y+2)$.
Since
\begin{eqnarray*}
2(x(x+1)(x+2)-y(y+1)(y+2)) 
     &=&  (x-y)(x^2+xy+y^2+3x+3y+2)\\
     &=&  (x-y)((x+y)^2+(x+3)^2+(y+3)^2-14),
\end{eqnarray*}
and hence if $x\neq y$ and $(x+y)^2+(x+3)^2+(y+3)^2=14(=1^2+2^2+3^2)$,
we have 
$$
y=0, x=-1, -2; y= -1, x=0, -1; y=-2, x=0, -1,
$$ 
i.e.,  $b=0, a=-2, -3; b=1, a=-1, -2; b=2, a=-1, -2$.
This completes the proof. 
\end{proof}

\subsection{Classification results}

\begin{thm}\label{Classify-Blowup-p-P3}
Let $X$ be the blow-up of $\mathbb{P}^{3}$ at a point.
Then the normalized sequence 
\begin{equation}\label{basic-type-of-EFC-linebundle}
\{\mathcal{O}_{X},\mathcal{O}_{X}(D_1), \mathcal{O}_{X}(D_2),\mathcal{O}_{X}(D_3), \mathcal{O}_{X}(D_4), \mathcal{O}_{X}(D_5)\}
\end{equation}
is an exceptional collection of line bundles  
if and only if the ordered set of divisors $\{D_1, D_2, D_3, D_4, D_5\}$ is one of the following types:
\begin{enumerate}
\item[$(1)_a$] $\{H-E, 2H-2E,
                aH-(a-1)E, (a+1)H-aE, 
                 (a+2)H-(a+1)E \}$;
\item[$(2)_a$] $\{H-E, aH-(a-1)E, 
                (a+1)H-aE, (a+2)H-(a+1)E,  
                3H-E\}$;
\item[$(3)_a$]  $\{aH-(a-1)E, (a+1)H-aE, 
                 (a+2)H-(a+1)E, 2H,3H-E\}$;
\item[$(4)$] $\{H-E, H,
                2H-2E, 2H-E, 3H-2E\}$;
\item[$(5)$]  $\{E, H-E, 
                H, 2H-E,  3H-E\}$;
\item[$(6)$]  $\{H-2E, H-E, 
                 2H-2E, 3H-2E,4H-3E\}$;
\item[$(7)$] $\{E, H, 
                 2H, 3H-E,3H\}$;
\item[$(8)$] $\{H-E, 2H-E, 
                3H-2E, 3H-E,  4H-3E\}$;
\item[$(9)$] $\{H, 2H-E, 
                2H, 3H-2E,  3H-E\}$,         
\end{enumerate}
where $a\in \mathbb{Z}$.
Moreover, by mutations and normalizations, they are related as:
\begin{eqnarray*}
& & (1)_a\Rightarrow (2)_a\Rightarrow (3)_a \Rightarrow (1)_{4-a} \Rightarrow  (2)_{4-a} \Rightarrow (3)_{4-a} \Rightarrow (1)_{a}; \\
& & (4)\Rightarrow  (5)\Rightarrow  (6)\Rightarrow  (7)\Rightarrow  
(8) \Rightarrow (9)\Rightarrow (4).
\end{eqnarray*}
\end{thm}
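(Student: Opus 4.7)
The plan is to reduce the classification to a finite combinatorial enumeration. First, by Lemma~\ref{lem:exlb} applied to the normalized sequence (setting $D_0 := 0$), the collection is an exceptional collection if and only if $\mathcal{O}_X(D_j - D_i)$ is cohomologically zero for all $0 \le j < i \le 5$. Taking $j=0$ and invoking Proposition~\ref{Coh-zero-line-bundles-point}, each $D_i$ must lie in the set
$$
S \;=\; \ell_1 \;\cup\; \{H-E,\ H-2E,\ 2H,\ 2H-2E,\ 3H,\ 3H-E\},
$$
where $\ell_1 = \{kH - (k-1)E : k \in \mathbb{Z}\}$ is the negative of the cohomologically-zero family on the line $a+b=-1$.

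Next I would analyze pairwise differences inside $S$. The key computation is that for two points of $\ell_1$ with parameters $k_j, k_i$, the difference $D_j - D_i = (k_j - k_i)(H-E)$ is cohomologically zero if and only if $k_i - k_j \in \{1, 2\}$, matching only the isolated points $(-1,1)$ and $(-2,2)$ of Proposition~\ref{Coh-zero-line-bundles-point}. Hence at most three $D_i$ can lie on $\ell_1$, and when exactly three do, their parameters must be consecutive integers $a, a+1, a+2$ appearing in that order. I would then tabulate, for each isolated point $P \in S$ and each parameter $k$, whether $P - (kH-(k-1)E)$ and its negative are cohomologically zero, together with all pairwise differences among the six isolated points themselves.

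Armed with this table, the proof becomes a case analysis on $m \in \{0,1,2,3\}$, the number of $D_i$ lying on $\ell_1$. Since there are five slots and $m \le 3$, at least two $D_i$ are isolated points; because isolated-point differences are very restrictive (essentially only $\pm(H-2E)$ lies in both the cohomologically zero set $C$ and in $-C$), the enumeration terminates quickly and yields exactly the nine listed families, distinguished by where the $\ell_1$-divisors sit in the sequence (tail, middle, or head) and by which isolated points fill the remaining slots. Each candidate configuration is then verified to be exceptional by checking all $\binom{6}{2}=15$ pairwise differences against Proposition~\ref{Coh-zero-line-bundles-point}. The main obstacle is keeping this case analysis exhaustive yet non-redundant, since small values of $a$ cause the generic families $(1)_a,(2)_a,(3)_a$ to specialize into the sporadic families $(4)$--$(9)$.

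Finally, the cyclic mutation/normalization relations follow from Lemma~\ref{mutation-fullness-lem}: moving $\mathcal{O}_X$ to the end as $\mathcal{O}_X\otimes\omega_X^{\vee} = \mathcal{O}_X(4H - 2E)$ and renormalizing by tensoring with $\mathcal{O}_X(-D_1)$ converts the tuple $(D_1,\ldots,D_5)$ into $(D_2 - D_1, D_3 - D_1, D_4 - D_1, D_5 - D_1, 4H - 2E - D_1)$. Applying this explicit shift to each listed family and matching the result against the list traces out the cycles $(1)_a \Rightarrow (2)_a \Rightarrow (3)_a \Rightarrow (1)_{4-a}$ (up to reindexing in $a$) and $(4) \Rightarrow (5) \Rightarrow \cdots \Rightarrow (9) \Rightarrow (4)$, completing the classification.
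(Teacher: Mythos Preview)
Your proposal is correct and follows essentially the same strategy as the paper: reduce via Lemma~\ref{lem:exlb} and Proposition~\ref{Coh-zero-line-bundles-point} to a finite list of admissible $D_i$, tabulate which ordered pairs are exceptional, observe (your $\ell_1$ analysis, the paper's Claim~\ref{small-lem1}) that at most three $D_i$ can lie on the line $a+b=-1$ and then only with consecutive parameters, and finish by a short case enumeration; the mutation/normalization cycles are checked exactly as you describe. The only organizational difference is that the paper branches on the type of $D_1$ (whether it is $B_0$, $B_1=H-E$, or $B_2=H-2E$) rather than on your count $m$; one small correction: the sporadic families $(4)$--$(9)$ are not specializations of $(1)_a$--$(3)_a$ at small $a$ but genuinely different interleavings of the $\ell_1$-block with the isolated points (for instance $(4)$ and $(1)_1$ involve the same divisors in a different order), so your case analysis must track positions, not just multisets.
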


\begin{proof}
Write $D_0=0$. By Lemma \ref{lem:exlb},
the sequence \eqref{basic-type-of-EFC-linebundle}
is an exceptional collection if and only if for any integers $0\leq j<i\leq 5$ the line bundles $\mathcal{O}_{X}(D_j-D_i)$ are cohomologically zero.

Now suppose the sequence \eqref{basic-type-of-EFC-linebundle} is an exceptional collection.
First of all, by Proposition \ref{Coh-zero-line-bundles-point}, we notice that $\mathcal{O}_{X}(D_{i})$ must be one of the following line bundles:
$B_0=\mathcal{O}_{X}(aH-(a-1)E)$ ($\forall a \in \mathbb{Z}$),
$B_1=\mathcal{O}_{X}(H-E)$,
$B_2=\mathcal{O}_{X}(H-2E)$,
$B_3=\mathcal{O}_{X}(2H)$,
$B_4=\mathcal{O}_{X}(2H-2E)$,
$B_5=\mathcal{O}_{X}(3H)$,
$B_6=\mathcal{O}_{X}(3H-E)$.
Furthermore,  
to find out all the exceptional collections \eqref{basic-type-of-EFC-linebundle}, 
it suffices to pick up any five $\{B_{l_1},\cdots ,B_{l_5}\}$ such that each pair $(B_{l_j},B_{l_i})$ ($j<i$) is an exceptional pair.
In order to achieve this goal, we will build up a table of all exceptional pairs which consists of $B_i$ ($i=0,1,\cdots ,6$). 
Since a pair $(B_s, B_t)$ is an exceptional pair if and only if the line bundle  $B_s \otimes B_t^{\vee}$ is cohomologically zero, by Proposition \ref{Coh-zero-line-bundles-point} we have the following table:

\begin{table}[ht]
\caption{Exceptional pairs $(B_s, B_t)$ for blow-up of $\mathbb{P}^3$ at a point }\label{tab1:g}
\begin{center}
{\tiny
\begin{tabular}{|c| p{20mm}|c|c|c|c|c|c|}
\hline 
  & $B_0^{\prime}$ &$B_1$&$B_2$&$B_3$&$B_4$&$B_5$&$B_6$\\
\hline 
 $B_0$& $a^{\prime}=a+1$, $a+2$& $a=0$& &  $\forall a$& $a=1$&$a=0$, $1$&$\forall a$\\
\hline 
 $B_1$& $\forall a^\prime$&  & &  & $\surd$& &$\surd$\\
\hline 
$B_2$& $a^{\prime}=3$, $4$& $\surd$& & & $\surd$& & \\
\hline 
 $B_3$&$a^\prime=3$& & & &  &$\surd$&$\surd$\\
\hline
 $B_4$& $\forall a^{\prime}$&  & &  &  & & \\
\hline 
 $B_5$&  &  & &  & & & \\
\hline
 $B_6$& $a^{\prime}=4$&  & &  &  &$\surd$& \\
\hline 
\end{tabular}
}
\end{center}
\;
\;
\small{
In the table, the first column stand for $B_s$ and the first row stand for $B_t$, and the blank means that $(B_s,B_t)$ is not an exceptional pair; otherwise is.
For example, $(B_1, B_3)$ is not an exceptional pair, $(B_0, B_0')$ is an exceptional pair if and only if $a'=a+1$ or $a'=a+2$, and $(B_2, B_4)$ is an exceptional pair.}

\end{table}

Before we continue the proof,
we need the following.
 
\begin{claim}\label{small-lem1}
$\{\mathcal{O}_{X}, \mathcal{O}_{X}(a_1H+(1-a_1)E), \cdots, \mathcal{O}_{X}(a_i H+(1-a_i )E)\}$ is an exceptional collection if and only if one of the following three conditions holds:
\begin{enumerate}
\item[(1)] $i=1$, $a_1\in \mathbb{Z}$;

\item[(2)] $i=2$, $a_1=a_2-1$ or $a_1=a_2-2$;

\item[(3)] $i=3$, $a_1+1=a_2=a_3-1$.
\end{enumerate}
\end{claim}

\begin{proof}[Proof of Claim \ref{small-lem1} ]
The pair $\{\mathcal{O}_{X}(aH-(a-1)E), \mathcal{O}_{X}(bH-(b-1)E)\}$ is an exception collection if and only if $\mathcal{O}_{X}((a-b)H-(a-b)E)$ is cohomologically zero. 
Then, by Proposition \ref{Coh-zero-line-bundles-point} (2) and (5), 
we get $a-b=-1$ or $a-b=-2$ and the lemma follows.
\end{proof}

From Claim \ref{small-lem1} and Table \ref{tab1:g}, 
we observe that the line bundle $\mathcal{O}_{X}(D_1)$ in the sequence \eqref{basic-type-of-EFC-linebundle} may be only one of $B_0$, $B_1$ and $B_2$.
To finish the proof,
we shall discuss $\mathcal{O}_{X}(D_1)$ case-by-case:

\begin{enumerate}
\item[(I)] Suppose $\mathcal{O}_{X}(D_1)=B_0$.
From Table \ref{tab1:g},  for $t>0$, the exceptional pairs of type $(B_0, B_t)$ are: $(B_0, B_1)$, $(B_0,B_3)$, $(B_0, B_4)$, $(B_0, B_5)$ and $(B_0, B_6)$; then for $t_1, t_2>0$, the exceptional collections of type $\{\mathcal{O}_{X},B_0, B_{t_1}, B_{t_2}\}$ are : \underline{$\{\mathcal{O}_{X}, B_0, B_1, B_6\}$}, \underline{$\{\mathcal{O}_{X}, B_0, B_3, B_5\}$}, \underline{$\{\mathcal{O}_{X}, B_0, B_3, B_6\}$}, $\{\mathcal{O}_{X}, B_0, B_6, B_5\}$; then for $t_1, t_2, t_3>0$, the only exceptional collection of type $\{\mathcal{O}_{X}, B_0$, $B_{t_1}, B_{t_2}, B_{t_3}\}$ is :  \underline{$\{\mathcal{O}_{X}, B_0, B_3, B_6, B_5\}$}.
Then we add more divisors of type $B_0$ into the above five exceptional collections to obtain exceptional collections of length 6:
\begin{enumerate}
\item For $\{\mathcal{O}_{X},B_0, B_1,B_6\}$,  we need to add two divisors of type $B_0$, 
then we have \underline{$\{\mathcal{O}_{X},B_0, B_1$}, \underline{$B_0', B_0'', B_6\}$} (i.e., $(5)$ in Theorem \ref{Classify-Blowup-p-P3});
\item For $\{\mathcal{O}_{X}, B_0, B_3, B_5\}$, we need to add two divisors of type $B_0$, which is impossible by Table \ref{tab1:g};  

\item For $\{\mathcal{O}_{X}, B_0, B_3, B_6\}$, we need to add two divisors of type $B_0$, then we have two cases: \underline{$\{\mathcal{O}_{X}, B_0, B_0', B_3, B_0'', B_6\}$} (i.e., $(9)$ in Theorem \ref{Classify-Blowup-p-P3}), and \underline{$\{\mathcal{O}_{X}, B_0, B_0', B_0'', B_3, B_6\}$} (i.e., $(3)$ in Theorem \ref{Classify-Blowup-p-P3});

\item For $\{\mathcal{O}_{X}, B_0, B_6, B_5\}$, we need to add two divisors of type $B_0$, which is impossible by Table \ref{tab1:g};  
\item For $\{\mathcal{O}_{X}, B_0, B_3, B_6, B_5\}$, we need to add one divisor of type $B_0$, then we have \underline{$\{\mathcal{O}_{X}, B_0, B_0'$}, \underline{$B_3, B_6, B_5\}$} (i.e., $(7)$ in Theorem \ref{Classify-Blowup-p-P3}).
\end{enumerate}

\item[(II)] Suppose $\mathcal{O}_{X}(D_1)=B_1$. From Table \ref{tab1:g},  for $t> 0$, the exceptional pairs of type $(B_1, B_t)$ are: $(B_1,B_4)$, $(B_1, B_6)$; then for $t_1, t_2>0$, there are no exceptional collections of type $(B_1, B_{t_1}, B_{t_2})$.
Then we need to add divisors of type $B_0$ into the following two exceptional collections of length $3$:
\begin{enumerate}
\item  For  $(\mathcal{O}_{X}, B_1,B_4)$, we need to add  three divisors of type $B_0$, then we have two cases:  \underline{$\{\mathcal{O}_{X},B_1, B_0, B_4, B_0',B_0''\}$} (i.e., $(4)$ in Theorem \ref{Classify-Blowup-p-P3}), 
         and \underline{$\{\mathcal{O}_{X},B_1, B_4, B_0, B_0',B_0''\}$} (i.e., $(1)$ in Theorem \ref{Classify-Blowup-p-P3});
\item For  $(\mathcal{O}_{X}, B_1,B_6)$, we need to add  three divisors of type $B_0$,
         then there are also two cases: \underline{$\{\mathcal{O}_{X},B_1, B_0, B_0',B_0'', B_6\}$} (i.e., $(2)$ in Theorem \ref{Classify-Blowup-p-P3}),  
          and  \underline{$\{\mathcal{O}_{X},B_1, B_0, B_0', B_6, B_0''\}$} (i.e., $(8)$ in Theorem \ref{Classify-Blowup-p-P3}).
\end{enumerate}

\item[(III)] Suppose $\mathcal{O}_{X}(D_1)=B_2$.
From Table \ref{tab1:g}, there are at most two divisors of type $B_0$ after $D_1$ in the sequence \eqref{basic-type-of-EFC-linebundle}. On the other hand,  for $t_1, t_2>0$, there is only one exceptional collection of type $(B_2, B_{t_1}, B_{t_2})$: $(B_2, B_1, B_4)$. Thus, we get only one  exceptional collection of length 6: \underline{$\{\mathcal{O}_{X},B_2, B_1,B_4, B_0, B_0'\}$} (i.e., $(6)$ in Theorem \ref{Classify-Blowup-p-P3}).
\end{enumerate}

Recall that $\omega_X^{\vee}=\mathcal{O}_X(4H-2E)$. To check the two types of relations: 
\begin{eqnarray*}
& & (1)_a\Rightarrow (2)_a\Rightarrow (3)_a \Rightarrow (1)_{4-a} \Rightarrow  (2)_{4-a} \Rightarrow (3)_{4-a} \Rightarrow (1)_{a}; \\
& & (4)\Rightarrow  (5)\Rightarrow  (6)\Rightarrow  (7)\Rightarrow  
(8) \Rightarrow (9)\Rightarrow (4).
\end{eqnarray*}
we just use mutations (Lemma \ref{mutation-fullness-lem}) and normalizations (Lemma \ref{normalization-lem}) repeatedly. 

This completes the proof of Theorem \ref{Classify-Blowup-p-P3}.
\end{proof}

Now we are ready to give the proof of the main result of  this section.

\begin{thm}\label{main-thm-point}
Let $X$ be the blow-up of $\mathbb{P}^{3}$ at a point.
Then any exceptional collection of line bundles of length $6$ on $X$ is full.
\end{thm}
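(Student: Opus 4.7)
The plan is to leverage the classification already established in Theorem \ref{Classify-Blowup-p-P3}. By Lemma \ref{normalization-lem}, every exceptional collection of line bundles of length $6$ on $X$ is equivalent after normalization to one of the nine normalized types $(1)_a,(2)_a,(3)_a,(4),\ldots,(9)$. Since normalizations and mutations both preserve fullness (Lemmas \ref{normalization-lem} and \ref{mutation-fullness-lem}), and Theorem \ref{Classify-Blowup-p-P3} already organizes these nine types into two cyclic orbits under those operations---namely $\{(1)_a,(2)_a,(3)_a,(1)_{4-a},(2)_{4-a},(3)_{4-a}\}$ and $\{(4),(5),(6),(7),(8),(9)\}$---it suffices to exhibit one full representative in each orbit.

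For the first orbit I would invoke Orlov's projective bundle formula (Theorem \ref{Orlov-projbundle-formula}). Under the projective bundle structure $X\cong\mathbb{P}(\mathcal{O}_{\mathbb{P}^2}(1-a)\oplus\mathcal{O}_{\mathbb{P}^2}(-a))\to\mathbb{P}^2$ one has $\rho^{\ast}\mathcal{O}_{\mathbb{P}^2}(1)=\mathcal{O}_X(H-E)$ and $\mathcal{O}_X(1)=\mathcal{O}_X(aH-(a-1)E)$. Combining Orlov's formula with Beilinson's full exceptional collection $\{\mathcal{O},\mathcal{O}(1),\mathcal{O}(2)\}$ on $\mathbb{P}^2$ produces exactly the type $(1)_a$ collection, which is therefore full for every $a\in\mathbb{Z}$. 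This handles the entire first orbit.

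For the second orbit I would apply the higher-dimensional augmentation (Theorem \ref{blowup-point-fullness-lemma}) to the Beilinson full exceptional collection $\{\mathcal{O}_{\mathbb{P}^3}, \mathcal{O}_{\mathbb{P}^3}(H), \mathcal{O}_{\mathbb{P}^3}(2H), \mathcal{O}_{\mathbb{P}^3}(3H)\}$ on $\mathbb{P}^3$, taking $n=3$ and $i=2$. The augmentation output is
\[
\{\mathcal{O}_X(E),\mathcal{O}_X(2E),\mathcal{O}_X(H),\mathcal{O}_X(H+E),\mathcal{O}_X(2H),\mathcal{O}_X(3H)\},
\]
which is full by Theorem \ref{blowup-point-fullness-lemma}. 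Normalizing by $\mathcal{O}_X(-E)$ transforms it into
\[
\{\mathcal{O}_X,\mathcal{O}_X(E),\mathcal{O}_X(H-E),\mathcal{O}_X(H),\mathcal{O}_X(2H-E),\mathcal{O}_X(3H-E)\},
\]
which is precisely type $(5)$ from Theorem \ref{Classify-Blowup-p-P3}. Hence $(5)$ is full, and the cycle $(4)\Rightarrow(5)\Rightarrow(6)\Rightarrow(7)\Rightarrow(8)\Rightarrow(9)\Rightarrow(4)$ propagates fullness to the rest of the second orbit.

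The bulk of the substantive work is already done upstream: the cohomological classification, the enumeration of normalized types, and the higher-dimensional augmentation theorem are the real input, after which the argument is little more than bookkeeping. The subtle step---the one I would be most worried about---is the match between the augmented collection and a listed type; the collections $(4)$--$(9)$ are exactly the ``non-Orlov'' exceptional collections for which a pullback-of-Beilinson-on-$\mathbb{P}^2$ construction does not suffice, and had no augmentation output landed in this orbit one would be forced to argue fullness by an ad hoc sequence of mutations connecting each of $(4)$--$(9)$ back to some type $(1)_a$.
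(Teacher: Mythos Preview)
Your proposal is correct and follows essentially the same approach as the paper: reduce via Lemma~\ref{normalization-lem} and Lemma~\ref{mutation-fullness-lem} to one representative per mutation orbit, then use Orlov's projective bundle formula for the first orbit and the higher-dimensional augmentation (Theorem~\ref{blowup-point-fullness-lemma}) for the second. The only difference is cosmetic---the paper applies the augmentation with $i=3$ and normalizes by $\mathcal{O}_X(-2E)$ to land on type~$(4)$, whereas you take $i=2$ and normalize by $\mathcal{O}_X(-E)$ to land on type~$(5)$; both lie in the same orbit, so either choice works.
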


\begin{proof}

According to Lemma \ref{normalization-lem}, 
to prove Theorem \ref{main-thm-point}, 
it suffices to show that any exceptional collection of line bundles of length $6$ 
in Theorem \ref{Classify-Blowup-p-P3} is full.
By Lemma \ref{mutation-fullness-lem}, 
it suffices to show that the exceptional collections of type $(1)$ and $(4)$ in Theorem \ref{Classify-Blowup-p-P3} are full.

(i) To prove the fullness of type $(4)$ in Theorem \ref{Classify-Blowup-p-P3}, 
we will use the Beilinson's semiorthogonal decomposition of $\mathrm{D}(\mathbb{P}^3)$,
$$
\langle \mathcal{O}_{\mathbb{P}^3}, \mathcal{O}_{\mathbb{P}^3}(H), 
\mathcal{O}_{\mathbb{P}^3}(2H), \mathcal{O}_{\mathbb{P}^3}(3H)\rangle.
$$
By Theorem \ref{blowup-point-fullness-lemma}, 
we obtain a semiorthogonal decomposition of $\mathrm{D}(X)$,
$$
\langle \mathcal{O}_{X}(2E), \mathcal{O}_{X}(H+E), \mathcal{O}_{X}(H+2E), \mathcal{O}_{X}(2H), \mathcal{O}_{X}(2H+E), \mathcal{O}_{X}(3H)\rangle.
$$
Then, by Lemma \ref{normalization-lem}, this turns into the case $(4)$ in Theorem \ref{Classify-Blowup-p-P3}, and hence
$$
\mathrm{D}(X)
=\langle 
\mathcal{O}_{X}, \mathcal{O}_{X}(H-E), \mathcal{O}_{X}(H), \mathcal{O}_{X}(2H-2E), 
\mathcal{O}_{X}(2H-E), \mathcal{O}_{X}(3H-2E)
\rangle.
$$

(ii) To show the fullness of type $(1)_a$ in Theorem \ref{Classify-Blowup-p-P3}, we shall use the projective bundle structure of $X$ as (\ref{eq:projective bundle 1}). 
By Orlov's projective bundle formula (Theorem \ref{Orlov-projbundle-formula}), 
we have the following semiorthogonal decompositions
\begin{eqnarray*}
\mathrm{D}(X)
&=&\langle \rho^{\ast}\mathrm{D}(\mathbb{P}^2), 
\rho^{\ast}\mathrm{D}(\mathbb{P}^2)\otimes \mathcal{O}_X(1) \rangle\\
&=& \langle 
\rho^{\ast}\mathcal{O}_{\mathbb{P}^2}, \rho^{\ast}\mathcal{O}_{\mathbb{P}^2}(1), 
\rho^{\ast}\mathcal{O}_{\mathbb{P}^2}(2),\\
&&\;\;\;\;  \rho^{\ast}\mathcal{O}_{\mathbb{P}^2}\otimes \mathcal{O}_X(1),  
\rho^{\ast}\mathcal{O}_{\mathbb{P}^2}(1)\otimes \mathcal{O}_X(1), 
\rho^{\ast}\mathcal{O}_{\mathbb{P}^2}(2)\otimes \mathcal{O}_X(1)\rangle \\
&=& \langle  
\mathcal{O}_{X},  \mathcal{O}_{X}(H-E), \mathcal{O}_{X}(2H-2E),
 \mathcal{O}_{X}\otimes \mathcal{O}_{X}(aH-(a-1)E),  \\
 && \;\;\;\; \mathcal{O}_{X}(H-E)\otimes \mathcal{O}_{X}(aH-(a-1)E), 
\mathcal{O}_{X}(2H-2E)\otimes \mathcal{O}_{X}(aH-(a-1)E)\rangle \\
&=&  \langle 
\mathcal{O}_{X}, \mathcal{O}_{X}(H-E), \mathcal{O}_{X}(2H-2E),
\mathcal{O}_{X}(aH-(a-1)E), \\
&&\;\;\;\;  \mathcal{O}_{X}((a+1)H-aE),\mathcal{O}_{X}((a+2)H-(a+1)E) \rangle. 
\end{eqnarray*}
This completes the proof of  Theorem \ref{main-thm-point}.
\end{proof}


\section{Blow-up a line in $\mathbb{P}^3$}
\label{blow-up-one-line}

In this section, we will classify the cohomologically zero line bundles 
and the exceptional collection of line bundles of length $6$ on the blow-up of $\mathbb{P}^3$ at a line and show they are full.
\subsection{Geometry of $X$}
Let $\pi: X\to \mathbb{P}^3$ be the blow-up of $\mathbb{P}^3$ 
at a line $\mathbb{P}^1 \cong \mathbb{L} \subset \mathbb{P}^3$.
The exceptional divisor of $\pi$ is
$E\cong\mathbb{P}(\mathcal{N}_{\mathbb{P}^1/ \mathbb{P}^3})\cong\mathbb{P}^{1}\times \mathbb{P}^{1}$.
Then $X$ is a toric smooth Fano threefold with the canonical divisor
$$
K_{X}=\pi^{\ast}K_{\mathbb{P}^3}+E=-4H+E,
$$
where $H$ is the pullback of hyperplane class in $\mathbb{P}^3$.
The Picard group of $X$ is 
$$
\mathrm{Pic}(X)\cong \mathrm{Pic}(\mathbb{P}^3)\oplus \mathbb{Z}[E]=\mathbb{Z}[H]\oplus \mathbb{Z}[E]
$$
with intersection numbers
$$
H^3=1, H^2E=0, HE^2=-1, E^3=-2,
$$
and we may assume $\mathcal{O}_{E}(E)\cong \mathcal{O}_{E}(-S+F)$ (note that $(aS+bF)^2=-2$ implies $aS+bF=\pm (-S+F)$),  
$\mathcal{O}_{E}(H)\cong \mathcal{O}_{E}(F)$, where $S$ and $F$ are given in Example \ref{cohom-zero-F_1}.

Let $a$ be an integer. Then $X$ is also the projective bundle
\begin{equation}\label{eq:projective bundle 2}
X\cong \mathbb{P}(\mathcal{V}) 
\stackrel{\rho}\to \mathbb{P}^{1}, \text{ with }
\mathcal{V}:=\mathcal{O}_{\mathbb{P}^{1}}(-a+1)^{\oplus 2} \oplus \mathcal{O}_{\mathbb{P}^1}(-a).
\end{equation}
Since $\rho^{\ast}\mathcal{O}_{\mathbb{P}^{1}}(1)=[H-E]$ and
$
K_X=\rho^{\ast}(K_{\mathbb{P}^{1}}+\det(\mathcal{V}^{\vee}))
         \otimes \mathcal{O}_{X}(-3),
$
we have 
$$\mathcal{O}_X(1)=\mathcal{O}_X(aH-(a-1)E).$$
\subsection{Cohomologically zero line bundles}

\begin{lem}\label{Char-H0-H3-line}
$H^{0}(\mathcal{O}_{X}(aH+bE))=0$ if and only if $a<0$ or $a+b<0$.
Consequently, $H^{3}(\mathcal{O}_{X}(aH+bE))=0$ if and only if $a>-4$ or $a+b>-3$.
\end{lem}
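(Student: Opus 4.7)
The proof will parallel Lemma~\ref{Char-H0-H3-point} (the point blow-up case) almost verbatim, with the only real difference being that the intersection numbers are now $H^{3}=1$, $H^{2}E=0$, $HE^{2}=-1$, $E^{3}=-2$. The plan is to reduce the $H^{0}$ statement to a characterization of effective divisor classes on $X$, and then deduce the $H^{3}$ statement by Serre duality.

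\textbf{Step 1: $H^{0}(\mathcal{O}_{X}(aH+bE))\neq 0$ if and only if $a\geq 0$ and $a+b\geq 0$.} The forward direction is a standard nef-intersection argument. The class $H$ is nef (it is the pullback of a hyperplane of $\mathbb{P}^{3}$), and the class $H-E$ is nef, being the pullback of $\mathcal{O}_{\mathbb{P}^{1}}(1)$ under the projective bundle map $\rho:X\to\mathbb{P}^{1}$ of \eqref{eq:projective bundle 2} (in particular base-point free). If $aH+bE$ is effective, intersecting with the nef classes $H^{2}$ and $H\cdot(H-E)$ gives
\[
H^{2}\cdot(aH+bE)=aH^{3}+bH^{2}E=a\geq 0,
\]
\[
H\cdot(H-E)\cdot(aH+bE)=a(H^{3}-H^{2}E)+b(H^{2}E-HE^{2})=a+b\geq 0.
\]
Note that, in contrast to the point blow-up, the ``square'' $(H-E)^{2}$ happens to vanish against every divisor (since $H-E$ comes from the base $\mathbb{P}^{1}$), so the mixed product $H\cdot(H-E)$ must play the role that $(H-E)^{2}$ played in Lemma~\ref{Char-H0-H3-point}. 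Conversely, if $a\geq 0$ and $a+b\geq 0$, write $aH+bE=aH+bE$ when $b\geq 0$, and $aH+bE=(a+b)H+(-b)(H-E)$ when $b<0$; either way $aH+bE$ is expressed as a nonnegative combination of effective classes, hence is effective.

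\textbf{Step 2: Serre duality for $H^{3}$.} Using $K_{X}=-4H+E$, Serre duality gives
\[
H^{3}(\mathcal{O}_{X}(aH+bE))\cong H^{0}(\mathcal{O}_{X}(K_{X}-aH-bE))^{\vee}=H^{0}(\mathcal{O}_{X}((-4-a)H+(1-b)E))^{\vee}.
\]
Applying Step 1 to the divisor $(-4-a)H+(1-b)E$, this group vanishes precisely when $-4-a<0$ or $(-4-a)+(1-b)<0$, that is, when $a>-4$ or $a+b>-3$, which is the claimed criterion.

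I do not expect any genuine obstacle; the main thing to get right is the nef class producing the $a+b\geq 0$ inequality, where the computation differs from the point case because $HE^{2}=-1\neq 0$ and so $(H-E)^{2}$ no longer pairs nontrivially with anything. Once the correct nef curve class $H\cdot(H-E)$ is used, the rest of the argument is formally identical to the proof of Lemma~\ref{Char-H0-H3-point}.
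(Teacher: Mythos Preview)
Your proof is correct and follows essentially the same approach as the paper: both use the nef classes $H$ and $H-E$ to get $a\geq 0$ and $a+b\geq 0$ from effectivity via the intersection $H\cdot(H-E)\cdot(aH+bE)$, and both deduce the $H^{3}$ statement by Serre duality with $K_{X}=-4H+E$. Your additional remark explaining why $(H-E)^{2}$ no longer works (it is pulled back from the one-dimensional base, hence squares to zero) and your explicit verification of the converse direction are helpful clarifications but do not change the underlying argument.
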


\begin{proof}
Similar to Lemma \ref{Char-H0-H3-point},
it suffices to show that if $aH+bE$ ($a, b\in \mathbb{Z}$) is an effective divisor, 
then $a\geq 0$ and $a+b\geq0$.
Suppose $aH+bE$ is an effective divisor, $a, b\in \mathbb{Z}$.
Since $H$ is a nef divisor,
then the intersection number
$$
H^{2}(aH+bE)=a\geq 0.
$$
Since $H, H-E$ are base-point free and hence are nef divisors,
and then
$$
H(H-E)(aH+bE)=a+b\geq 0.
$$

By Serre duality,
we have
$$
H^{3}(\mathcal{O}_{X}(aH+bE))\cong H^{0}(\mathcal{O}_{X}((-4-a)H-(b-1)E)).
$$
It follows the proposition.
\end{proof}

Let $P\cong \mathbb{P}^2$ be the proper transform of a plane containing the line 
$\mathbb{L}$.
Then $\mathcal{O}_{X}(P)\cong\mathcal{O}_{X}(H-E)$.

\begin{lem}\label{divisor-P-lem}
$H|_{P}=\mathcal{O}_{P}(H)\cong \mathcal{O}_{P}(1)$ 
and $E|_{P}=\mathcal{O}_{P}(H)\cong \mathcal{O}_{P}(1)$.
\end{lem}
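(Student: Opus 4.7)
The plan is to exploit the identification of $P$ with the plane $\Pi \subset \mathbb{P}^{3}$ it is born from, and then read off the two restricted line bundles by a pullback for $H|_{P}$ and a single intersection-number computation for $E|_{P}$.

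First I would verify that $\pi|_{P}\colon P \to \Pi$ is an isomorphism. The map $\pi$ is already an isomorphism away from $E$, so the issue is local along $\mathbb{L}$. For every point $p \in \mathbb{L}$ the fibre $\pi^{-1}(p) \cong \mathbb{P}(\mathcal{N}_{\mathbb{L}/\mathbb{P}^{3},p}) \cong \mathbb{P}^{1}$ meets the proper transform $P$ in the single point determined by the tangent direction of $\Pi$ at $p$, so $\pi|_{P}$ is a bijective birational morphism between smooth projective surfaces, hence an isomorphism. In particular $P \cong \mathbb{P}^{2}$; and since $\Pi$ is smooth with multiplicity one along $\mathbb{L}$, one has $\pi^{*}\Pi = P + E$ as divisors, which is consistent with the class $\mathcal{O}_{X}(P) \cong \mathcal{O}_{X}(H-E)$ already recorded.

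With this identification the computation of $H|_{P}$ is immediate: since $H = \pi^{*}H_{\mathbb{P}^{3}}$ and $H_{\mathbb{P}^{3}}|_{\Pi} \cong \mathcal{O}_{\Pi}(1)$, one has $H|_{P} = (\pi|_{P})^{*}\mathcal{O}_{\Pi}(1) \cong \mathcal{O}_{P}(1)$ under the isomorphism.

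For $E|_{P}$ I would argue by degree. Since $\mathrm{Pic}(P) \cong \mathbb{Z}$ is generated by $\mathcal{O}_{P}(1) = H|_{P}$, it suffices to compute
\[
\deg(E|_{P}) \;=\; (E|_{P}) \cdot (H|_{P}) \;=\; E \cdot H \cdot P \;=\; E \cdot H \cdot (H-E) \;=\; H^{2}E - HE^{2} \;=\; 0 - (-1) \;=\; 1,
\]
using the intersection numbers $H^{2}E = 0$ and $HE^{2} = -1$ on $X$ recorded in the subsection on the geometry of $X$. This forces $E|_{P} \cong \mathcal{O}_{P}(1)$. The only mildly nontrivial point in the whole argument is the verification that $\pi|_{P}$ is an isomorphism together with the proper-transform identity $\pi^{*}\Pi = P + E$; once these are in hand, both restrictions drop out of a one-line pullback and a one-line intersection calculation.
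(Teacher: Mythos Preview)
Your argument is correct. The computation of $E|_{P}$ via the intersection number $E\cdot H\cdot(H-E)=1$ is exactly the paper's proof. For $H|_{P}$, however, you take a different route: you explicitly verify that $\pi|_{P}\colon P\to\Pi$ is an isomorphism and then pull back $\mathcal{O}_{\Pi}(1)$, whereas the paper avoids this geometric step and instead argues that nefness of $H$ forces $H|_{P}\cong\mathcal{O}_{P}(k)$ with $k\ge 0$, and then computes $k^{2}=(H|_{P})^{2}=H^{2}(H-E)=1$ to conclude $k=1$. Your approach has the virtue of actually justifying the identification $P\cong\mathbb{P}^{2}$ (which the paper simply states), while the paper's approach is a pure intersection-number calculation that never needs to unwind the blow-up geometry.
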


\begin{proof}
Since the divisor $H$ is nef, 
then the line bundle $\mathcal{O}_{P}(H)$ is also nef. 
Thus $\mathcal{O}_{P}(H)\cong \mathcal{O}_{P}(k)$ for some $k\geq 0$.
We obtain intersection numbers
$$
k^2=(H|_{P})^{2}=H^2(H-E)=H^3-H^2E=1,
$$
and hence $k=1$.

Suppose $E|_{P}\cong \mathcal{O}_{P}(k')$.
Then the intersection numbers
$$
k'=(E|_{P})(H|_{P})=EH(H-E)=1,
$$
and we have $k'=1$.
\end{proof}

 Analogous to Lemma \ref{H1H2-zero-lem-point},
 we have the following result.

\begin{lem}\label{H1H2-zero-lem-line}
For any $a, b\in \mathbb{Z}$, 
$h^{1}(\mathcal{O}_{X}(aH+bE))h^{2}(\mathcal{O}_{X}(aH+bE))=0$.
\end{lem}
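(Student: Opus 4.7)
The plan is to adapt the strategy of Lemma~\ref{H1H2-zero-lem-point}, but to replace the role of the exceptional divisor $E$ (which, in the line-blow-up case, is $\mathbb{P}^{1}\times\mathbb{P}^{1}$ and carries many line bundles with nonzero $H^{1}$) by the divisor $P\sim H-E$ introduced just before the statement. By Lemma~\ref{divisor-P-lem}, $P\cong\mathbb{P}^{2}$ and $H|_{P}=E|_{P}=\mathcal{O}_{P}(1)$, so the restriction is the clean expression $\mathcal{O}_{X}(aH+bE)|_{P}\cong\mathcal{O}_{\mathbb{P}^{2}}(a+b)$. The crucial feature is that $H^{1}(\mathbb{P}^{2},\mathcal{O}(n))=0$ for every $n\in\mathbb{Z}$.

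First, I would tensor the short exact sequence $0\to\mathcal{O}_{X}(-P)\to\mathcal{O}_{X}\to\mathcal{O}_{P}\to 0$ with $\mathcal{O}_{X}(aH+bE)$ to obtain
$$0\to\mathcal{O}_{X}((a-1)H+(b+1)E)\to\mathcal{O}_{X}(aH+bE)\to\mathcal{O}_{\mathbb{P}^{2}}(a+b)\to 0.$$
The long exact cohomology sequence, together with the vanishing of $H^{1}(\mathbb{P}^{2},\mathcal{O}(a+b))$, immediately yields the two monotonicity inequalities
$$h^{1}(\mathcal{O}_{X}(aH+bE))\le h^{1}(\mathcal{O}_{X}((a-1)H+(b+1)E)),$$
$$h^{2}(\mathcal{O}_{X}((a-1)H+(b+1)E))\le h^{2}(\mathcal{O}_{X}(aH+bE)).$$

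Next, I would iterate these inequalities along the anti-diagonal $a+b=\mathrm{const}$ to reduce to the base case $\mathcal{O}_{X}(cH)=\pi^{\ast}\mathcal{O}_{\mathbb{P}^{3}}(c)$, whose $H^{1}$ and $H^{2}$ both vanish from the cohomology of $\mathbb{P}^{3}$. Concretely, if $b\le 0$, applying the first inequality $|b|$ times gives $h^{1}(\mathcal{O}_{X}(aH+bE))\le h^{1}(\mathcal{O}_{\mathbb{P}^{3}}(a+b))=0$; symmetrically, if $b\ge 0$, applying the second inequality $b$ times gives $h^{2}(\mathcal{O}_{X}(aH+bE))\le h^{2}(\mathcal{O}_{\mathbb{P}^{3}}(a+b))=0$. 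In either case, at least one of $h^{1}$ and $h^{2}$ vanishes, so their product is zero.

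The main obstacle, compared with the point-blow-up case, is the choice of divisor. Restricting to $E$ no longer affords uniform vanishing of $H^{1}$: the restriction $\mathcal{O}_{E}(-bS+(a+b)F)$ on $\mathbb{P}^{1}\times\mathbb{P}^{1}$ has $H^{1}\neq 0$ whenever the two twists fall into opposite \emph{unbalanced} ranges, so the naive analog of the point-case argument breaks down. The key observation is that a fiber of the projective bundle $\rho\colon X\to\mathbb{P}^{1}$ is isomorphic to $\mathbb{P}^{2}$ on which $H^{1}$ of every line bundle vanishes, and this fiber, realized as the effective divisor $P\sim H-E$, plays exactly the role that $E$ played in Lemma~\ref{H1H2-zero-lem-point}. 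Once this divisor is identified, the two-sided sandwich argument above proceeds routinely.
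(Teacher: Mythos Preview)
Your proof is correct and follows essentially the same approach as the paper: both arguments hinge on the divisor $P\sim H-E\cong\mathbb{P}^{2}$, the exact sequence obtained by tensoring $0\to\mathcal{O}_{X}(-P)\to\mathcal{O}_{X}\to\mathcal{O}_{P}\to 0$, and the key vanishing $H^{1}(\mathbb{P}^{2},\mathcal{O}(n))=0$. The only minor difference is that the paper handles the case $b\ge 1$ via Serre duality (reducing $H^{2}$ back to an $H^{1}$ already shown to vanish), whereas you read off the $h^{2}$-monotonicity directly from the same long exact sequence; both routes are equally short.
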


\begin{proof}
We first show that $H^{1}(\mathcal{O}_{X}(sP+tH))=0$ if $s\geq 0$.
From the short exact sequence
$$
0 \to \mathcal{O}_{X}(-P)\to \mathcal{O}_{X}\to \mathcal{O}_{P} \to 0,
$$
we tensor $\mathcal{O}_{X}(sP+tH)$ to obtain a short exact sequence
$$
0 \to \mathcal{O}_{X}((s-1)P+tH)\to \mathcal{O}_{X}(sP+tH)\to \mathcal{O}_{P}(sP+tH) \to 0.
$$
Taking cohomology,
we have a long exact sequence
$$
\cdots \to H^1(\mathcal{O}_{X}(s-1)P+tH))\to H^1(\mathcal{O}_{X}(sP+tH))\to H^{1}(\mathcal{O}_{P}(sP+tH)) \to \cdots.
$$
Since $P\cong \mathbb{P}^{2}$, by Lemma \ref{divisor-P-lem},
we obtain $H^{1}(\mathcal{O}_{P}(sP+tH))\cong H^{1}(\mathcal{O}_{P}(t))=0$ for $t\in \mathbb{Z}$.
Then we have the following inequalities
$$
0=h^{1}(\mathcal{O}_{X}(tH))
\geq h^{1}(\mathcal{O}_{X}(P+tH)) 
\geq \cdots 
\geq h^{1}(\mathcal{O}_{X}((s-1)P+tH))
\geq h^{1}(\mathcal{O}_{X}(sP+tH)),
$$
and hence $H^{1}(\mathcal{O}_{X}(sP+tH))=0$ for $s\geq0$ and $t\in \mathbb{Z}$.

Secondly, since $aH+bE=-b(H-E)+(a+b)H$ and if $-b\geq 0$, 
then $H^{1}(\mathcal{O}_{X}(aH+bE))=0$ for $b\leq 0$ and $a\in \mathbb{Z}$.
If $-b<0$, i.e., $b\geq 1$, 
by Serre duality, we have
\begin{eqnarray*}
H^{2}(\mathcal{O}_{X}(aH+bE))
&\cong& H^{1}(\mathcal{O}_{X}((-4-a)H-(b-1)E)) \\
&=& H^{1}(\mathcal{O}_{X}((b-1)(H-E)+(-3-a-b)H))=0.
\end{eqnarray*}
This completes the proof.
\end{proof}

Next, we give the characterization of cohomologically zero line bundles on the blow-up of $\mathbb{P}^3$ at a line.

\begin{prop}\label{Coh-zero-line-bundles-line}
A line bundle $\mathcal{O}_{X}(aH+bE)$ is cohomologically zero if and only if one of the following conditions hold:
\begin{enumerate}
\item[(1)]  $a+b=-1$;
\item[(2)]  $a+b=-2$;
\item[(3)]  $a=-1$, $b=1$;
\item[(4)]  $a=-3$, $b=0$.
\end{enumerate}
\end{prop}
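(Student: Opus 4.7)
The plan is to mimic the proof of Proposition~\ref{Coh-zero-line-bundles-point}. By Lemma~\ref{H1H2-zero-lem-line} we have $h^1 h^2 = 0$, so $\chi(\mathcal{O}_X(aH+bE)) = 0$ forces $h^1 = h^2 = 0$. Combined with Lemma~\ref{Char-H0-H3-line}, the line bundle $\mathcal{O}_X(aH+bE)$ is cohomologically zero if and only if: (i) $a<0$ or $a+b<0$; (ii) $a>-4$ or $a+b>-3$; and (iii) $\chi(\mathcal{O}_X(aH+bE))=0$.

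The next step is to make (iii) explicit by computing $\chi$ via Riemann--Roch on the threefold $X$. The needed ingredients are $c_1(X)=4H-E$, the intersection numbers $H^3=1$, $H^2E=0$, $HE^2=-1$, $E^3=-2$, and the two numbers $H\cdot c_2(X)$ and $E\cdot c_2(X)$. Rather than appealing to a general blow-up formula for Chern classes, I would pin down these two numbers by comparing Riemann--Roch against two easy values: $\chi(\mathcal{O}_X(H)) = \chi(\mathcal{O}_{\mathbb{P}^3}(1)) = 4$ (by the projection formula and $R^i\pi_\ast \mathcal{O}_X = 0$ for $i>0$), and $\chi(\mathcal{O}_X(E)) = 1$, the latter being immediate from $0\to \mathcal{O}_X\to \mathcal{O}_X(E)\to \mathcal{O}_E(E)\to 0$ together with $\mathcal{O}_E(E)\cong \mathcal{O}_E(-S+F)$ being cohomologically zero on $\mathbb{P}^1\times\mathbb{P}^1$ by Example~\ref{cohom-zero-F_1}.

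The outcome I expect is a clean factorization
\begin{equation*}
\chi(\mathcal{O}_X(aH+bE)) \;=\; \frac{(a+b+1)(a+b+2)(a-2b+3)}{6},
\end{equation*}
so condition (iii) holds exactly when $a+b=-1$, $a+b=-2$, or $a=2b-3$. The first two families automatically satisfy constraints (i) and (ii), recovering cases (1) and (2). For the arithmetic progression $a=2b-3$, constraint (i) forces $b\le 1$ and constraint (ii) forces $b\ge 0$, leaving only $b=0$ (case (4), $a=-3$) and $b=1$ (case (3), $a=-1$).

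The main potential obstacle is spotting and verifying the factorization of the Riemann--Roch cubic as a polynomial in $b$ (the three linear factors $a+b+1$, $a+b+2$, $a-2b+3$ can be hunted for by Vieta's formulas, after noting that $b=-a-1$ and $b=-a-2$ are visible roots for every $a$); once that factorization is in place, matching the three factors against the vanishing ranges from Lemma~\ref{Char-H0-H3-line} is a routine case analysis.
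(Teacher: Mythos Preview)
Your proposal is correct and follows essentially the same route as the paper: reduce to $\chi=0$ together with the $H^0/H^3$ vanishing constraints via Lemmas~\ref{Char-H0-H3-line} and~\ref{H1H2-zero-lem-line}, compute $\chi$ by Riemann--Roch, obtain the factorization $\tfrac{1}{6}(a+b+1)(a+b+2)(a-2b+3)$, and finish by the same case analysis. The only difference is cosmetic: the paper reads off $c_2(X)=7H^2-4HE$ from the blow-up formula for Chern classes, whereas you determine the two needed intersection numbers $H\!\cdot\! c_2$ and $E\!\cdot\! c_2$ by interpolating $\chi$ at a couple of known values.
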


\begin{proof}

By Blow-up formula of Chern classes, 
we have
$$
c_2(X)
=\pi^{\ast}(c_2(\mathbb{P}^3)+\mu_{\mathbb{P}^1})-\pi^{\ast}c_{1}(\mathbb{P}^3)E
=7H^2-4HE,
$$
where $\mu_{\mathbb{P}^1}=H^2$.
By Riemann-Roch formula \eqref{RR-formula}, 
we have
\begin{eqnarray*}
\chi(\mathcal{O}_{X}(aH+bE)) &=&  \frac{1}{6}(a^3+6a^2+11a+6-2b^3-3b^2+5b+3ab-3ab^2)\\
           &=& \frac{1}{6}(a-2b+3)(a+b+1)(a+b+2).
\end{eqnarray*}
Therefore, by Lemma \ref{Char-H0-H3-line} and Lemma \ref{H1H2-zero-lem-line},
$\mathcal{O}_{X}(aH+bE)$ is cohomologically zero 
if and only if $\chi(\mathcal{O}(aH+bE))=0$ and one of the following conditions hold:
\begin{enumerate}
    \item[(1)] $-4<a<0$;
    \item[(2)] $a<0$, $a+b>-3$;
    \item[(3)] $a+b<0$, $a>-4$;
    \item[(4)] $-3<a+b<0$.
\end{enumerate}
Then the proposition follows the case by case:

(I) If $-3<a+b<0$, i.e., $a+b=-1$ or $a+b=-2$, then $\chi(\mathcal{O}_{X}(aH+bE))=0$.

(II) We assume $(a+b+1)(a+b+2)\neq 0$ and $\chi(\mathcal{O}_{X}(aH+bE))=0$ (i.e., $a-2b+3=0$).
Then we obtain:
\begin{enumerate}
    \item[(i)] if $-4<a<0$, then $(a=-1, b=1)$ and $(a=-3, b=0)$;
    \item[(ii)] if $a<0$ and $a+b>-3$, then $a=-1, b=1$;
    \item[(iii)] if $a+b<0$ and $a>-4$, then $a=-3, b=0$;
\end{enumerate}
This proves the proposition.
\end{proof}

\subsection{Classification results}

\begin{thm}\label{Classify-Blowup-line-P3}
Let $X$ be the blow-up of $\mathbb{P}^{3}$ at a line.
Then the normalized sequence 
\begin{equation}\label{basic-type-of-EFC-linebundle2}
\{\mathcal{O}_{X},\mathcal{O}_{X}(D_1), \mathcal{O}_{X}(D_2),\mathcal{O}_{X}(D_3), \mathcal{O}_{X}(D_4), \mathcal{O}_{X}(D_5)\}
\end{equation}
is an exceptional collection of line bundles  
if and only if the ordered set of divisors $\{D_1, D_2, D_3, D_4, D_5\}$ is one of the following two types:
\begin{enumerate}
\item[$(1)_{a,b}$] $\{aH-(a-1)E, (a+1)H-aE,
               bH-(b-2)E, (b+1)H-(b-1)E, 
               3H\}$;
\item[$(2)_{a,b}$] $\{H-E,aH-(a-1)E, 
               (a+1)H-aE, bH-(b-2)E, 
               (b+1)H-(b-1)E\}$,
\end{enumerate}
where $a, b\in \mathbb{Z}$. Moreover, by mutations and normalizations, they are related as
$$
(1)_{a,b}\Rightarrow (2)_{b-a,3-a}\Rightarrow (1)_{b-a-1,2-a}.
$$
\end{thm}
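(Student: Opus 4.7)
The approach mirrors that of Theorem \ref{Classify-Blowup-p-P3}, but is streamlined by the shorter list in Proposition \ref{Coh-zero-line-bundles-line}. First I would invoke Lemma \ref{lem:exlb} (with the convention $D_0=0$) to reduce the problem to requiring each difference $\mathcal{O}_X(D_j - D_i)$ for $0\leq j<i\leq 5$ to be cohomologically zero. Applying Proposition \ref{Coh-zero-line-bundles-line} to the pairs $(\mathcal{O}_X, \mathcal{O}_X(D_i))$ with $i\geq 1$ forces each $D_i$ to lie in one of exactly four families: Type I, $aH-(a-1)E$ (any $a\in\mathbb{Z}$); Type II, $aH-(a-2)E$ (any $a\in\mathbb{Z}$); Type III, $H-E$; Type IV, $3H$.

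Next I would tabulate, for each ordered pair of types, the condition for the corresponding difference divisor to be cohomologically zero, again by a direct application of Proposition \ref{Coh-zero-line-bundles-line}. The resulting short list is: $(\mathrm{I}(a),\mathrm{I}(a'))$ is exceptional iff $a'=a+1$; $(\mathrm{II}(a),\mathrm{II}(a'))$ iff $a'=a+1$; the pairs $(\mathrm{I}(a),\mathrm{II}(b))$, $(\mathrm{III},\mathrm{I}(a))$, $(\mathrm{III},\mathrm{II}(a))$, $(\mathrm{I}(a),\mathrm{IV})$, $(\mathrm{II}(a),\mathrm{IV})$ are exceptional for all parameter values; and every other ordered pair, namely $(\mathrm{II},\mathrm{I})$, $(\mathrm{I},\mathrm{III})$, $(\mathrm{II},\mathrm{III})$, $(\mathrm{IV},\mathrm{I})$, $(\mathrm{IV},\mathrm{II})$, $(\mathrm{III},\mathrm{IV})$, $(\mathrm{IV},\mathrm{III})$, is never exceptional.

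These incidence rules impose a very rigid shape on any normalized exceptional string $(D_1,\ldots,D_5)$: Type III can only sit in position $1$; Type IV can only sit in position $5$; Types III and IV cannot coexist; no Type II term may precede any Type I term; and at most two Type I (resp. Type II) terms may appear, their parameters then being consecutive integers. Since $2+2<5$, exactly one of Type III or Type IV must occur. If Type IV occurs, the forced pattern is $\mathrm{I}(a),\mathrm{I}(a+1),\mathrm{II}(b),\mathrm{II}(b+1),3H$, which is family $(1)_{a,b}$; if Type III occurs, the forced pattern is $H-E,\mathrm{I}(a),\mathrm{I}(a+1),\mathrm{II}(b),\mathrm{II}(b+1)$, which is family $(2)_{a,b}$.

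Finally, the implications between families follow from Lemma \ref{mutation-fullness-lem} (with $\omega_X^\vee=\mathcal{O}_X(4H-E)$) combined with Lemma \ref{normalization-lem}. Mutating $\mathcal{O}_X$ past the whole of a $(1)_{a,b}$ sequence and renormalizing by $-(aH-(a-1)E)$ yields, after direct computation, a sequence of type $(2)_{b-a,\,3-a}$; the analogous operation on $(2)_{a,b}$ yields $(1)_{a-1,\,b-1}$, and composing gives the stated chain $(1)_{a,b}\Rightarrow (2)_{b-a,\,3-a}\Rightarrow (1)_{b-a-1,\,2-a}$. The main obstacle is organizational rather than technical: unlike the point-blow-up case where the seven cohomologically zero line bundles afford a finite exceptional-pair table, the Type I and Type II families here are infinite, so the structural constraints above must be extracted cleanly from parameter-dependent calculations rather than from a finite enumeration.
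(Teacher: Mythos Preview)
Your proposal is correct and follows essentially the same route as the paper: your Types I--IV are exactly the paper's $B_0,B_1,B_2,B_3$, your list of admissible ordered pairs reproduces the paper's Table~\ref{tab2:g}, and the structural constraints you extract (III only first, IV only last, III and IV mutually exclusive, I's before II's, at most two of each with consecutive parameters) are just a clean repackaging of the case split on $\mathcal{O}_X(D_1)$ that the paper performs. The mutation/normalization computation for the relations $(1)_{a,b}\Rightarrow(2)_{b-a,3-a}\Rightarrow(1)_{b-a-1,2-a}$ is likewise identical in substance.
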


\begin{proof}
The idea of the proof is analogous to that of Theorem \ref{Classify-Blowup-p-P3} and it is relatively easy. Write $D_0=0$. By Lemma \ref{lem:exlb},
the sequence \eqref{basic-type-of-EFC-linebundle2}
is an exceptional collection if and only if for any integers $0\leq j<i\leq 5$ the line bundles $\mathcal{O}_{X}(D_j-D_i)$ are cohomologically zero.

Suppose the sequence \eqref{basic-type-of-EFC-linebundle2} is an exceptional colection. According to Proposition \ref{Coh-zero-line-bundles-line},
for any $1 \leq i \leq 5$,
$\mathcal{O}_{X}(D_{i})$ must be one of the line bundles:
$B_0=\mathcal{O}_{X}(aH-(a-1)E)$,
$B_1=\mathcal{O}_{X}(bH-(b-2)E)$,
$B_2=\mathcal{O}_{X}(H-E)$,
and $B_3=\mathcal{O}_{X}(3H)$,
where $a, b\in \mathbb{Z}$.
To determine all the exceptional collections of line bundle of length $6$, 
analogously, by Proposition \ref{Coh-zero-line-bundles-line}
we have the following table of exceptional pairs $(B_s, B_t)$ which consists of $B_0, B_1, B_2$ and $B_3$:
\begin{table}[ht]
\caption{Exceptional pairs $(B_s, B_t)$ for blow-up of $\mathbb{P}^3$ at a line }\label{tab2:g}
\begin{center}
{\tiny
\begin{tabular}{|c| p{12mm}|c|c|c|c|}
\hline 
  & $B_0^{\prime}$ &$B_1'$&$B_2$&$B_3$\\
\hline 
 $B_0$& $a'=a+1$ & $\forall a, b'$& & $\forall a$ \\
\hline 
 $B_1$&  & $b'=b+1$  & & $\forall b$ \\
\hline 
$B_2$& $\forall a$  & $\forall b'$ &  &  \\
\hline 
 $B_3$& & & &   \\

\hline 
\end{tabular}
}
\end{center}
\end{table}

Consequently, $\mathcal{O}_{X}(D_1)$ may be one of $B_0$ and $B_2$, 
and we have the following two cases:
\begin{enumerate}
\item if $\mathcal{O}_{X}(D_1)=B_0$, then we get the exceptional collection: 
$\{\mathcal{O}_{X},  B_0, B_0', B_1, B_1', B_3\} $ (i.e., $(1)$ in Theorem \ref{Classify-Blowup-line-P3});

\item if $\mathcal{O}_{X}(D_1)=B_2$, then we obtain the exceptional collection:
$\{\mathcal{O}_{X}, B_2, B_0, B_0', B_1, B_1' \}$ (i.e., $(2)$ in Theorem \ref{Classify-Blowup-line-P3}).
\end{enumerate}

Recall that $\omega_X^{\vee}=\mathcal{O}_X(4H-E)$. To check the relations
$$
(1)_{a,b}\Rightarrow (2)_{b-a,3-a}\Rightarrow (1)_{b-a-1,2-a},
$$
we just use mutations (Lemma \ref{mutation-fullness-lem}) and normalizations (Lemma \ref{normalization-lem}) repeatedly. 
This completes the proof of Theorem \ref{Classify-Blowup-line-P3}.
\end{proof}

Now we are ready to prove the main result of this section.

\begin{thm}\label{main-thm-line}
Let $X$ be the blow-up of $\mathbb{P}^{3}$ at a line.
Then any exceptional collection of line bundles of length $6$ 
on $X$ is full.
\end{thm}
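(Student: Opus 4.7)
The strategy is to reduce to the explicit normal forms given in Theorem \ref{Classify-Blowup-line-P3} and then realize each one as a full exceptional collection obtained from Orlov's projective bundle formula. By Lemma \ref{normalization-lem}, any exceptional collection of line bundles of length $6$ may, after a global twist, be assumed to begin with $\mathcal{O}_X$; Theorem \ref{Classify-Blowup-line-P3} then asserts that such a normalized collection is either of type $(1)_{a,b}$ or of type $(2)_{a,b}$ for some $a,b\in\mathbb{Z}$.

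First I would reduce to the $(2)$-family alone. The arrow $(1)_{a,b}\Rightarrow (2)_{b-a,3-a}$ in Theorem \ref{Classify-Blowup-line-P3} is realized by a single Serre-functor mutation (Lemma \ref{mutation-fullness-lem}), which sends $\{\mathcal{O}_X,\ldots,3H\}$ to $\{aH-(a-1)E,\ldots,3H,4H-E\}$, followed by the normalization (Lemma \ref{normalization-lem}) that tensors by $\mathcal{O}_X(-aH+(a-1)E)$. Both operations preserve fullness, so it suffices to show that every $(2)_{a,b}$ is full.

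Next I would invoke Orlov's projective bundle formula (Theorem \ref{Orlov-projbundle-formula}) for $\rho\colon X=\mathbb{P}(\mathcal{V})\to\mathbb{P}^1$ in \eqref{eq:projective bundle 2}:
$$
\mathrm{D}(X)=\langle \rho^{\ast}\mathrm{D}(\mathbb{P}^1),\ \rho^{\ast}\mathrm{D}(\mathbb{P}^1)\otimes\mathcal{O}_X(1),\ \rho^{\ast}\mathrm{D}(\mathbb{P}^1)\otimes\mathcal{O}_X(2)\rangle.
$$
The key point is that each of the three components may be generated independently by the full exceptional pair $\{\mathcal{O}_{\mathbb{P}^1}(k_i),\ \mathcal{O}_{\mathbb{P}^1}(k_i+1)\}$, for any choice of integers $k_1,k_2,k_3\in\mathbb{Z}$. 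Using $\rho^{\ast}\mathcal{O}_{\mathbb{P}^1}(1)=\mathcal{O}_X(H-E)$ and $\mathcal{O}_X(1)=\mathcal{O}_X(aH-(a-1)E)$, a direct bookkeeping shows that after normalizing by the first line bundle the resulting full exceptional collection is exactly of the form $(2)_{A,B}$ with $A=k_2-k_1+a$ and $B=k_3-k_1+2a$. As $(k_1,k_2,k_3)$ ranges over $\mathbb{Z}^3$, the pair $(A,B)$ surjects onto $\mathbb{Z}^2$, so every $(2)_{a,b}$ is realized as such an Orlov collection and hence is full.

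The main obstacle is precisely this computation: one must carefully match the three relative twists in Orlov's decomposition with the two discrete parameters $(a,b)$ of the normal form in Theorem \ref{Classify-Blowup-line-P3}. No further tool should be needed — in contrast to the blow-up of a point (which requires the higher-dimensional augmentation of Theorem \ref{blowup-point-fullness-lemma}) and the blow-up of a twisted cubic (which requires the single-object-swap trick Lemma \ref{usefull-fullness-lem}) — because the two free parameters of type $(2)$ correspond transparently to the two independent relative twists between the three components of Orlov's decomposition.
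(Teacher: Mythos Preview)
Your proposal is correct, and the reduction to type $(2)$ via mutation and normalization is exactly what the paper does. Where you diverge from the paper is in how you establish fullness of every $(2)_{a,b}$. The paper applies Orlov's projective bundle formula with the \emph{same} Beilinson pair $\{\mathcal{O}_{\mathbb{P}^1},\mathcal{O}_{\mathbb{P}^1}(1)\}$ in each of the three blocks, so it only produces the one-parameter family $(2)_{a,2a}$; it then finishes by an induction on $b$, using the single-object-swap lemma (Lemma \ref{usefull-fullness-lem}) to pass from $(2)_{a,k}$ to $(2)_{a,k\pm 1}$. You instead observe that one may choose independent twists $\{\mathcal{O}_{\mathbb{P}^1}(k_i),\mathcal{O}_{\mathbb{P}^1}(k_i+1)\}$ in each block of the semiorthogonal decomposition, which immediately yields all $(2)_{A,B}$ with $A=k_2-k_1+a$ and $B=k_3-k_1+2a$, covering $\mathbb{Z}^2$. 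This is legitimate: block-wise semiorthogonality is inherited from Orlov's theorem regardless of which full exceptional collection one installs inside each $\rho^{\ast}\mathrm{D}(\mathbb{P}^1)\otimes\mathcal{O}_X(i)$. Your route is therefore a bit cleaner here---it really does avoid Lemma \ref{usefull-fullness-lem}, contrary to your parenthetical suggestion that the paper only needs that lemma for the twisted cubic---while the paper's inductive argument has the virtue of being uniform with its treatment of type $(13)$ in the twisted-cubic case.
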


\begin{proof}
By Lemma \ref{normalization-lem} and Lemma \ref{mutation-fullness-lem},
it suffices to show the exceptional collection $(2)$ in Theorem \ref{Classify-Blowup-line-P3} is full. 
By using the projective bundle structure of $X$ (\ref{eq:projective bundle 2}) and Orlov's projective bundle formula (Theorem \ref{Orlov-projbundle-formula}), 
we have semiorthogonal decompositions of $\mathrm{D}(X)$,
\begin{eqnarray*} 
\mathrm{D}(X)
&=&
\langle 
\rho^{\ast}\mathrm{D}(\mathbb{P}^1), 
\rho^{\ast}\mathrm{D}(\mathbb{P}^1)\otimes \mathcal{O}_X(1), 
\rho^{\ast}\mathrm{D}(\mathbb{P}^1)\otimes \mathcal{O}_X(2) 
\rangle \\
&=& 
\langle 
\rho^{\ast}\mathcal{O}_{\mathbb{P}^1}, \rho^{\ast}\mathcal{O}_{\mathbb{P}^1}(1), 
\rho^{\ast}\mathcal{O}_{\mathbb{P}^1}\otimes\mathcal{O}_X(1), \rho^{\ast}\mathcal{O}_{\mathbb{P}^1}(1)\otimes \mathcal{O}_X(1),  \\ 
&& \;\;\;\; \rho^{\ast}\mathcal{O}_{\mathbb{P}^1}\otimes \mathcal{O}_X(2), 
\rho^{\ast}\mathcal{O}_{\mathbb{P}^1}(1)\otimes \mathcal{O}_X(2)\rangle \\
&=&
\langle 
\mathcal{O}_{X}, \mathcal{O}_{X}(H-E), 
\mathcal{O}_X(aH-(a-1)E), \mathcal{O}_{X}((a+1)H-aE) , \\ 
&& \;\;\;\; \mathcal{O}_X(2aH-(2a-2)E), \mathcal{O}_{X}((2a+1)H-(2a-1)E)
\rangle.
\end{eqnarray*}
Based on this semiorthogonal decomposition, we may inductively show that the exceptional collection $(2)$ in Theorem \ref{Classify-Blowup-line-P3} is full.
For any given $a\in \mathbb{Z}$, inductively, 
we assume that for $b=k$ the exceptional collection 
\begin{eqnarray}\label{main-thm-line-equ1}
&& \{\mathcal{O}_{X}, \mathcal{O}_{X}(H-E),
      \mathcal{O}_{X}(aH-(a-1)E),  \mathcal{O}_{X}((a+1)H-aE),\nonumber \\
&& \;\;\;\; \mathcal{O}_{X}(kH+(2-k)E), \mathcal{O}_{X}((k+1)H+(1-k)E)\}
\end{eqnarray}
is full.
Then, for $b=k-1$,  we have the exceptional collection
\begin{eqnarray}\label{main-thm-line-equ2}
&& \{\mathcal{O}_{X}, \mathcal{O}_{X}(H-E),
      \mathcal{O}_{X}(aH-(a-1)E),  \mathcal{O}_{X}((a+1)H-aE),\nonumber \\
&& \;\;\;\; \mathcal{O}_{X}((k-1)H-(k-3)E), \mathcal{O}_{X}(kH-(k-2)E)\},
\end{eqnarray}
and for $b=k+1$ we have the exceptional collection
\begin{eqnarray}\label{main-thm-line-equ3}
&& \{\mathcal{O}_{X}, \mathcal{O}_{X}(H-E),
      \mathcal{O}_{X}(aH-(a-1)E),  \mathcal{O}_{X}((a+1)H-aE), \nonumber \\
&& \;\;\;\; \mathcal{O}_{X}((k+1)H-(k-1)E), \mathcal{O}_{X}((k+2)H-kE)\}.
\end{eqnarray}
Comparing the exceptional collections \eqref{main-thm-line-equ2} and \eqref{main-thm-line-equ3} with \eqref{main-thm-line-equ1},
by Lemma \ref{usefull-fullness-lem}, 
the exceptional collection \eqref{main-thm-line-equ2} and \eqref{main-thm-line-equ3} are full.
This finishes the proof.
\end{proof}


\section{Blow-up a twisted cubic curve in $\mathbb{P}^3$}
\label{blow-up-one-cubic}

In this section, we shall classify the exceptional collection of line bundles of length $6$ on 
the blow-up of $\mathbb{P}^3$ at a twisted cubic curve and show they are all full.
\subsection{Geometry of $X$}
Let $C\subset \mathbb{P}^3$ be a smooth rational curve of degree $3$ (i.e., a twisted cubic curve). 
The normal bundle of $C$ in $\mathbb{P}^3$ is $\mathcal{N}_{C/\mathbb{P}^3}\cong \mathcal{O}_{C}(5)\oplus \mathcal{O}_{C}(5)$ (see \cite[Proposition 6]{EV81}).
Let $\pi:X \to \mathbb{P}^3$ be  the blow-up of $\mathbb{P}^3$ at $C$.
Then $X$ is a (non-toric) smooth Fano threefold.
Let $E$ be the exceptional divisor of $\pi$, 
that is, $E:=\mathbb{P}(\mathcal{N}_{C/\mathbb{P}^3})\cong \mathbb{P}^1\times \mathbb{P}^1$.
We denote by $H$ the pull back of hyperplane class in $\mathbb{P}^3$,
and then the canonical divisor  of $X$ is
$$
K_{X}=\pi^{\ast}K_{\mathbb{P}^3}+E=-4H+E.
$$
The Picard group of $X$ is
$$
\mathrm{Pic}(X)\cong \mathrm{Pic}(\mathbb{P}^3)\oplus \mathbb{Z}[E]
=\mathbb{Z}[H]\oplus \mathbb{Z}[E]
$$
with intersection numbers
$$
H^3=1, H^2E=0, HE^2=-3, E^3=-10,
$$
and $\mathcal{O}_{E}(E)\cong \mathcal{O}_{E}(-S+5F)$,  
$\mathcal{O}_{E}(H)\cong \mathcal{O}_{E}(3F)$, where $S$ and $F$ are given in Example \ref{cohom-zero-F_1}.

Let $a$ be an integer. Then $X$ is also a projective bundle \cite{SW90}
\begin{equation}\label{eq:projective bundle 3}
X\cong \mathbb{P}(\mathcal{W}\otimes \mathcal{O}_{\mathbb{P}^2}(a)) \stackrel{\rho}\to \mathbb{P}^2,
\end{equation}
where the rank two vector bundle $\mathcal{W}$ over $\mathbb{P}^2$ is given by
\begin{equation*}
0\to \mathcal{O}_{\mathbb{P}^2}(-1)^{\oplus 2} \to \mathcal{O}_{\mathbb{P}^2}^{\oplus 4} \to \mathcal{W}(1)\to 0.
\end{equation*}

\subsection{Cohomologically zero line bundles}

\begin{lem}\label{Char-H0-H3-cubic}
$H^{0}(\mathcal{O}_{X}(aH+bE))=0$ if and only if $a<0$ or $a+2b<0$.
Consequently,
$H^{3}(\mathcal{O}_{X}(aH+bE))=0$ if and only if $a>-4$ or $a+2b>-2$.
\end{lem}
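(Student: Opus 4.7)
The plan is to mirror the structure of Lemmas \ref{Char-H0-H3-point} and \ref{Char-H0-H3-line}, reducing the statement about $H^3$ to the one about $H^0$ via Serre duality (since $K_X = -4H + E$), and then proving that $H^0(\mathcal{O}_X(aH+bE)) > 0$ if and only if $a \geq 0$ and $a + 2b \geq 0$ by characterising effective divisors on $X$ using two well-chosen nef classes.

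For the necessary direction (effectivity implies the two inequalities), the first nef class is $H$ itself, pulled back from $\mathbb{P}^3$; intersecting it twice with $aH+bE$ gives $H^2 \cdot (aH+bE) = a \geq 0$. The second and more essential nef class is $2H - E$: since the twisted cubic $C$ is scheme-theoretically cut out by the net of quadrics through it, the linear system $|2H-E|$ is base-point free on $X$ and realises the projective bundle structure $\rho: X \to \mathbb{P}^2$ of (\ref{eq:projective bundle 3}), with $\rho^{\ast}\mathcal{O}_{\mathbb{P}^2}(1) = \mathcal{O}_X(2H-E)$. Hence $2H-E$ is nef, so $(2H-E)^2$ is a nef $1$-cycle (its restriction to any irreducible effective surface is the square of a nef line bundle, which is nonnegative). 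A direct computation using $H^3=1$, $H^2E=0$, $HE^2=-3$, $E^3=-10$ gives
\[ (2H-E)^2 \cdot (aH+bE) = 4a + 12b - 3a - 10b = a + 2b, \]
which must therefore be $\geq 0$.

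For the sufficient direction, assuming $a \geq 0$ and $a+2b \geq 0$, I would split on the sign of $b$. If $b \geq 0$ then $aH+bE$ is visibly effective as a sum of effective divisors. If $b<0$, rewrite
\[ aH + bE = (-b)(2H-E) + (a+2b)H, \]
which is a nonnegative integer combination of two divisor classes with nonempty linear systems, hence effective. The $H^3$ statement then drops out of Serre duality: $H^3(\mathcal{O}_X(aH+bE)) \cong H^0(\mathcal{O}_X((-4-a)H + (1-b)E))^{\vee}$, which vanishes iff $-4-a < 0$ or $(-4-a) + 2(1-b) < 0$, i.e.\ iff $a > -4$ or $a + 2b > -2$.

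The principal novelty compared with the point and line cases is the appearance of $a+2b$ (rather than $a+b$) in the second inequality, and the main obstacle is identifying and justifying the nef class $2H-E$ whose self-intersection detects this combination. This relies on the special geometry of the twisted cubic---namely that its homogeneous ideal is generated by quadrics---manifested in the projective bundle structure (\ref{eq:projective bundle 3}); the rest is a routine transposition of the previous arguments.
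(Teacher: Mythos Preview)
Your proof is correct and follows essentially the same approach as the paper: both use the nef classes $H$ and $2H-E$ to bound an effective divisor via the intersection numbers $H^2\cdot(aH+bE)=a$ and $(2H-E)^2\cdot(aH+bE)=a+2b$, and then invoke Serre duality for the $H^3$ statement. Your write-up is in fact more complete, since you spell out the converse (effectivity when $a\ge0$ and $a+2b\ge0$) via the decomposition $aH+bE=(-b)(2H-E)+(a+2b)H$, which the paper leaves implicit under ``Similar to Lemma~\ref{Char-H0-H3-point}''.
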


\begin{proof}
Similar to Lemma \ref{Char-H0-H3-point}, 
it suffices to show that if $aH+bE$ ($a, b\in \mathbb{Z}$) is an effective divisor, 
then $a\geq 0$ and $a+2b\geq0$.
Since $H$ is a nef divisor,
we obtain the intersection number
$$
H^{2}(aH+bE)=a\geq 0.
$$
Since $2H-E$ is base-point free and hence are nef divisors,
we have the intersection number
$$
(2H-E)^2(aH+bE)=a+2b\geq 0.
$$

Next, by Serre duality, we have
$$
H^{3}(\mathcal{O}_{X}(aH+bE))\cong H^{0}(\mathcal{O}_{X}((-4-a)H-(b-1)E)).
$$
Note that $(-4-a)-2(b-1)=-2-a-2b$ and then the proposition follows.
\end{proof}

Let $Q$ be the proper transform of a smooth quadric surface containing $C$.
Then $\mathcal{O}_{X}(Q)\cong\mathcal{O}_{X}(2H-E)$.
Since $Q\cong \mathbb{P}^1\times \mathbb{P}^1$, we may assume $\mathrm{Pic}(Q)=\mathbb{Z}C_1\oplus \mathbb{Z}C_2$, where $C_1, C_2$ are smooth rational curves on $Q$ with $C_1^2=C_2^2=0$ and $C_1C_2=1$.

\begin{lem}\label{divisor-Q-lem}
$H|_{Q}\sim C_1+C_2$ and $(2H-E)|_{Q}\sim C_1$ or $(2H-E)|_{Q}\sim C_2$.
\end{lem}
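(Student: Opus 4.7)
The plan is to identify $Q$ (which is isomorphic to $\mathbb{P}^{1}\times\mathbb{P}^{1}$ since $C$ is a Cartier divisor on the smooth quadric containing it, so the proper transform coincides with the original quadric) and then pin down the restricted classes $H|_{Q}$ and $(2H-E)|_{Q}$ purely through intersection numbers on $X$, using the known triple intersections $H^{3}=1$, $H^{2}E=0$, $HE^{2}=-3$, $E^{3}=-10$, together with the linear equivalence $Q\sim 2H-E$.

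First I would handle $H|_{Q}$. Since $H$ is base-point free on $X$, its restriction $H|_{Q}$ is a nef class on $Q\cong\mathbb{P}^{1}\times\mathbb{P}^{1}$, so it has the form $aC_{1}+bC_{2}$ with $a,b\geq 0$. I compute
\[
(H|_{Q})^{2}=H^{2}\cdot Q=H^{2}\cdot(2H-E)=2H^{3}-H^{2}E=2.
\]
Since $(aC_{1}+bC_{2})^{2}=2ab=2$ and $a,b\geq 0$, the only possibility is $a=b=1$, giving $H|_{Q}\sim C_{1}+C_{2}$.

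Next I would treat $(2H-E)|_{Q}$. I compute the self-intersection
\[
((2H-E)|_{Q})^{2}=(2H-E)^{2}\cdot(2H-E)=(2H-E)^{3}=8H^{3}-12H^{2}E+6HE^{2}-E^{3}=8-18+10=0,
\]
and the intersection with $H|_{Q}$,
\[
(2H-E)|_{Q}\cdot H|_{Q}=(2H-E)^{2}\cdot H=4H^{3}-4H^{2}E+HE^{2}=4-3=1.
\]
Writing $(2H-E)|_{Q}\sim aC_{1}+bC_{2}$, the first equation gives $2ab=0$ and the second $a+b=1$ (using $H|_{Q}\sim C_{1}+C_{2}$ from the previous step), so $(a,b)=(1,0)$ or $(0,1)$, i.e. $(2H-E)|_{Q}\sim C_{1}$ or $\sim C_{2}$.

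The only subtle point is justifying $Q\cong\mathbb{P}^{1}\times\mathbb{P}^{1}$ and making sure $H|_{Q}$ is genuinely nef (so that the signs $a,b\geq 0$ can be used). Everything else is bookkeeping with the intersection numbers on $X$, so I expect no serious obstacle.
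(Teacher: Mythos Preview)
Your proof is correct and follows essentially the same approach as the paper: identify $Q\cong\mathbb{P}^1\times\mathbb{P}^1$, write the restricted classes as $aC_1+bC_2$, and pin them down via intersection numbers computed on $X$ using $Q\sim 2H-E$. The only minor variation is that for $(2H-E)|_Q$ the paper invokes nefness of $2H-E$ (it is base-point free) to get $a',b'\geq 0$ and then uses only $a'+b'=1$, whereas you bypass nefness by additionally computing the self-intersection $(2H-E)^3=0$; both routes arrive at the same conclusion.
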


\begin{proof}
Suppose $H|_{Q}\sim aC_1+bC_2$ are linear equivalent. 
Since $H|_{Q}$ is a nef divisor, then $a, b\geq 0$.
So we get the intersection number
$$
2ab=(H|_{Q})^2=H^2(2H-E)=2H^3-H^2E=2,
$$
and thus $a=b=1$.

Suppose $(2H-E)|_{Q}\sim a'C_1+b'C_2$ are linear equivalent. 
Since $(2H-E)|_{Q}$ is a nef divisor, then $a', b'\geq 0$.
We have the intersection number
$$
a'+b'=(H|_{Q})((2H-E)|_{Q})=H(2H-E)(2H-E)=1,
$$
and hence $a'=1,b'=0$ or $a'=0,b'=1$.
\end{proof}

\begin{rem}
From now on, we may assume $(2H-E)|_{Q}\sim C_2$.
\end{rem}

For the case of the blow-up of $\mathbb{P}^3$ at a twisted cubic curve, there is no similar result as Lemma \ref{H1H2-zero-lem-point} and Lemma \ref{H1H2-zero-lem-line}. In fact, this is the reason why we do not know whether $\mathcal{O}_X(aH+bE)$ is cohomologically zero in either of the two cases (10) and (11) in Proposition \ref{cohom-zero-lem-cubic}.
But there are still some characterizations of cohomologically zero line bundles.
In fact, we know that if a line bundles $\mathcal{O}_{X}(aH+bE)$ is cohomologically zero then $\chi(\mathcal{O}_{X}(aH+bE))=0$.
Combining with Lemma \ref{Char-H0-H3-cubic}, 
we obtain that a line bundle is cohomologically zero if it satisfy at least one of the conditions in the following proposition.

\begin{prop}\label{cohom-zero-lem-cubic}
A line bundle $\mathcal{O}_{X}(aH+bE)$ is cohomologically zero if it is one of the following:
\begin{enumerate}
\item[(1)] $a+2b=-1$;
\item[(2)] $a=-1, b=1$;
\item[(3)] $a=-2, b=0$;
\item[(4)] $a=-2, b=1$;
\item[(5)] $a=-3, b=0$;
\item[(6)] $a=-4, b=2$;
\item[(7)] $a=-7, b=4$;
\item[(8)] $a=0, b=-1$;
\item[(9)] $a=3, b=-3$.
\end{enumerate}
On the other hand, if $\mathcal{O}_{X}(aH+bE)$ is cohomologically zero and belongs to none of the nine cases above, then one of the following two statements is true:
\begin{enumerate}
\item[(10)]  $a<-3, a+2b>3$ and $f(a,b):=a^2+5a+6-2ab-5b^2+b=0$;
\item[(11)]  $a>-1, a+2b<-3$ and $f(a,b)=0$.
\end{enumerate}
\end{prop}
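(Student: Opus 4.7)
\medskip

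\noindent\textbf{Proof proposal.} The plan is to combine a Riemann--Roch computation, which yields a necessary condition via $\chi=0$, with the vanishing criteria of Lemma \ref{Char-H0-H3-cubic} for $H^{0}$ and $H^{3}$, and then to establish $H^{1}=H^{2}=0$ in the nine listed cases by restriction to the quadric surface $Q$. First I would compute $c_{1}(X)=4H-E$ and $c_{2}(X)$ using the blow-up formula for Chern classes, remembering that $C\subset \mathbb{P}^{3}$ is a twisted cubic with $[C]=3H^{2}$, and that $\mathcal{N}_{C/\mathbb{P}^{3}}\cong \mathcal{O}_{C}(5)^{\oplus 2}$. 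Plugging into formula \eqref{RR-formula} with the intersection numbers $H^{3}=1,\, H^{2}E=0,\, HE^{2}=-3,\, E^{3}=-10$, I expect the Euler characteristic to factor as
\[
\chi(\mathcal{O}_{X}(aH+bE))=\tfrac{1}{6}(a+2b+1)\,f(a,b),
\]
where $f(a,b)=a^{2}+5a+6-2ab-5b^{2}+b$ is exactly the quadratic appearing in the statement.

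Next, for the sufficient direction in cases (1)--(9), a direct verification shows that each of the nine $(a,b)$ conditions forces $\chi=0$ via the above factorization, and forces the $H^{0}$ and $H^{3}$ vanishings required by Lemma \ref{Char-H0-H3-cubic}. To conclude cohomological vanishing it then remains to show $H^{1}=H^{2}=0$. For this I would mimic Lemma \ref{H1H2-zero-lem-line}, but restricting to the proper transform $Q\cong \mathbb{P}^{1}\times \mathbb{P}^{1}$ instead of the plane. The relevant short exact sequence is
\[
0\to \mathcal{O}_{X}((a-2)H+(b+1)E)\to \mathcal{O}_{X}(aH+bE)\to \mathcal{O}_{Q}(aH+bE)\to 0,
\]
where by Lemma \ref{divisor-Q-lem} one has $\mathcal{O}_{Q}(aH+bE)\cong \mathcal{O}_{\mathbb{P}^{1}\times \mathbb{P}^{1}}(a+2b,\,a+b)$ (using $H|_{Q}\sim C_{1}+C_{2}$ and $(2H-E)|_{Q}\sim C_{2}$, hence $E|_{Q}\sim 2C_{1}+C_{2}$). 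In case (1), $a+2b=-1$, so the restriction is $\mathcal{O}(-1,a+b)$, which is cohomologically zero on $Q$ by Example \ref{cohom-zero-F_1}; iterating the exact sequence (shifting $(a,b)\to(a-2,b+1)$ preserves the condition $a+2b=-1$) together with Serre duality controls $H^{1}$ and $H^{2}$ uniformly. For each of the isolated cases (2)--(9), an analogous restriction, possibly iterated a finite number of times, reduces the vanishing to the cohomology of explicit line bundles on $\mathbb{P}^{1}\times \mathbb{P}^{1}$; alternatively, the projective bundle structure \eqref{eq:projective bundle 3} together with Orlov's formula (Theorem \ref{Orlov-projbundle-formula}) can be used to write each such line bundle as a known generator.

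For the remaining statement concerning (10) and (11), the observation is that \emph{any} cohomologically zero line bundle satisfies $\chi=0$, hence $a+2b+1=0$ or $f(a,b)=0$. The first alternative is case (1). In the second alternative, I enumerate the integer solutions of $f(a,b)=0$ satisfying the inequalities from Lemma \ref{Char-H0-H3-cubic}: the solutions with $-4<a<0$ or with $-2<a+2b<0$ fall into the finite list (2)--(9), while the remaining integer solutions necessarily lie in the two infinite families $a<-3,\,a+2b>3$ or $a>-1,\,a+2b<-3$, which are cases (10) and (11).

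The main obstacle will be case (1), where the set of $(a,b)$ is an entire line and therefore the argument must be uniform in $a+b$; here the key is that the restriction to $Q$ is always of the form $\mathcal{O}(-1,\,a+b)$, which is cohomologically zero regardless of $a+b$, so the induction proceeds without obstruction. A secondary subtlety is that in cases (10)--(11) we genuinely cannot assert cohomological vanishing from this method, since $H^{1}$ and $H^{2}$ could cancel against each other in $\chi$; this explains why the proposition only states the one-directional implication for the nine listed cases and merely constrains the remaining possibilities in (10)--(11).
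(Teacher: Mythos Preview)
Your overall strategy---compute $\chi$ via Riemann--Roch, factor it as $\tfrac{1}{6}(a+2b+1)f(a,b)$, use Lemma~\ref{Char-H0-H3-cubic} for $H^{0}$ and $H^{3}$, and then separately verify $H^{1}=H^{2}=0$ in the nine explicit cases---is exactly what the paper does, and your treatment of the necessary direction (deriving (10)--(11)) is correct. Your handling of case~(1) via restriction to $Q$ is also the paper's argument, though you should state the base case explicitly: the iteration terminates at $\mathcal{O}_{X}(-H)$, which is a pullback from $\mathbb{P}^{3}$ and hence acyclic (Serre duality is not really needed here).

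The genuine gap is in cases (2), (4), (6), (7), (9). Restriction to $Q$ does \emph{not} suffice there. With your formula $(aH+bE)|_{Q}\cong\mathcal{O}(a+2b,\,a+b)$, one checks that the restriction fails to be cohomologically zero on $Q$ in each of these cases (for instance $(-H+E)|_{Q}\cong\mathcal{O}(1,0)$ in case~(2), and $(-4H+2E)|_{Q}\cong\mathcal{O}(0,-2)$ in case~(6) after one step), so the induction does not close; in particular (4) and (6) only reduce to each other, which is circular. The paper repairs this by also restricting to the \emph{exceptional divisor} $E\cong\mathbb{P}^{1}\times\mathbb{P}^{1}$, using $\mathcal{O}_{E}(H)\cong\mathcal{O}_{E}(3F)$ and $\mathcal{O}_{E}(E)\cong\mathcal{O}_{E}(-S+5F)$: the sequence $0\to\mathcal{O}_{X}(aH+(b-1)E)\to\mathcal{O}_{X}(aH+bE)\to\mathcal{O}_{E}(aH+bE)\to 0$ reduces (2) and (4) to the pullbacks $\mathcal{O}_{X}(-H)$ and $\mathcal{O}_{X}(-2H)$, and reduces (7) and (9) to instances of case~(1); case~(6) then follows from (4) via one $Q$-restriction. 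Cases (3), (5), (8) are easy (pullbacks from $\mathbb{P}^{3}$, or one $Q$-step). Your alternative suggestion of invoking the projective-bundle structure to ``write each such line bundle as a known generator'' does not yield cohomological vanishing and should be dropped.
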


\begin{proof}
At first, by Lemma \ref{Char-H0-H3-cubic},
we have $H^{0}(\mathcal{O}_{X}(aH+bE))=H^{3}(\mathcal{O}_{X}(aH+bE))=0$
if and only if one of the following conditions hold:
\begin{enumerate}
\item[(i)] $-4<a<0$;
\item[(ii)] $a+2b>-2, a<0$;
\item[(iii)] $a>-4, a+2b<0$;
\item[(iv)] $-2<a+2b<0$.
\end{enumerate}
By Blow-up formula of Chern classes, 
we have
$$
c_2(X)
=\pi^{\ast}(c_2(\mathbb{P}^3)+\mu_{C})-\pi^{\ast}c_{1}(\mathbb{P}^3)E
=9H^2-4HE,
$$
where $\mu_{C}=3H^2$.
By Riemann-Roch formula \eqref{RR-formula},
we have
\begin{eqnarray*}
\chi(\mathcal{O}_{X}(aH+bE)) 
&=&  \frac{1}{6}(a^3+6a^2+11a+6-10b^3-3b^2+13b+9ab-9ab^2)\\
&=& \frac{1}{6}(a+2b+1) f(a,b).
\end{eqnarray*}
Therefore, a line bundle is cohomologically zero only if it lies in one of the $11$ cases.

Second, from the short exact sequence,
$$
0\to \mathcal{O}_{X}(-Q) \to \mathcal{O}_{X} \to \mathcal{O}_{Q} \to 0,
$$
we tensor with $\mathcal{O}_{X}(kQ-H)$ and  $\mathcal{O}_{X}(-2H+E)$ to obtain two exact sequences,
\begin{equation}\label{exact-sequ-for-(1)}
0\to \mathcal{O}_{X}((k-1)Q-H)\to \mathcal{O}_{X}(kQ-H)\to \mathcal{O}_{Q}(kQ-H)\to 0, \;(\forall k\in \mathbb{Z}),
\end{equation}
and
\begin{equation}\label{exact-sequ-for-(6)}
0\to \mathcal{O}_{X}(-4H+2E)\to \mathcal{O}_{X}(-2H+E)\to \mathcal{O}_{Q}(-2H+E)\to 0.
\end{equation}

To show $(1)$, it is important to note that $ \mathcal{O}_{Q}(kQ-H)= \mathcal{O}_{Q}(-C_1+(k-1)C_2)$, and then $\mathcal{O}_{Q}(kQ-H)$ is cohomologically zero on $Q\cong \mathbb{P}^1\times \mathbb{P}^1$.
Therefore, by \eqref{exact-sequ-for-(1)}, 
we obtain
$$
h^{i}(\mathcal{O}_{X}(kQ-H))=h^{i}(\mathcal{O}_{X}((k-1)Q-H))=\cdots=h^{i}(\mathcal{O}_{X}(-H))=0, \forall k, i\in \mathbb{Z}.
$$
Hence, if $a+2b=-1$ then $H^{i}(\mathcal{O}_{X}(aH+bE))=H^{i}(\mathcal{O}_{X}(-b(2H-E)-H))=0$ 
for all $i \in \mathbb{Z}$, that is, $\mathcal{O}_{X}(aH+bE)$ is cohomologically zero.

To prove $(6)$, from Lemma \ref{divisor-Q-lem} and Example \ref{cohom-zero-F_1}, 
we first notice that the line bundle $\mathcal{O}_{Q}(-2H+E)\cong \mathcal{O}_{Q}(-C_2)$ is cohomologically zero.
From $(4)$, we will see that the line bundle $\mathcal{O}_{X}(-2H+E)$ is cohomologically zero.
By \eqref{exact-sequ-for-(6)}, we get that $\mathcal{O}_{X}(-4H+2E)$ is cohomologically zero.

Third, it is not difficult to see that if $(3)$, $(5)$ and $(8)$ hold 
then $\mathcal{O}_{X}(aH+bE)$ is cohomologically zero.

At last, from the short exact sequence,
$$
0\to \mathcal{O}_{X}(-E) \to \mathcal{O}_{X} \to \mathcal{O}_{E} \to 0,
$$
we tensor with $\mathcal{O}_{X}(-H+E)$, $\mathcal{O}_{X}(-2H+E)$, $\mathcal{O}_{X}(-7H+4E)$ and $\mathcal{O}_{X}(3H-2E)$ respectively to obtain the following exact sequences,
\begin{equation}\label{exact-sequ-for-(2)}
0\to \mathcal{O}_{X}(-H)\to \mathcal{O}_{X}(-H+E)\to \mathcal{O}_{E}(-H+E)\to 0,
\end{equation}
\begin{equation}\label{exact-sequ-for-(4)}
0\to \mathcal{O}_{X}(-2H)\to \mathcal{O}_{X}(-2H+E)\to \mathcal{O}_{E}(-2H+E)\to 0,
\end{equation}
\begin{equation}\label{exact-sequ-for-(7)}
0\to \mathcal{O}_{X}(-7H+3E)\to \mathcal{O}_{X}(-7H+4E)\to \mathcal{O}_{E}(-7H+4E)\to 0,
\end{equation}
\begin{equation}\label{exact-sequ-for-(9)}
0\to \mathcal{O}_{X}(3H-3E)\to \mathcal{O}_{X}(3H-2E)\to \mathcal{O}_{E}(3H-2E)\to 0.
\end{equation}
From Example \ref{cohom-zero-F_1}, we obtain that the line bundles,
$\mathcal{O}_{E}(-H+E)\cong \mathcal{O}_{E}(-S+2F)$,
$\mathcal{O}_{E}(-2H+E)\cong \mathcal{O}_{E}(-S-F)$,
$\mathcal{O}_{E}(-7H+4E)\cong \mathcal{O}_{E}(-4S-F)$
and 
$\mathcal{O}_{E}(3H-2E)\cong \mathcal{O}_{E}(2S-F)$
are cohomologically zero on $E\cong \mathbb{P}^1\times \mathbb{P}^1$.
Since $3+2\times (-2)=-1$ and $-7+2\times 3=-1$,
from $(1)$, we get that $\mathcal{O}_{X}(3H-2E)$ and $\mathcal{O}_{X}(-7H+3E)$ are cohomologically zero.
Then $(2)$, $(4)$, $(7)$ and $(9)$ hold following the exact sequences \eqref{exact-sequ-for-(2)} \eqref{exact-sequ-for-(4)}, \eqref{exact-sequ-for-(7)} and \eqref{exact-sequ-for-(9)} respectively.
\end{proof}

\subsection{Classification results}

\begin{thm}\label{Classify-Blowup-twisted-cubic-P3}
Let $X$ be the blow-up of $\mathbb{P}^{3}$ at a twisted cubic curve.
Then the normalized sequence
\begin{equation}\label{basic-type-of-EFC-linebundle3}
\{\mathcal{O}_{X},\mathcal{O}_{X}(D_1), \mathcal{O}_{X}(D_2),\mathcal{O}_{X}(D_3), \mathcal{O}_{X}(D_4), \mathcal{O}_{X}(D_5)\}
\end{equation}
is an exceptional collection of line bundles  
if and only if the ordered set of divisors $\{D_1, D_2, D_3, D_4, D_5\}$ is one of the following types:
\begin{enumerate}
 \item[$(1)$] $\{H, 3H-E, 
                 E, 2H, 3H \}$;

\item[$(2)$] $\{2H-E, -H+E, 
                 H, 2H, 3H-E\}$; 

\item[$(3)$] $\{-3H+2E, -H+E, 
                 E, H, 2H\}$; 

\item[$(4)$] $\{2H-E, 3H-E, 
                 4H-2E, 5H-2E, 7H-3E\}$;                     

\item[$(5)$] $\{H, 2H-E, 
                 3H-E, 5H-2E, 2H \}$;

\item[$(6)$] $\{H-E, 2H-E, 
                 4H-2E, H, 3H-E \}$;
                       
\item[$(7)$] $\{2H-E, -3H+2E, 
                 4H-2E, -H+E, H \}$;                

\item[$(8)$]  $\{-5H+3E, 2H-E, 
                   -3H+2E, -H+E, 2H \}$; 

\item[$(9)$]  $\{7H-4E, 2H-E, 
                   4H-2E, 7H-3E, 9H-4E \}$; 

\item[$(10)$] $\{-5H+3E, -3H+2E, 
                  E, 2H, -3H+3E \}$;   

\item[$(11)$] $\{2H-E, 5H-2E, 
                    7H-3E, 2H, 9H-4E\}$;                
                                               
\item[$(12)$] $\{3H-E, 5H-2E, 
                    E, 7H-3E, 2H\}$;  
                                            
\item[$(13)_b$]   $\{2H-E, 4H-2E, 
                      (2b-3)H-(b-2)E,  
                      (2b-1)H-(b-1)E,  (2b+1)H-bE \}$;

\item[$(14)_b$]   $\{2H-E,  
                      (2b-3)H-(b-2)E, (2b-1)H-(b-1)E,  
                      (2b+1)H-bE, 2H \}$;
               
\item[$(15)_b$]   $\{(2b-3)H-(b-2)E, 
                     (2b-1)H-(b-1)E,  (2b+1)H-bE, 
                     E, 2H \}$,
\end{enumerate}
where $b\in \mathbb{Z}$. Moreover, by mutations and normalizations, they are related as:
\begin{eqnarray*}
& & (1)\Rightarrow(2)\Rightarrow(3)\Rightarrow(4)\Rightarrow(5)\Rightarrow (6) \Rightarrow (1); \\
& & (7)\Rightarrow(8)\Rightarrow(9)\Rightarrow(10)\Rightarrow(11)\Rightarrow (12)
\Rightarrow (7);\\
& & (13)_b\Rightarrow(14)_{b-1}\Rightarrow(15)_{b-2}\Rightarrow (13)_{5-b}.
\end{eqnarray*}
\end{thm}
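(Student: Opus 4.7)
The plan is to mirror the combinatorial classification carried out for the blow-ups at a point (Theorem \ref{Classify-Blowup-p-P3}) and at a line (Theorem \ref{Classify-Blowup-line-P3}). Setting $D_{0}=0$, by Lemma \ref{lem:exlb} the normalized sequence \eqref{basic-type-of-EFC-linebundle3} is an exceptional collection if and only if $\mathcal{O}_{X}(D_{j}-D_{i})$ is cohomologically zero for every $0\leq j<i\leq 5$. In particular, taking $j=0$ forces each $\mathcal{O}_{X}(-D_{i})$ to be cohomologically zero, which by Proposition \ref{cohom-zero-lem-cubic} restricts each $D_{i}$ to a short list of admissible ``types.''

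Guided by Proposition \ref{cohom-zero-lem-cubic}, I would first collect the candidate divisors: the one-parameter family coming from case (1), namely $B_{0}(b):=(2b-1)H-(b-1)E$ for $b\in\mathbb{Z}$, together with the sporadic types produced by (2)--(9): $H-E$, $2H$, $2H-E$, $3H-E$, $4H-2E$, $7H-3E$, $E$, and $-3H+3E$ (after passing to $-D_{i}$). I then build an exceptional-pair table analogous to Tables \ref{tab1:g} and \ref{tab2:g}, recording for which ordered pairs $(B_{s},B_{t})$ the line bundle $B_{s}\otimes B_{t}^{-1}$ is cohomologically zero. A systematic case analysis on the first nontrivial term $\mathcal{O}_{X}(D_{1})$, followed by repeated insertion of further admissible divisors, should enumerate exactly the fifteen families (1)--(15) in the statement; the appearance of the one-parameter families $(13)_{b}$, $(14)_{b}$, $(15)_{b}$ reflects the infinite family of candidates $B_{0}(b)$, while the sporadic cases (1)--(12) each involve only divisors from the finite list. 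The cyclic mutation/normalization relations at the end of the statement are then verified mechanically using Lemma \ref{mutation-fullness-lem} with $\omega_{X}^{\vee}=\mathcal{O}_{X}(4H-E)$ together with Lemma \ref{normalization-lem}.

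The main obstacle, not present in Sections \ref{blow-up-one-point} and \ref{blow-up-one-line}, is that Proposition \ref{cohom-zero-lem-cubic} is not a complete characterization: the residual cases (10) and (11) are only necessary (they arise from $\chi=0$) and a priori could contribute additional cohomologically zero line bundles. For the \emph{if} direction this is harmless, since each of the fifteen families can be checked to use only differences lying in cases (1)--(9). For the \emph{only if} direction I would need to argue that hypothetical cohomologically zero bundles from (10) or (11) cannot yield new length-$6$ exceptional collections. The key leverage is the extreme inequalities $a<-3,\ a+2b>3$ in (10) and $a>-1,\ a+2b<-3$ in (11): any $D_{i}$ whose negative lies in these regimes is so far from $0$ in the $(H,E)$-lattice that the remaining differences $D_{j}-D_{i}$ cannot all be cohomologically zero, for then one of them would violate the effectivity/Serre-duality bounds of Lemma \ref{Char-H0-H3-cubic}. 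A careful bookkeeping of which differences $D_{j}-D_{i}$ are simultaneously compatible should therefore rule out any contribution from (10) or (11), reducing the classification to the fifteen families already enumerated.

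The secondary difficulty is purely combinatorial: because the list of cohomologically zero types on the twisted-cubic blow-up is richer than in the previous two sections, the exceptional-pair table and the subsequent case analysis are substantially larger, and keeping track of which insertions of a $B_{0}(b)$ fit between sporadic types is the tedious part of the argument. Once the table is in hand, however, enumeration and the mutation/normalization cycle proceed in direct parallel with the arguments of Theorems \ref{Classify-Blowup-p-P3} and \ref{Classify-Blowup-line-P3}.
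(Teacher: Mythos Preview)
Your overall strategy---reduce to cohomologically zero differences via Lemma \ref{lem:exlb}, list candidate types from Proposition \ref{cohom-zero-lem-cubic}, build an exceptional-pair table, and run a case analysis on $\mathcal{O}_{X}(D_{1})$---is exactly what the paper does, and your handling of the mutation/normalization cycles is fine.

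The gap is in your treatment of the residual cases (10) and (11). Your proposed argument, that a single $D_{i}$ with $-D_{i}$ in region (10) or (11) is ``so far from $0$'' that some other difference $D_{j}-D_{i}$ must violate the $H^{0}/H^{3}$ bounds of Lemma \ref{Char-H0-H3-cubic}, only gets you part of the way. What it actually shows (and what the paper's Table \ref{tab3:g} records) is that a type-(10)/(11) divisor does not form an exceptional pair with any of the concrete types $B_{0},\ldots,B_{8}$: those entries are blank because the Euler characteristic of the difference is nonzero. But the table has genuine ``?'' entries in the $B_{9}/B_{10}$ block, so your argument does not exclude the possibility that \emph{several consecutive} $D_{i}$ all lie in regions (10)/(11) and pair only among themselves. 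The bounds of Lemma \ref{Char-H0-H3-cubic} say nothing here, since by construction these regions already satisfy $H^{0}=H^{3}=0$.

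The paper closes this with a separate step (Claim \ref{no-(10-11)}): writing $g(a,b)=\chi(\mathcal{O}_{X}(aH+bE))=\tfrac{1}{6}(a+2b+1)f(a,b)$, one imposes the simultaneous polynomial system
\[
f(-a_{1},-b_{1})=f(-a_{2},-b_{2})=f(-a_{3},-b_{3})=0,\qquad g(a_{i}-a_{j},b_{i}-b_{j})=0\ \ (1\le i<j\le 3),
\]
and solves it (e.g.\ by \texttt{Mathematica}); the only solutions fail the inequalities defining (10)/(11), so no exceptional \emph{triple} can live entirely in those regions. Together with the blank $B_{9}/B_{10}$ rows and columns against $B_{0},\ldots,B_{8}$, this forces $\mathcal{O}_{X}(D_{1})\in\{B_{0},B_{1},B_{3},B_{6}\}$ and the enumeration proceeds as you outlined. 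You should replace the vague ``bookkeeping'' paragraph with this explicit Euler-characteristic system; the $H^{0}/H^{3}$ bounds alone will not finish the job.
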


\begin{proof}
The idea of the proof is the same as that of Theorem \ref{Classify-Blowup-p-P3}. Write $D_0=0$. By Lemma \ref{lem:exlb},
the sequence \eqref{basic-type-of-EFC-linebundle3}
is an exceptional collection if and only if for any integers $0\leq j<i\leq 5$ the line bundles $\mathcal{O}_{X}(D_j-D_i)$ are cohomologically zero.

Suppose the sequence \eqref{basic-type-of-EFC-linebundle3} is an exceptional collection.
By Proposition \ref{cohom-zero-lem-cubic},
the line bundle $\mathcal{O}_{X}(D_{i})$ must be one of the following line bundles:
$B_0=\mathcal{O}_{X}((2b+1)H-bE)$,
$B_1=\mathcal{O}_{X}(H-E)$,
$B_2=\mathcal{O}_{X}(2H)$,
$B_3=\mathcal{O}_{X}(2H-E)$,
$B_4=\mathcal{O}_{X}(3H)$,
$B_5=\mathcal{O}_{X}(4H-2E)$,
$B_6=\mathcal{O}_{X}(7H-4E)$,
$B_7=\mathcal{O}_{X}(E)$
$B_8=\mathcal{O}_{X}(-3H+3E)$,
$B_9=\mathcal{O}_{X}(aH+bE)$ ($a, b$ satisfy condition (10) in Proposition \ref{cohom-zero-lem-cubic})
and $B_{10}=\mathcal{O}_{X}(aH+bE)$ ($a, b$ satisfy condition (11) in Proposition \ref{cohom-zero-lem-cubic}).
Furthermore,  
to find out all the exceptional collections \eqref{basic-type-of-EFC-linebundle3}, 
it suffices to pick up $\{B_{l_1},\cdots ,B_{l_5}\}$ such that each pair $(B_{l_j},B_{l_i})$ ($j<i$) is an exceptional pair.
To attain this, we shall build up a table of all exceptional pairs which consists of $B_i$ ($i=0,1, \cdots,10$).
Since a pair $(B_s, B_t)$ is an exceptional pair if and only if the line bundle  $B_s \otimes B_t^{\vee}$ is cohomologically zero, by Proposition \ref{cohom-zero-lem-cubic} we get the following table:

\begin{table}[ht]
\caption{Exceptional pairs $(B_s, B_t)$ for blow-up of $\mathbb{P}^3$ at a twisted cubic curve}\label{tab3:g}
\begin{center}
{\tiny

\begin{tabular}{|p{3mm}|p{15mm}|p{3mm}|p{3mm}|p{12mm}|p{12mm}|p{12mm}|p{3mm}|p{3mm}|p{12mm}|p{3mm}|p{3mm}|}

\hline 
  & $B_0^{\prime}$ & $B_1$ & $B_2$ & $B_3$ & $B_4$ & $B_5$ & $B_6$ & $B_7$ & $B_8 $& $B_9$ & $B_{10}$\\
\hline 
 $B_0$& $b=b^{\prime}+1$, $b^{\prime}+2$& &$\forall b$&$b=0$, 3& $b=0$, $-1$&$b=-1$,  $2$& &$\forall b$ &$b=2$, 3& & \\
\hline 
 $B_1$& $b^{\prime}=0$, $-1$& & &$\surd$& &$\surd$& & & & & \\
\hline 
 $B_2$& $b^{\prime}=-1$, $-4$& & & & $\surd$& & & &$\surd$& & \\
\hline 
 $B_3$&$\forall  b^{\prime}$& &$\surd$ & & &$\surd$& & & & & \\
\hline 

 $B_4$& & & & & & & & & & & \\
 \hline
 $B_5$& $\forall  b^{\prime}$& & & & & & & & & & \\
 \hline
 $B_6$& $b^{\prime}=-3$, $-4$& & &$\surd$& &$\surd$& & & & & \\
 \hline
 $B_7$& $b^{\prime}=0$, $-3$& & $\surd$ & &$\surd$ & & & &$\surd$ & & \\
 \hline
 $B_8$& & & & & & & & & & & \\
 \hline
 $B_9$&  & & & & & & & & &? &? \\
 \hline
 $B_{10}$& & & & & & & & & & ?& ?\\
 \hline
 \end{tabular}
}
\end{center}
\;
\;
\small{
The mark ``?" means that we do not know whether the corresponding pair could be an exceptional pair or not.}

\end{table}

\begin{claim}\label{small-lem2}
$\{\mathcal{O}_{X}, \mathcal{O}_{X}((2b_1+1)H-b_1E), \cdots, \mathcal{O}_{X}((2b_i+1)H-b_iE)\}$ is an exceptional collection if and only if it is one of the following conditions hold:
\begin{enumerate}
\item[(1)] $i=1$, $b_1\in \mathbb{Z}$;

\item[(2)] $i=2$, $b_1=b_2-1$ or $b_1=b_2-2$;

\item[(3)] $i=3$, $b_1+1=b_2=b_3-1$.
\end{enumerate}
\end{claim}

\begin{proof}[Proof of Claim \ref{small-lem2} ]
The pair $\{\mathcal{O}_{X}((2a+1)H-aE), \mathcal{O}_{X}((2b+1)H-bE)\}$ is an exception collection if and only if $\mathcal{O}_{X}(2(a-b)H-(a-b)E)$ is cohomologically zero. 
Then, by Proposition \ref{cohom-zero-lem-cubic} (4) and (6), 
we get $a-b=-1$ or $a-b=-2$ and the claim follows.
\end{proof}

\begin{claim}\label{no-(10-11)}
Let $D_i=a_i H+b_i E$, $i=1,2,3$, be three divisors on $X$. We assume, for any $1\le i\le 3$, $-D_i$ satisfies either (10) or (11) in Proposition \ref{cohom-zero-lem-cubic}. Then $(D_1, D_2, D_3)$ cannot be an exceptional collection of length 3.
\end{claim}

\begin{proof}[Proof of Claim \ref{no-(10-11)}]  
Denote $g(a,b):=\frac{1}{6}(a+2b+1) f(a,b)$, which is the Euler characteristic function in the proof of Proposition \ref{cohom-zero-lem-cubic}. Here $f(a,b):=a^2+5a+6-2ab-5b^2+b$.
The following system of equations 
\begin{eqnarray*}
f(-a_1, -b_1) = f(-a_2, -b_2) = f(-a_3, -b_3) =0,\\
g(a_1 - a_2, b_1 - b_2) =g(a_1 - a_3, b_1 - b_3)=g(a_2 - a_3, b_2 - b_3) =0,
\end{eqnarray*}
has exactly four solutions (e.g., by computer software \texttt{Mathematica}): 
$$
(a_1, b_1, a_2, b_2, a_3, b_3)=(0,1,2,0,-3,3), (0,1,2,0,3,0), (1,-1,2,-1,4,-2), \text{ or }(7,-4,2,-1,4,-2). 
$$
Then the claim follows.
\end{proof}

From Claim \ref{small-lem2}, Claim \ref{no-(10-11)} and Table \ref{tab3:g}, 
we observe that the line bundle $\mathcal{O}_{X}(D_1)$ in the sequence \eqref{basic-type-of-EFC-linebundle3} may be only one of $B_0$, $B_1$, $B_3$ and $B_6$.
Similar to the proof of Theorem \ref{Classify-Blowup-p-P3}, 
one may discuss $\mathcal{O}_{X}(D_1)$ case-by-case to obtain Theorem \ref{Classify-Blowup-twisted-cubic-P3}. 
Since this is totally the same as the proof of Theorem \ref{Classify-Blowup-twisted-cubic-P3}, we leave it to the interested readers.
\end{proof}


Now we are in the position to prove the main result of the current section.

\begin{thm}\label{main-thm-cubic}
Let $X$ be the blow-up of $\mathbb{P}^{3}$ at a twisted cubic curve.
Then any exceptional collection of line bundles of length $6$ 
on $X$ is full.
\end{thm}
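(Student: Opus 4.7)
The plan is to use the classification in Theorem \ref{Classify-Blowup-twisted-cubic-P3} to reduce to a handful of representative cases, obtain one infinite family of full exceptional collections directly from Orlov's projective bundle formula, and bridge the remaining two finite cycles to this family by a single mutually-orthogonal swap. By Lemma \ref{normalization-lem} I may restrict attention to normalized collections, so Theorem \ref{Classify-Blowup-twisted-cubic-P3} exhausts all cases. The fifteen families there partition into three orbits under mutations together with normalizations: the finite cycle $(1)\to\cdots\to(6)\to(1)$, the finite cycle $(7)\to\cdots\to(12)\to(7)$, and the infinite family spanning $\{(13)_b,(14)_b,(15)_b\}_{b\in\mathbb{Z}}$. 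Since Lemma \ref{mutation-fullness-lem} guarantees that fullness is constant on each orbit, it will be enough to produce one full representative of each.

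For the infinite family I would apply Orlov's projective bundle formula (Theorem \ref{Orlov-projbundle-formula}) to $\rho\colon X=\mathbb{P}(\mathcal{W}(a))\to\mathbb{P}^2$ from \eqref{eq:projective bundle 3}. A short Chern class calculation, combining $K_X=-4H+E$ with the relative-canonical identity $K_X=\rho^*(K_{\mathbb{P}^2}+\det(\mathcal{W}(a))^\vee)\otimes\mathcal{O}_X(-2)$ and the nef identification $\rho^*\mathcal{O}_{\mathbb{P}^2}(1)=\mathcal{O}_X(2H-E)$, yields $\mathcal{O}_X(1)=\mathcal{O}_X(-(2a+1)H+(a+1)E)$. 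Feeding Beilinson's exceptional collection $\{\mathcal{O}_{\mathbb{P}^2},\mathcal{O}_{\mathbb{P}^2}(1),\mathcal{O}_{\mathbb{P}^2}(2)\}$ into Orlov's formula then reproduces precisely the collection $(13)_b$ with $b=1-a$. Letting $a$ range over $\mathbb{Z}$ makes $b$ sweep $\mathbb{Z}$, showing that $(13)_b$ is full for every $b$, which in turn forces fullness of every $(14)_c$ and $(15)_d$ via the mutation cycle of the theorem.

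For the remaining two orbits the plan is to bridge them to a member of the Orlov family by a single swap of a mutually-orthogonal consecutive pair, which is a valid mutation and so preserves fullness by Lemma \ref{mutation-fullness-lem}. The representative $(4)$ differs from $(13)_3$ only by interchanging $\mathcal{O}_X(3H-E)$ and $\mathcal{O}_X(4H-2E)$ in positions $3$ and $4$. Both $\mathcal{O}_X(\pm(H-E))$ are cohomologically zero by cases $(1)$ and $(2)$ of Proposition \ref{cohom-zero-lem-cubic}, so this pair is mutually orthogonal, the left mutation degenerates to a bare swap, and $(4)$ inherits fullness from $(13)_3$. Analogously, $(11)$ differs from $(14)_4$ only by interchanging $\mathcal{O}_X(2H)$ and $\mathcal{O}_X(9H-4E)$ in positions $5$ and $6$, and this pair is mutually orthogonal because $\mathcal{O}_X(7H-4E)$ and $\mathcal{O}_X(-7H+4E)$ are cohomologically zero (cases $(1)$ and $(7)$ of Proposition \ref{cohom-zero-lem-cubic}); thus $(11)$ is full.

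The main obstacle will be locating the right single-swap bridges. A priori there is no reason why any representative in the cycles $(1)$--$(6)$ or $(7)$--$(12)$ should differ from an Orlov collection by a single trivial swap, and pinning down the pairs $(13)_3\leftrightarrow(4)$ and $(14)_4\leftrightarrow(11)$ requires a careful examination of Table \ref{tab3:g} together with the list of cohomologically zero line bundles in Proposition \ref{cohom-zero-lem-cubic}. Should such a direct swap not be available for some orbit, Lemma \ref{usefull-fullness-lem} would provide a strictly more flexible substitute: it allows one to replace a single line bundle in an exceptional collection \emph{without} requiring mutual orthogonality, so longer chains of such single-object replacements could in principle connect any two full exceptional collections of line bundles of length $6$.
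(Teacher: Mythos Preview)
Your argument is correct and takes a cleaner, genuinely different route from the paper. The paper treats the three orbits separately: it handles the first cycle via Orlov's blow-up formula together with an ad hoc replacement argument in the spirit of Lemma~\ref{usefull-fullness-lem}; it handles the second cycle by a direct distinguished-triangle computation starting from type~$(6)$; and only then does it reach type~$(13)$ for a single value of $b$ and extend to all $b$ by an induction using Lemma~\ref{usefull-fullness-lem}. You instead read off \emph{every} $(13)_b$ at once from the projective bundle structure~\eqref{eq:projective bundle 3} (the paper remarks after its proof that this is possible but does not carry it out), and then connect each finite cycle to this family by a single trivial mutation: $(4)$ and $(13)_3$ differ by swapping the mutually orthogonal pair $\mathcal{O}_X(3H-E),\,\mathcal{O}_X(4H-2E)$, while $(11)$ and $(14)_4$ differ by swapping the mutually orthogonal pair $\mathcal{O}_X(2H),\,\mathcal{O}_X(9H-4E)$. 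Both swaps are justified by cases $(1),(2),(7)$ of Proposition~\ref{cohom-zero-lem-cubic} exactly as you say. Your approach is shorter and more uniform; the paper's approach, by contrast, establishes fullness of the two finite cycles independently of the projective bundle description, so it would survive in situations where no convenient orthogonal-swap bridge to an Orlov collection exists.
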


\begin{proof}
By Lemma \ref{normalization-lem} and Lemma \ref{mutation-fullness-lem},
it suffices to show the exceptional collection of type $(1)$, $(7)$ and $(13)$ in Theorem \ref{Classify-Blowup-twisted-cubic-P3} are full. Then the proof is divided into three parts.

\textbf{Part 1}: Type $(1)$, i.e. $\{\mathcal{O}_{X}, \mathcal{O}_{X}(H), \mathcal{O}_{X}(3H-E), 
\mathcal{O}_{X}(E), \mathcal{O}_{X}(2H), \mathcal{O}_{X}(3H) \}$ is a full exceptional collection.

Applying Orlov's blow-up formula (Theorem \ref{Orlov-blowup-formula}) to the blow-up $X$,
we obtain a semiorthogonal decomposition of $\mathrm{D}(X)$,
\begin{equation*}
\langle 
j_{\ast}\rho^{\ast}\mathrm{D}(C)\otimes \mathcal{O}_{E}(-1),
\mathcal{O}_{X}, \mathcal{O}_{X}(H), \mathcal{O}_{X}(2H), \mathcal{O}_{X}(3H)
\rangle.
\end{equation*}
Since $\mathcal{O}_{E}(E)\cong \mathcal{O}_{E}(-1,5)$,
this turns to be 
$$
\langle 
\mathcal{O}_{E}(-1,4), \mathcal{O}_{E}(-1,5),
\mathcal{O}_{X}, \mathcal{O}_{X}(H), \mathcal{O}_{X}(2H), \mathcal{O}_{X}(3H)
\rangle.
$$
From the short exact sequence,
$$
0\to \mathcal{O}_{X}(-E) \to \mathcal{O}_{X} \to \mathcal{O}_{E} \to 0,
$$
we tensor with $\mathcal{O}_{X}(E)$ to gain an exact sequence,
$$
0\to \mathcal{O}_{X}\to \mathcal{O}_{X}(E)\to \mathcal{O}_{E}(E)\to 0.
$$
Hence
$\mathcal{O}_{E}(-1,5)\cong \mathcal{O}_{E}(E)\in\langle 
\mathcal{O}_{X}, \mathcal{O}_{X}(H), \mathcal{O}_{X}(3H-E), 
\mathcal{O}_{X}(E), \mathcal{O}_{X}(2H), \mathcal{O}_{X}(3H) 
\rangle.
$
Since 
$$
\mathrm{Hom}(\mathcal{O}_{X}(3H-E), \mathcal{O}_{E}(-1,4)[k])
=H^{k}(\mathcal{O}_{E}(-2,0))
=
\left\{
 \begin{array}{ll}
  0, k\neq 0; \\
   \mathbb{C}, k=1;
 \end{array},
\right.
$$
hence the pair $(\mathcal{O}_{E}(-1,4), \mathcal{O}_{X}(3H-E))$ is not an exceptional pair,
and then the result follows from the same idea as the proof of Lemma \ref{usefull-fullness-lem}.

\;
\;
\;

\textbf{Part 2}: 
We start with the full exceptional collection of type $(6)$ in Theorem \ref{Classify-Blowup-twisted-cubic-P3},
$$
\mathrm{D}(X)
=\langle\mathcal{O}_{X}, \mathcal{O}_{X}(H-E), \mathcal{O}_{X}(2H-E), 
\mathcal{O}_{X}(4H-2E), \mathcal{O}_{X}(H), \mathcal{O}_{X}(3H-E)\rangle.
$$
Since $\mathcal{O}_{X}(H-E)$ is cohomologically zero,
mutating $\mathcal{O}_{X}(H-E)$ and $\mathcal{O}_{X}$ turns this into
\begin{equation}\label{completion-mutation-sod}
\mathrm{D}(X)
=\langle\mathcal{O}_{X}(H-E), \mathcal{O}_{X}, \mathcal{O}_{X}(2H-E), 
\mathcal{O}_{X}(4H-2E), \mathcal{O}_{X}(H), \mathcal{O}_{X}(3H-E)\rangle.
\end{equation}
Then, by Lemma \ref{mutation-fullness-lem}, we left mutate the last term to the front of the exceptional collection,
hence tensoring it with $\omega_X=\mathcal{O}_{X}(-4H+E)$ to obtain a semiorthogonal decomposition of  $\mathrm{D}(X)$,
$$
\langle\mathcal{O}_{X}(-H), \mathcal{O}_{X}(H-E), \mathcal{O}_{X}, \mathcal{O}_{X}(2H-E), 
\mathcal{O}_{X}(4H-2E), \mathcal{O}_{X}(H)\rangle.
$$

Next we  show that type $(7)$, i.e. $\{\mathcal{O}_{X}, \mathcal{O}_{X}(2H-E), \mathcal{O}_{X}(-3H+2E), 
 \mathcal{O}_{X}(4H-2E), \mathcal{O}_{X}(-H+E), \mathcal{O}_{X}(H) \}$ 
 is a full exceptional collection.
We assume 
\begin{equation}\label{assumption-cubic-part2}
A\in \langle \mathcal{O}_{X}, \mathcal{O}_{X}(2H-E), \mathcal{O}_{X}(-3H+2E), 
 \mathcal{O}_{X}(4H-2E), \mathcal{O}_{X}(-H+E), \mathcal{O}_{X}(H) \rangle^{\bot}
\end{equation}
and $A\neq 0$. 
Then $A\in \langle \mathcal{O}_{X}, \mathcal{O}_{X}(2H-E), 
\mathcal{O}_{X}(4H-2E), \mathcal{O}_{X}(H)\rangle^{\bot}$.
Since $\langle \mathcal{O}_{X}(-H), \mathcal{O}_{X}(H-E) \rangle 
\cong \langle \mathcal{O}_{X}, \mathcal{O}_{X}(2H-E), 
\mathcal{O}_{X}(4H-2E), \mathcal{O}_{X}(H) \rangle^{\bot}$,
thus $A\in \langle \mathcal{O}_{X}(-H), \mathcal{O}_{X}(H-E)\rangle$.
Then there is a distinguished triangle
\begin{equation}\label{cubic-exact-triangle}
A_2 \to A \to A_1 \to A_2[1],
\end{equation}
where $A_2 \in \langle \mathcal{O}_{X}(H-E)\rangle$ 
and $A_1 \in \langle  \mathcal{O}_{X}(-H)\rangle$.
Here we may assume $A_1=\bigoplus \mathcal{O}_{X}(-H)[i]^{\oplus j_i}$ 
and  $A_2=\bigoplus \mathcal{O}_{X}(H-E)[s]^{\oplus t_s}$.
By Proposition \ref{cohom-zero-lem-cubic},  
$\mathcal{O}_{X}(2H-2E)$ is not cohomologically zero, 
i.e., for some $k_0$, $H^{k_0}(\mathcal{O}_{X}(2H-2E))\neq0$.
Then we have following:
\begin{enumerate}
\item[(i)] If $A_2=0$ , then $A\cong A_1\in \langle  \mathcal{O}_{X}(-H) \rangle$ and
    $$
    \mathrm{Hom}(\mathcal{O}_{X}(-3H+2E), \mathcal{O}_{X}(-H)[k_0])=
    H^{k_0}(\mathcal{O}_{X}(2H-2E))\neq 0.
    $$
    This is contradicting to the assumption \eqref{assumption-cubic-part2}.
    
\item[(ii)] If $A_1=0$ , then $A\cong A_2\in \langle  \mathcal{O}_{X}(H-E) \rangle$, and
    $$
    \mathrm{Hom}(\mathcal{O}_{X}(-H+E), \mathcal{O}_{X}(H-E)[k_0])=
    H^{k_0}(\mathcal{O}_{X}(2H-2E))\neq 0.
    $$ 
    This is also contradicting to the assumption \eqref{assumption-cubic-part2}.

\item[(iii)]
Assume $A_1\neq 0$ and $A_2\neq0$, and 
pick up an object $B=\bigoplus \mathcal{O}_{X}(-H+E)[s]^{\oplus t_s}$.
Applying the functor $\mathrm{Hom}(B, -)$ to distinguished triangle \eqref{cubic-exact-triangle}, we gain the following long exact sequence 
$$
\cdots \to \mathrm{Hom}(B, A[k])  \to \mathrm{Hom}(B, A_1[k]) \to \mathrm{Hom}(B, A_2[k+1]) \to \mathrm{Hom}(B, A[k+1])\to \cdots .
$$
By assumption \eqref{assumption-cubic-part2}, 
this exact sequence implies that 
\begin{equation}\label{hom-exact-seq}
\mathrm{Hom}(B, A_1[k]) \cong \mathrm{Hom}(B, A_2[k+1])
\end{equation}
for any $k\in \mathbb{Z}$.
However,  
since  $\mathcal{O}_{X}(-E)$ is cohomologically zero,
for any $k\in \mathbb{Z}$, we have
$$
\mathrm{Hom}(B, A_1[k])
=\mathrm{Hom}(\bigoplus \mathcal{O}_{X}(-H+E)[s]^{\oplus t_s}, \bigoplus \mathcal{O}_{X}(-H)[i]^{\oplus j_i}[k])
=0,
$$ 
but
$$
\mathrm{Hom}(B, A_2[k_0-1])=
\mathrm{Hom}(\bigoplus \mathcal{O}_{X}(-H+E)[s]^{\oplus t_s}, \bigoplus \mathcal{O}_{X}(H-E)[s]^{\oplus t_s}[k_0])\neq 0
$$
gives a contradiction to \eqref{hom-exact-seq}.
\end{enumerate}

\;
\;
\;

\textbf{Part 3}: Type $(13)$, i.e. $\{\mathcal{O}_{X}, \mathcal{O}_{X}(2H-E), \mathcal{O}_{X}(4H-2E), 
\mathcal{O}_{X}((2b-3)H-(b-2)E), \mathcal{O}_{X}((2b-1)H-(b-1)E), 
 \mathcal{O}_{X}((2b+1)H-bE) \}$ is a full exceptional collection.

First, by Lemma \ref{mutation-fullness-lem}, we right mutate the first term to the end of the semiorthogonal decomposition \eqref{completion-mutation-sod},
hence tensoring it with $\omega_X^{\vee}=\mathcal{O}_{X}(4H-E)$ to obtain a new semiorthogonal decomposition of  $\mathrm{D}(X)$,
$$
\langle
 \mathcal{O}_{X}, \mathcal{O}_{X}(2H-E),  \mathcal{O}_{X}(4H-2E), \mathcal{O}_{X}(H), \mathcal{O}_{X}(3H-E), \mathcal{O}_{X}(5H-2E)\rangle.
$$
Therefore, for any given $a\in \mathbb{Z}$, 
inductively on $b\in \mathbb{Z}$, 
we may assume that for $b=k$ the exceptional collection
\begin{eqnarray}\label{main-thm-cubic-equ1}
&& \{\mathcal{O}_{X}, \mathcal{O}_{X}(2H-E), \mathcal{O}_{X}(4H-2E), 
      \mathcal{O}_{X}((2k-3)H-(k-2)E),  \nonumber\\
&& \;\;\;\; \mathcal{O}_{X}((2k-1)H-(k-1)E),  \mathcal{O}_{X}((2k+1)H-kE) \}
\end{eqnarray}
is full.
Then for $b=k-1$ we have the following exceptional collection
\begin{eqnarray}\label{main-thm-cubic-equ2}
&& \{\mathcal{O}_{X}, \mathcal{O}_{X}(2H-E), \mathcal{O}_{X}(4H-2E), 
      \mathcal{O}_{X}((2k-5)H-(k-3)E), \nonumber\\
&& \;\;\;\;    \mathcal{O}_{X}((2k-3)H-(k-2)E), \mathcal{O}_{X}((2k-1)H-(k-1)E) \},
\end{eqnarray}
and for $b=k+1$ we obtain the exceptional collection
\begin{eqnarray}\label{main-thm-cubic-equ3}
&& \{\mathcal{O}_{X}, \mathcal{O}_{X}(2H-E), \mathcal{O}_{X}(4H-2E), 
      \mathcal{O}_{X}((2k-1)H-(k-1)E), \nonumber\\
&& \;\;\;\;    \mathcal{O}_{X}((2k+1)H-kE), \mathcal{O}_{X}((2k+3)H-(k+1)E) \},
\end{eqnarray}
By comparing two exceptional collections \eqref{main-thm-cubic-equ2} and \eqref{main-thm-cubic-equ3} with \eqref{main-thm-cubic-equ1},
Lemma \ref{usefull-fullness-lem} implies that the exceptional collections \eqref{main-thm-cubic-equ2} and \eqref{main-thm-cubic-equ3} are full.
This completes the proof of Theorem \ref{main-thm-cubic}.
\end{proof}

\begin{rem}
It is possible to show the fullness of type $(13)$ by using the projective bundle structure (\ref{eq:projective bundle 3}) and projective bundle formula.
\end{rem}

\begin{rem}
Since the blow-up a point (or a line, a twisted cubic curve) of $\mathbb{P}^3$ is a Fano variety, theoretically one may use Bondal \cite[Theorem 4.1]{Bon90} to show the fullness of the exceptional collections in Theorem \ref{Classify-Blowup-p-P3}, Theorem \ref{Classify-Blowup-line-P3} and Theorem \ref{Classify-Blowup-twisted-cubic-P3}.
\end{rem}


\section{Final remarks}

In this section, we collect some interesting problems which are related to Kuznetsov's fullness conjecture and which we are interested in.

(1) 
Recall that all smooth toric Fano $3$-folds and $4$-folds have full exceptional collections of line bundles. 
Inspired by main theorem, one may hope the following to be true.

\begin{conj}
All exceptional collections of line bundles of maximal length on smooth toric Fano $3$-folds and $4$-folds are full.
\end{conj}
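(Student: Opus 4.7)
The plan is to lift the three-step program of Sections \ref{blow-up-one-point}--\ref{blow-up-one-cubic} to the full list of smooth toric Fano manifolds, exploiting the fact that this list is finite (18 in dimension 3 by Batyrev, 124 in dimension 4 by Batyrev--Sato), and that for each such $X$ the maximal length of an exceptional collection of line bundles equals the rank of $K_0(X)$, which in turn equals the number of torus-fixed points. For every $X$ on the list a full exceptional collection of line bundles of this maximal length is already known, so the content of the conjecture is that \emph{every} exceptional collection of line bundles of this length is full, as in our Theorems \ref{main-thm-point}, \ref{main-thm-line}, \ref{main-thm-cubic}.

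First I would systematize Step 1: classify all cohomologically zero line bundles on a smooth toric Fano $n$-fold. On a toric $X$, cohomology of $T$-equivariant line bundles has a purely combinatorial description (via the Demazure/Danilov picture of characters supported on relatively open polyhedral pieces of $M_{\mathbb{R}}$), and toric Serre duality together with an explicit Riemann--Roch on the lattice polytope reduces the problem to solving a Diophantine system analogous to Propositions \ref{Coh-zero-line-bundles-point}, \ref{Coh-zero-line-bundles-line}, \ref{cohom-zero-lem-cubic}. The combinatorial finiteness of the ``exceptional pair'' region in $\mathrm{Pic}(X)\otimes\mathbb{Q}$ can be read off Kleinschmidt--style fan decompositions. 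Second, imitate Step 2: combine the cohomologically zero locus with Lemma \ref{lem:exlb} and the normalization principle of Lemma \ref{normalization-lem} to enumerate exceptional collections of maximal length up to mutation by $\pm\omega_X$ (Lemma \ref{mutation-fullness-lem}). For each $X$ this produces a finite list of parametrized families, as in Theorems \ref{Classify-Blowup-p-P3}, \ref{Classify-Blowup-line-P3}, \ref{Classify-Blowup-twisted-cubic-P3}.

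For Step 3 (fullness) I would rely on three mechanisms already employed in this paper. (a) When $X$ is a projective bundle over a toric Fano of lower dimension (e.g.\ Kleinschmidt varieties, $\mathbb{P}^k$-bundles over toric Fano surfaces/threefolds), apply Orlov's projective bundle formula (Theorem \ref{Orlov-projbundle-formula}) to recover one family per fiber sequence. (b) When $X$ is a blow-up of a known toric Fano along a smooth toric center, apply Orlov's blow-up formula (Theorem \ref{Orlov-blowup-formula}); for center a point this is covered directly by the augmentation result (Theorem \ref{blowup-point-fullness-lemma}), and I expect an analogous higher-dimensional augmentation theorem along smooth toric subvarieties, obtained by iterating the same argument dimension-by-dimension along $\mathbb{P}(\mathcal{N}_{Z/Y})$. (c) The ``single object swap'' Lemma \ref{usefull-fullness-lem} transports fullness between families that differ in one slot, exactly as in Part 3 of the proof of Theorem \ref{main-thm-cubic}. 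A bookkeeping argument would verify that every family produced in Step 2 is linked, via swaps, mutations, and normalizations, to one in the image of (a) or (b).

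\textbf{Main obstacle.} The principal difficulty is the sheer scale and uniformity: carrying out the enumeration for 142 varieties with Picard rank up to 8 by hand is infeasible, so a uniform proof requires a general toric-combinatorial criterion for a line-bundle collection to be cohomologically orthogonal \emph{and} full. Concretely, the hardest step will be proving that the list of ``swap classes'' produced in Step 2 is exhausted by the collections furnished by (a) and (b); this is the analogue of the case-by-case verification for $(13)_b$--$(15)_b$ in the twisted cubic case, and in higher Picard rank there is no a priori guarantee that every normalized exceptional collection is reachable. Resolving this will likely require either a general augmentation theorem for blow-ups along smooth toric centers of arbitrary codimension (generalizing Theorem \ref{blowup-point-fullness-lemma}), or a Fano-specific fullness criterion refining Bondal's Theorem 4.1 in \cite{Bon90} to the line-bundle setting, possibly formulated in terms of the Euler pairing on $\mathrm{Pic}(X)$ and the polytope of $X$. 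A computer-assisted verification using toric databases (PALP, Magma) on the 142 cases would give strong evidence and likely uncover the right general statement before a uniform proof is attempted.
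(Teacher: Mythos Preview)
This statement is a \emph{conjecture} in the paper's final-remarks section, not a theorem: the paper offers no proof and explicitly poses it as an open problem motivated by the main results. There is therefore nothing to compare your attempt against.

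Your submission is correctly framed as a research \emph{proposal} rather than a proof, and the three-step program you outline (classify cohomologically zero line bundles via toric combinatorics; enumerate normalized exceptional collections up to mutation; link each family to a known full one via projective-bundle, blow-up, or swap arguments) is precisely the strategy the paper uses in the three worked cases. Two small remarks. First, Section~\ref{blow-up-one-cubic} treats a \emph{non-toric} Fano threefold (the paper says so explicitly), so only the point and line blow-ups actually fall under the conjecture; the twisted-cubic case is methodological evidence, not an instance. Second, your identification of the main obstacle is accurate: the paper's proofs of fullness already rely on ad hoc geometric input (the projective-bundle structures \eqref{eq:projective bundle 1}, \eqref{eq:projective bundle 2}, \eqref{eq:projective bundle 3}, the augmentation Theorem~\ref{blowup-point-fullness-lemma}, and the swap Lemma~\ref{usefull-fullness-lem}), and there is no uniform mechanism in the paper that would cover all $18+124$ varieties. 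A general augmentation theorem for blow-ups along toric centers of arbitrary codimension, as you suggest, is not proved here and would be genuinely new.
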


Of course, it is also very interesting to give an explicit classification of full exceptional collections of line bundles on smooth toric Fano $3$-folds and $4$-folds.

In dimensional $4$, there are some relatively easy examples of smooth toric Fano $4$-folds on which cohomologically zero line bundles are clear:
\begin{enumerate} 
\item[(i)] $\mathcal{O}_{\mathbb{P}^2\times \mathbb{P}^2}(a, b)$ is cohomologically zero if and only if $a=-1, -2$, or $b=-1, -2$;

\item[(ii)] $\mathcal{O}_{\mathbb{P}^3\times \mathbb{P}^1}(a, b)$ is cohomologically zero if and only if $a=-1, -2, -3$, or $b=-1$;

\item[(iii)] $\mathcal{O}_{\mathbb{P}^1\times \mathbb{P}^1 \times \mathbb{P}^1\times \mathbb{P}^1}(a, b, c)$ is cohomologically zero if and only if $a=-1$, or $b=-1$, or $c=-1$.
\end{enumerate} 
In general, the classification of cohomologically zero line bundles on smooth toric Fano 4-folds may be much more complicated.
For example, it is interesting to give the classification of cohomologically zero line bundles on the blow-up of $\mathbb{P}^{4}$ at a point or a line.

(2) 
The existence of (quasi-)phantom categories on smooth projective varieties is becoming very important.  There is an interesting conjecture on the existence of phantom categories on Barlow surfaces (see \cite[Conjecture 4.1]{DKK13} and \cite[Conjecture 4.9]{CKP13}), which is strongly connected to Kuznetsov's fullness conjecture. 

\begin{conj}\label{Barlow-conj}
Given any Barlow surface $X$, 
the derived category $\mathrm{D}(X)$ has an
exceptional collection of length $11$ with orthogonal complement a (non-trivial) phantom category,
but has no full exceptional collection.
\end{conj}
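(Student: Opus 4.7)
The plan is to attack the two assertions of Conjecture~\ref{Barlow-conj} in turn, exploiting the very rigid numerics of a Barlow surface $X$: one has $p_g = q = 0$, $K_X^2 = 1$, $c_2(X) = 11$, $\rho(X) = 9$, and Bloch's conjecture is known for $X$ (Voisin), so $\mathrm{K}_0(X) \cong \mathbb{Z}^{11}$. By additivity of $\mathrm{K}_0$ across any semiorthogonal decomposition, the length $l$ of an exceptional collection on $X$ therefore satisfies $l \leq 11$, with equality forcing the orthogonal complement to have trivial Grothendieck group.

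First I would construct (or invoke, following B\"ohning--Graf von Bothmer--Katzarkov--Sosna) an exceptional collection $\{L_1, \dots, L_{11}\}$ of line bundles on $X$, checking exceptionality via Lemma~\ref{lem:exlb} by exhibiting explicit divisor classes and vanishing of all $H^i(L_j \otimes L_k^{-1})$ for $j < k$. Setting $\mathcal{A}_X := \langle L_1, \dots, L_{11}\rangle^{\perp}$, I would then verify that $\mathcal{A}_X$ is a phantom in the sense of the introduction by checking two additive invariants: (a) $\mathrm{K}_0(\mathcal{A}_X) = 0$, which is forced by the rank count above; and (b) $\mathrm{HH}_\bullet(\mathcal{A}_X) = 0$, which follows from additivity of Hochschild homology together with Hochschild--Kostant--Rosenberg and the Hodge numbers $h^{p,q}(X) = 0$ for $p \neq q$, $h^{0,0} + h^{1,1} + h^{2,2} = 1 + 9 + 1 = 11$.

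The main obstacle, and the reason Conjecture~\ref{Barlow-conj} remains open, is establishing non-triviality of $\mathcal{A}_X$, which is equivalent to the second assertion that no full exceptional collection exists on $X$: any full collection would have length exactly $11$ by the rank argument, and would force $\mathcal{A}_X = 0$. The difficulty is structural: every invariant that is both additive over semiorthogonal decompositions and forced to vanish on $\mathcal{A}_X$ by the phantom property is, by construction, useless for distinguishing $\mathcal{A}_X$ from zero. My strategy would be to search for a non-additive or secondary witness, in decreasing order of ambition: (i) compute $\mathrm{HH}^\bullet(\mathcal{A}_X)$ and look for nontrivial classes (Hochschild \emph{cohomology} need not vanish on phantoms); (ii) analyze higher algebraic $\mathrm{K}$-theory $\mathrm{K}_i(\mathcal{A}_X)$ for $i \geq 1$ via the localization sequence of Keller--Schlichting, aiming to detect a nontrivial class descended from $\mathrm{K}_1(X)$ or from the torsion of $\mathrm{Pic}(X)$; (iii) exploit the dg enhancement and a secondary invariant in the spirit of Gorchinskiy--Orlov's construction distinguishing non-equivalent enhancements. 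Once any one of these invariants is shown to be non-vanishing, $\mathcal{A}_X \neq 0$ and the non-existence of a full exceptional collection follows immediately.

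The hard part is step (iii) of the strategy above, i.e.\ producing an invariant that genuinely detects the phantom; every known computation on candidate phantom categories of surfaces with $p_g = q = 0$ has, to date, been of additive type and therefore silent. Any concrete instance of a non-additive obstruction on $\mathcal{A}_X$ would not only settle Conjecture~\ref{Barlow-conj} but also yield the first counterexample to Kuznetsov's nonvanishing conjecture recalled in the introduction, which explains both the interest and the difficulty of the statement.
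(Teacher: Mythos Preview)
The statement is a \emph{conjecture}, presented in the paper's ``Final remarks'' section as an open problem; the paper offers no proof. The only commentary the paper provides is that Kuznetsov's fullness conjecture, if true, would imply Conjecture~\ref{Barlow-conj} for the determinantal Barlow surface $S$ studied by B\"ohning--Graf von Bothmer--Katzarkov--Sosna, since those authors have already constructed a length-$11$ exceptional collection on $S$ whose orthogonal complement is a \emph{non-trivial} phantom.

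Your proposal, read as a research strategy, contains a genuine logical gap. You assert that non-triviality of $\mathcal{A}_X$ (the orthogonal complement of one particular length-$11$ collection) is \emph{equivalent} to the non-existence of a full exceptional collection on $X$. This is false in the direction you need: $\mathcal{A}_X \neq 0$ only says that \emph{that particular} collection fails to be full; it does not preclude the existence of a \emph{different} full exceptional collection of length $11$. Bridging that gap is exactly the content of Kuznetsov's fullness conjecture, which is itself open---and this is precisely the point the paper makes in the paragraph following the conjecture. In addition, the non-triviality of $\mathcal{A}_X$ for the BGvBKS collection is already established in \cite{BGvBKS15}, so your strategies (i)--(iii) are aimed at re-proving a known fact while leaving the actual open part of the conjecture (non-existence of \emph{any} full collection, and the existence of such a collection on an \emph{arbitrary} Barlow surface) untouched. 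Finally, your closing sentence is inaccurate: counterexamples to Kuznetsov's nonvanishing conjecture already exist---this is what the references \cite{GO13, BGvBKS15} and others cited in the introduction provide.
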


We see that Kuznetsov's fullness conjecture implies Conjecture \ref{Barlow-conj} for some Barlow surfaces.
As a matter of fact, in \cite{BGvBKS15}, B\"{a}hning-Graf von Bothmer-Katzarkov-Sosna
have exhibited a determinantal Barlow surface $S$
with an exceptional collection of line bundles of length $11$ whose orthogonal complement is a (non-trivial) phantom category.
Certainly, this does not mean that there is no full exceptional collection on the Barlow surface $S$.
If $S$ admits a full exceptional collection then its length must be $11$,
and then Kuznetsov's fullness conjecture give a contradiction with B\"{a}hning-Graf von Bothmer-Katzarkov-Sosna's result.

(3) 
Since any smooth projective rational surface is a series of blow-up points of $\mathbb{P}^2$ or Hirzebruch surfaces, hence Orlov's blow-up formula implies that any smooth projective rational surface admits a full exceptional collection. Moreover, the augmentation implies that any smooth projective rational surface admits a full exceptional collection of line bundles. 
Conversely, there is an open problem: 

\begin{conj}[Orlov]\label{surf-rational-excep-collect}
Any smooth projective surface with a full exceptional collection (of line bundles) is rational.
\end{conj}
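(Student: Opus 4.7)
The plan is to combine additivity of Hochschild homology under semiorthogonal decompositions with the Enriques--Kodaira classification and Castelnuovo's rationality criterion, reducing the conjecture to ruling out a short list of non-rational minimal models and then confronting the genuinely hard simply connected cases.

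First, I would exploit additivity of Hochschild homology. If $\mathrm{D}(S) = \langle L_1,\ldots,L_n\rangle$ is a full exceptional collection of line bundles on a smooth projective surface $S$, then each component $\langle L_i\rangle$ is equivalent to $\mathrm{D}(\mathrm{Spec}\,\mathbb{C})$, so $HH_k(S) = 0$ for $k \neq 0$ and $HH_0(S) \cong \mathbb{C}^n$. By the Hochschild--Kostant--Rosenberg isomorphism $HH_k(S) \cong \bigoplus_{q-p=k} H^q(S,\Omega_S^p)$, the vanishing of $HH_{\pm 2}(S)$ and $HH_{\pm 1}(S)$ forces $p_g(S) = q(S) = 0$. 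By Castelnuovo's criterion, rationality of $S$ is equivalent to $q(S) = P_2(S) = 0$, so the problem reduces to proving $P_2(S) = 0$.

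Second, I would apply the Enriques--Kodaira classification. The minimal model $S_{\min}$ of $S$ inherits $p_g = q = 0$ and is either rational, Enriques, properly elliptic (Dolgachev type), or of general type (Godeaux, Campedelli, Burniat, Barlow, \ldots). Since blow-ups only add free $\mathbb{Z}$-summands to $\mathrm{Pic}$, the torsion subgroups of $\mathrm{Pic}(S)$ and $\mathrm{Pic}(S_{\min})$ coincide. If $S_{\min}$ is Enriques then $K_{S_{\min}}$ is $2$-torsion; if $S_{\min}$ has nontrivial $\pi_1$ (as for classical Godeaux, classical Burniat, and many of their relatives) then $\mathrm{Pic}(S_{\min}) \cong H^2(S_{\min},\mathbb{Z})$ carries torsion. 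In either case $\mathrm{Pic}(S)$ has nontrivial torsion, which lifts to $K_0(S)$ through the determinant map and contradicts $K_0(S) \cong \mathbb{Z}^n$ forced by a full exceptional collection. This leaves only the simply connected properly elliptic and general-type minimal surfaces with $p_g = q = 0$.

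The third and decisive step is to rule out simply connected minimal surfaces with $\kappa \in \{1,2\}$ and $p_g = q = 0$ --- the Dolgachev surfaces and the simply connected Barlow-type surfaces --- and this is where the main obstacle lies. For these surfaces every invariant of $\mathrm{D}(S)$ currently known to be additive under semiorthogonal decompositions (Hochschild homology, Hochschild cohomology, $K_0$, topological $K$-theory) agrees with that of a rational surface of the same Picard rank; moreover \cite{BGvBKS15} already exhibits exceptional collections of line bundles of maximal length $11$ on determinantal Barlow surfaces whose orthogonal complement is a nonzero phantom category, so the obstruction cannot be detected by any of these classical invariants. Excluding fullness therefore requires a new triangulated invariant sensitive to $P_2(S)$, or equivalently to the pluricanonical algebra, that remains additive under semiorthogonal decompositions. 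A plausible direction is to couple Bloch's conjecture $CH_0(S) = \mathbb{Z}$ (known for many of the target surfaces) with a noncommutative-motivic refinement, aiming to extract an obstruction from the class of $2K_S$ in an enriched $K$-theory or in the motive of $S$; producing such an invariant --- one that sees plurigenera yet is compatible with semiorthogonal decompositions --- is, in my view, the crux of the conjecture and the only step where routine techniques will not suffice.
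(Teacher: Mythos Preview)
The statement you are addressing is a \emph{conjecture}, not a theorem: the paper lists it in the final remarks as an open problem and explicitly says it ``is still widely open in general'' (only the \emph{strong} exceptional collection case is known, by Brown--Shipman). There is therefore no proof in the paper to compare your proposal against.

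Your outline is an accurate summary of the standard reductions: additivity of Hochschild homology plus HKR gives $p_g=q=0$; freeness of $K_0$ kills the minimal models with torsion in $\mathrm{Pic}$; and what remains are the simply connected $p_g=q=0$ surfaces with $\kappa\ge 1$ (Dolgachev, Barlow). But your third step is not a proof step at all---you yourself say that ruling out these cases ``requires a new triangulated invariant'' and that ``routine techniques will not suffice.'' That is exactly the point: the conjecture is open precisely because no known additive invariant of $\mathrm{D}(S)$ distinguishes a Barlow or Dolgachev surface from a rational one, and the existence of length-$11$ exceptional collections with phantom complement on determinantal Barlow surfaces (B\"ohning--Graf von Bothmer--Katzarkov--Sosna) shows the obstruction is genuinely subtle. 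So your proposal is a correct diagnosis of where the difficulty lies, but it is not a proof, and neither the paper nor the current literature supplies one.
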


As a byproduct, Conjecture \ref{surf-rational-excep-collect} implies that any smooth projective surface with a full exceptional collection of length $4$ must be a Hirzebruch surface.
Recently, in \cite[Theorem 4.3]{BS15}, Brown-Shipman show that Conjecture \ref{surf-rational-excep-collect} holds for a smooth projective surface which admits a full strong exceptional collection of line bundles.
However, it is still widely open in general.
Also, there is a high dimensional analogous to Conjecture \ref{surf-rational-excep-collect} (see \cite[Conjecture 1.2]{EL15}).


\end{document}